\documentclass[11pt]{amsart}

\date{version of \today}
\usepackage{amsmath}
\usepackage{amssymb}
\usepackage{amsxtra}
\usepackage{abstract}
\usepackage{mathrsfs}
\usepackage{mathtools}
\usepackage[all]{xy}
\usepackage{amscd}
\usepackage{amsthm}
\usepackage[dvips]{graphicx}
\usepackage{ulem}
\usepackage{color}
\usepackage{appendix}
\usepackage{tikz}
\usepackage{tikz-cd}
\usepackage{url}

\newtheorem{Thm}{Theorem}[section]
\newtheorem{Lem}[Thm]{Lemma}
\newtheorem{Def}[Thm]{Definition}
\newtheorem{Cor}[Thm]{Corollary}
\newtheorem{Prop}[Thm]{Proposition}
\newtheorem{Ex1}[Thm]{Example}
\newtheorem{Rem1}[Thm]{Remark}
\newtheorem{Conj}[Thm]{Conjecture}
\newtheorem{Prob}[Thm]{Problem}

\newenvironment{Rem}{\begin{Rem1}\rm}{\end{Rem1}}
\newenvironment{Ex}{\begin{Ex1}\rm}{\end{Ex1}}

\usepackage{bbm}
\usepackage{enumitem}

\tikzset{every picture/.style={line width=0.75pt}} 

\newcommand{\rad}{\mathop{\mathrm{rad}}}
\newcommand{\soc}{\mathop{\mathrm{soc}}}

\newenvironment{theorem}{\begin{Thm}}{\end{Thm}}

\newenvironment{definition}{\begin{Def}}{\end{Def}}

\newenvironment{proposition}{\begin{Prop}}{\end{Prop}}
\newenvironment{example}{\begin{Ex1}}{\end{Ex1}}

\usepackage[bookmarks=true, colorlinks=true, citecolor=blue, linkcolor=black]{hyperref}

\newcommand{\qa}{kQ/I}

\newcommand{\m}{m}

\newcommand{\lra}{\longrightarrow}

\newcommand{\ra}{\rightarrow}
\newcommand{\sdp}{\times\kern-.2em\vrule height1.1ex depth-.05ex}
\newcommand{\epi}{\lra \kern-.8em\ra}

 \normalbaselines
\begin{document}

	\title{Classification of Brauer graph algebras under stable equivalence of Morita type}
	\author{Nengqun Li$^a$, Pengyun Chen$^b$, Yuming Liu$^{b,*}$, and Bohan Xing$^b$}
	\maketitle
	
	\renewcommand{\thefootnote}{\alph{footnote}}
	\setcounter{footnote}{-1} \footnote{\it{Mathematics Subject
			Classification(2020)}: 16G10, 16D50.}
	\renewcommand{\thefootnote}{\alph{footnote}}
	\setcounter{footnote}{-1} \footnote{\it{Keywords}: Brauer graph algebra, Cartan matrix, Simple-image of Morita type, Stable center, Stable equivalence of Morita type.}
	\setcounter{footnote}{-1} \footnote{$^a$Nengqun Li, School of Mathematics, Liaoning Normal University,
		Dalian 116029,  P. R. China.}
    \setcounter{footnote}{-1} \footnote{$^b$Pengyun Chen, Yuming Liu, and Bohan Xing, School of Mathematical Sciences, Laboratory of Mathematics and Complex Systems, Beijing Normal University,
		Beijing 100875,  P. R. China.}
	\setcounter{footnote}{-1} \footnote{E-mail addresses: pychen@mail.bnu.edu.cn (P. Chen); linengqun@lnnu.edu.cn (N. Li); ymliu@bnu.edu.cn (Y. Liu); bhxing@mail.bnu.edu.cn (B. Xing).}
	\setcounter{footnote}{-1} \footnote{$^*$Corresponding author.}
	
	{\noindent\small{\bf Abstract:} The classification of Brauer graph algebras under derived equivalence was recently given by Opper and Zvonareva. In this paper, we prove that two Brauer graph algebras are derived equivalent if and only if they are stably equivalent of Morita type. As an application, we show that every stable Picard group orbit of simple-images of Morita type contains a liftable representative. Moreover, we give a new proof of the fact that Brauer graph algebras are closed up to semisimple summands under stable equivalence of Morita type.}

\section{Introduction}
	
Brauer graph algebras form one of the most important classes of tame symmetric algebras and have attracted increasing attention since their introduction in \cite{DF}. On the one hand, Brauer graph algebras are generalization of Brauer tree algebras, the latter ones describe all group algebras of finite representation type up to Morita equivalence. This connection between Brauer graph algebras and modular group representation theory has remained a subject of continuous interest (see for example \cite{DF,DR}). On the other hand,  Brauer graph algebras coincide with symmetric special biserial algebras (\cite{Ro,Sch}) and the indecomposable modules over a Brauer graph algebra can be classified in terms of string and band modules. From this connection the representation types and the Auslander-Reiten quivers of Brauer graph algebras have been extensively studied (\cite{ES,BS,D}). For other aspects of Brauer graph algebras, such as their connections with gentle algebras and the surface cluster theory, we refer the reader to the survey article \cite{S}.

A direction of concern in recent years on study Brauer graph algebras is to describe algebras which are stably equivalent or derived equivalent to Brauer graph algebras. In this direction, Antipov and Zvonareva \cite{AZ2} proved that Brauer graph algebras are closed under derived equivalence, and Opper and Zvonareva \cite{OZ} gave a complete classification of Brauer graph algebras up to derived equivalence. However, for stable equivalences of Brauer graph algebras, much less is known. Antipov and Zvonareva \cite{AZ1} proved that a Brauer graph algebra can be stably equivalent to a stably biserial algebra only (here we assume that the algebras considered have no summand isomorphic to a Nakayama algebra of radical square zero) and that the Auslander-Reiten conjecture is true for algebras stably equivalent to Brauer graph algebras. Note that in characteristic different from 2, Brauer graph algebras coincide with symmetric stably biserial algebras, but in general there do exist symmetric stably biserial algebras which are not isomorphic to Brauer graph algebras.

The question of whether Brauer graph algebras are closed under stable equivalence remains open. The authors of the present paper gave a partial answer to this question in \cite{CLLX}. In particular, we proved that Brauer graph algebras are closed up to semisimple summands under stable equivalence of Morita type (\cite[Theorem 4.4]{CLLX}). In the present paper, we classify Brauer graph algebras under stable equivalence of Morita type. Our main result is as follows.

\begin{Thm}\textnormal{(see Theorem~\ref{cor-stm-der-BGA})}\label{main-result}
Let $A$ and $A'$ be Brauer graph algebras over an algebraically closed field $k$, associated with Brauer graphs $\Gamma$ and $\Gamma'$ respectively. Then $A$ and $A'$ are stably equivalent of Morita type if and only if they are derived equivalent. In particular, if $A$ and $A'$ are stably equivalent of Morita type such that neither $A$ nor $A'$ is local, then we have the following statements:
	\begin{enumerate}
		\item $\Gamma$ and $\Gamma'$ have same number of vertices, edges and faces;
		\item $\Gamma$ and $\Gamma'$ have same multi-sets of perimeters of faces;
		\item $\Gamma$ and $\Gamma'$ have same multi-sets of multiplicities of vertices;
		\item either both or none of $\Gamma$ and $\Gamma'$ are bipartite.
	\end{enumerate}
\end{Thm}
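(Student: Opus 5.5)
The plan is to prove the two directions separately. The direction ``derived equivalent $\Rightarrow$ stably equivalent of Morita type'' is standard. Brauer graph algebras are self-injective; in fact they are symmetric. By Rickard's theorem, a derived equivalence between self-injective algebras induces a stable equivalence of Morita type. This is the result of Rickard together with Keller--Vossieck, applied to a two-sided tilting complex. So the real work is the converse.

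For the converse, I would use the Opper--Zvonareva classification. Two Brauer graph algebras with non-local underlying algebras are derived equivalent if and only if conditions (1)--(4) hold for their Brauer graphs. So it suffices to show that a stable equivalence of Morita type forces (1)--(4). The local case, where $\Gamma$ has a single edge, I would treat directly. A stable equivalence of Morita type preserves the number of non-projective simple modules up to semisimple summands, and for indecomposable algebras it preserves being local. I expect this to reduce to comparing the dimensions of the stable center and the algebras, which are then isomorphic precisely when they are derived equivalent.

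In the non-local case, I would extract each invariant from something known to be preserved under stable equivalence of Morita type.
\begin{enumerate}
\item The number of edges equals the number of simples. This is preserved, since for symmetric algebras without semisimple or nodal summands the number of non-projective simples is invariant (Auslander--Reiten conjecture for this class, via \cite{AZ1} or \cite{CLLX}).
\item The number of vertices and faces, and the multisets of face perimeters, should be read off from the stable Auslander--Reiten quiver. Faces correspond to exceptional tubes or $\mathbb{Z}A_\infty$ components at the mouth, via the Green walks. Their ranks or periods give perimeters. Since a stable equivalence of Morita type preserves the stable AR-quiver, this gives (2). Combining this with the Euler characteristic count, once the number of vertices is known, gives (1).
\item The multiplicities of vertices appear through the Cartan matrix. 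Its determinant and elementary divisors are invariant under stable equivalence of Morita type (Xi's result that the absolute value of the Cartan determinant is invariant, refined to Smith normal form). The stable center and the Tate--Hochschild cohomology in degree $0$ encode the multiset of multiplicities together with the numbers of vertices.
\item Bipartiteness should be detected by the Cartan determinant being nonzero versus zero modulo characteristic considerations, or more robustly by the elementary divisors.
\end{enumerate}

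I expect the main obstacle to be (3) and (4). Recovering the full multiset of multiplicities, not merely its product or sum, requires a fine invariant. I would first try computing the stable center $\underline{Z}(A)$ explicitly for a Brauer graph algebra, as a sum over vertices of contributions depending on the multiplicity $m(v)$ and valency. I would then combine this with the dimension of the center, which is preserved up to the projective center. Separating contributions vertex by vertex, and handling characteristic $2$ and bipartiteness via the Cartan matrix's $2$-rank, is where I expect the real difficulty.
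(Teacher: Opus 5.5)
\textbf{There is a genuine gap: the step that carries the theorem, (3), is left open.} Your outer frame matches the paper's. You use Rickard for ``derived $\Rightarrow$ stable of Morita type'', and for the converse you reduce via Theorem~\ref{thm:derived-eqv-between-BGAs} to invariants (1)--(4). But (1), (2) and (4) are already known from Antipov together with Antipov--Zvonareva (Theorem~\ref{thm:ant's-stable-invariants}). So the real content of the theorem is (3), and your proposal defers it as ``the main obstacle''.

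\emph{The Cartan matrix cannot settle (3).} For a path with two edges and multiplicities $a,b,c$ ($b$ at the middle vertex), $C(A)=\begin{pmatrix}a+b&b\\ b&b+c\end{pmatrix}$ has Smith form $\mathrm{diag}(g,(ab+bc+ca)/g)$ with $g=\gcd(a,b,c)$. The multisets $\{1,2,3\}$ and $\{1,1,5\}$ both give $\mathrm{diag}(1,11)$, and they also agree on (1), (2) and (4).

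\emph{Characteristic $0$.} Your idea of combining $Z^{\mathrm{st}}(A)$ with $\dim_k Z(A)$ is what the paper does here, but the computation that makes it work is missing.
\begin{itemize}
\item $Z^{\mathrm{pr}}(A)$ is the Higman ideal, i.e.\ the image of the Cartan matrix acting on $\soc(A)$.
\item Since $C(A)=D'\,\mathrm{diag}(m_1,\dots,m_r)\,D$ and $\operatorname{rank}C(A)=\operatorname{rank}D$ over $\mathbb{Q}$, one gets $Z^{\mathrm{pr}}(A)=\operatorname{span}\{p_{1,1}^{m_1},\dots,p_{r,1}^{m_r}\}$ (Lemmas~\ref{lem-Cartan-matrix} and~\ref{lem-proj-center}).
\item Hence each $\overline{p_{i,m_i-1}}$ lies in $\soc(Z^{\mathrm{st}}(A))$, and $Z^{\mathrm{st}}(A)/\soc(Z^{\mathrm{st}}(A))\cong k[X_1,\dots,X_{r'}]/(X_i^{m_i-1},X_iX_j)$ (Proposition~\ref{stable center of BGA}). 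This recovers the multiplicities $\geq 3$.
\item The counts of multiplicities $1$ and $2$ then follow from $r$ and from $\sum_i m_i$. The latter is read off from $\dim_k Z(A)=1+\sum_i(m_i-1)+l+n$, which is invariant by Antipov--Zvonareva and Liu--Zhou--Zimmermann.
\end{itemize}

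\emph{Positive characteristic.} Here this computation breaks, because $C(A)$ can lose rank modulo $p$. In characteristic $2$ the path with multiplicities $(2,2,2)$ has $C(A)\equiv 0$. So $Z^{\mathrm{st}}(A)=Z(A)$ and $Z^{\mathrm{st}}(A)/\soc(Z^{\mathrm{st}}(A))\cong k[X_1,X_2,X_3]/(X_1,X_2,X_3)^2$, which is exactly what the characteristic-$0$ formula assigns to $(3,3,3)$. The paper therefore uses a different invariant: Zhou--Zimmermann's $Z(A)/R(A)$ (Proposition~\ref{prop:ZZ}), computed in Corollary~\ref{cor-center-quotient} as $k[X_1,\dots,X_{r'},Y_1,\dots,Y_l]/J$. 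It yields the number of vertices of each multiplicity $i>2$, and $r_2+l$ with $l$ the number of faces of perimeter $1$. The values of $l$ and $r$ then come from Theorem~\ref{thm:ant's-stable-invariants}. Nothing in your plan plays this role.

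\textbf{The local case does not work as written.} Dimensions of $A$ and of $Z^{\mathrm{st}}(A)$ do not separate, e.g., the edge with multiplicities $(1,3)$, giving $k[x]/(x^4)$, from the edge with multiplicities $(2,2)$, giving $k[x,y]/(xy,x^2-y^2)$. Both are $4$-dimensional with stable centers of equal dimension ($3$, or $4$ in characteristic $2$). Yet they are non-isomorphic local algebras, hence not derived equivalent. The paper handles this as follows:
\begin{itemize}
\item It isolates the two exceptional graphs (a loop of multiplicity $1$, an edge with multiplicities $(2,2)$) by domesticity.
\item In characteristic $2$ it separates these two by the $\Omega$-action on rank-one tubes (Lemma~\ref{lem-not-st-eq}).
\item It treats all other local graphs with the same invariants as in the non-local case.
\end{itemize}

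\textbf{Your derivations of (1) and (4) also have holes.}
\begin{itemize}
\item Faces of perimeter $1$ and $2$ do not give tubes of rank $\geq 2$ (Proposition~\ref{prop:stbA-tube-and-face}). So the stable AR-quiver only yields the perimeters $>2$ and the combination $l_1+2l_2$.
\item $V-E+F$ involves the unknown genus, so $V$ and $E$ do not determine $F$. You also never say how $V$ is obtained.
\item $\operatorname{rank}_{\mathbb{Q}}C(A)$ is $V$ or $V-1$ according as $\Gamma$ is non-bipartite or bipartite. So $\det C(A)\neq 0$ only says that $\Gamma$ is a tree or a non-bipartite graph with $V=E$; it does not detect bipartiteness in general.
\end{itemize}
These invariants should simply be quoted from Antipov's work, as the paper does.
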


Note that the invariants in $(1)$-$(4)$ form a complete derived invariants for Brauer graph algebras (see \cite{OZ}) apart from some exceptional cases. Our strategy is to show that each of them is preserved under stable equivalences of Morita type apart from some exceptional cases. As part of this analysis, over a field of characteristic zero, we obtain explicit descriptions of the projective and stable centers of Brauer graph algebras; see Lemma~\ref{lem-proj-center} and Proposition~\ref{stable center of BGA}.

Beyond the classification problem, it is natural to ask whether a given stable equivalence of Morita type can be lifted to a derived equivalence. Theorem~\ref{main-result} does not mean that a stable equivalence of Morita type between two
Brauer graph algebras can be lifted to a derived equivalence. In fact, \cite[Example~6.4]{LL} provides
a stable auto-equivalence of Morita type over some Brauer graph algebra that is not lifted to
a derived equivalence. Following \cite{CKL}, the liftability question can be studied through simple-images of Morita type. In \cite{CKL}, it is asked whether every simple-image of Morita type over a self-injective algebra is liftable? It is only known that this question has a positive answer for the class of representation-finite self-injective algebras. This motivates the following weaker lifting problem, where $\operatorname{StPic}(A)$ denotes the stable Picard group of $A$.

\begin{Prob}\label{prob:liftable-orbits}
Let $A$ be a self-injective algebra. Does every $\operatorname{StPic}(A)$-orbit of simple-images of Morita type contain a liftable representative?
\end{Prob}

In general, this problem has a negative answer (see \cite[Remark~2.10(3)]{CKL}). Our main theorem shows that Problem \ref{prob:liftable-orbits} has a positive
answer for the class of Brauer graph algebras.

\begin{Prop}\textnormal{(see Proposition~\ref{prop:orbit-contains-liftable} and \ref{prop:classification-of-orbits})}
Let $A=A_{\Gamma}$ be a Brauer graph algebra.
\begin{enumerate}
\item Every $\operatorname{StPic}(A)$-orbit of simple-images of Morita type contains a liftable representative.
\item If $\Gamma$ is neither a loop whose unique vertex has multiplicity one nor an edge whose two vertices both have multiplicity two, then these orbits are in bijection with the isomorphism classes of Brauer graphs $\Gamma'$ such that $A_{\Gamma'}$ is derived equivalent to $A_{\Gamma}$.
\end{enumerate}
\end{Prop}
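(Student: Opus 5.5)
We need to recall the definitions from \cite{CKL}. A simple-image of Morita type over $A$ is a set of $A$-modules of the form $\{\phi(S)\mid S \text{ simple } A'\text{-module}\}$, where $A'$ is a self-injective algebra and $\phi\colon \underline{\mathrm{mod}}\,A'\to\underline{\mathrm{mod}}\,A$ is a stable equivalence of Morita type. It is called liftable if $\phi$ can be chosen to lift to a derived equivalence. The stable Picard group $\operatorname{StPic}(A)$ acts on simple-images by composition.

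For (1), fix a simple-image $\mathcal{X}=\phi(\{S'\})$ with $\phi\colon \underline{\mathrm{mod}}\,A'\to \underline{\mathrm{mod}}\,A$ of Morita type. Up to Morita equivalence we may take $A'$ basic, and we may assume $A'$ has no semisimple summand, since $A$ has none and semisimple summands are stably invisible.

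By \cite[Theorem 4.4]{CLLX}, $A'$ is then a Brauer graph algebra $A_{\Gamma'}$. By Theorem~\ref{main-result}, $A'$ is derived equivalent to $A$. By Rickard, a derived equivalence between self-injective algebras induces a stable equivalence of Morita type $\psi\colon\underline{\mathrm{mod}}\,A'\to\underline{\mathrm{mod}}\,A$, and $\psi$ is liftable by construction. Then $\alpha=\phi\circ\psi^{-1}$ is a stable auto-equivalence of Morita type of $A$, so $\alpha\in\operatorname{StPic}(A)$, and
\[
\alpha\bigl(\psi(\{S'\})\bigr)=\mathcal{X}.
\]
Hence the orbit of $\mathcal{X}$ contains the liftable simple-image $\psi(\{S'\})$. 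The only points needing care are:
\begin{itemize}
\item handling the reduction to basic algebras without semisimple summands;
\item checking that composing with $\psi^{-1}$ stays within "of Morita type", which holds because bimodule tensor products of such equivalences are again of Morita type up to projective bimodule summands.
\end{itemize}

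For (2), define a map from orbits to isomorphism classes of Brauer graphs. The orbit of $\mathcal{X}=\phi(\{S'\})$ goes to the class of $\Gamma'$, where $A'\cong A_{\Gamma'}$ as above.

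To see this is well defined, recall the standard fact from \cite{CKL} that the endomorphism algebra structure of a simple-image recovers $A'$: $A'$ is determined up to isomorphism by $\mathcal{X}$, for example as the basic algebra whose stable category realises $\mathcal{X}$ as a simple-minded system. Two $\operatorname{StPic}(A)$-translates come from $A'$-equivalences differing by an auto-equivalence, so they give the same $A'$.

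It remains to recover $\Gamma'$ from $A'$. A Brauer graph algebra determines its Brauer graph up to isomorphism, except in the degenerate cases excluded in the hypothesis. Those cases are a loop at a vertex of multiplicity one and an edge with both multiplicities two; there, different graphs, or identifications of multiplicity data, give isomorphic local or small algebras. This exclusion is precisely why they are removed.

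Surjectivity then follows from the construction in (1): for each $\Gamma'$ with $A_{\Gamma'}$ derived equivalent to $A$, the Rickard stable equivalence $\psi$ gives a simple-image mapping to $\Gamma'$.

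For injectivity, if two simple-images $\phi_1(\{S'_1\})$ and $\phi_2(\{S'_2\})$ give isomorphic graphs, choose an isomorphism $\sigma\colon A'_1\cong A'_2$. Then $\phi_2\circ\sigma_*\circ\phi_1^{-1}\in\operatorname{StPic}(A)$ carries the first set to the second, since $\sigma_*$ permutes simples.

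The main obstacle I expect is the well-definedness step: showing the set $\mathcal{X}$ alone determines $A'$ up to isomorphism, rather than just up to stable equivalence. I would try to use the result of \cite{CKL} that simple-images of Morita type correspond to pairs $(A',\phi)$ modulo $\operatorname{Aut}$, and read $A'$ off as the stable endomorphism/Ext-structure of $\mathcal{X}$. Failing that, I would invoke \cite{CKL}'s description of orbits directly.
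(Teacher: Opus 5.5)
\textbf{Part (1).} Your argument is the paper's. It is the ``if'' direction of Lemma~\ref{lem:an-equivalent-characterization}(2) combined with Corollary~\ref{cor:stm-implies-derived-equ}: compose the given $\phi$ with the inverse of the stable equivalence induced by a standard derived equivalence.

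\textbf{Part (2).} The step you flag as the main obstacle is a genuine gap as written. To define the map sending the orbit of $\mathcal X$ to the class of $\Gamma'$, you need the following: if $\mathcal X=\phi_1(\mathcal S_{A'_1})=\phi_2(\mathcal S_{A'_2})$ with $A'_1,A'_2$ basic and without semisimple summands, then $A'_1\cong A'_2$.

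Reading $A'$ off from the stable Hom/Ext data of $\mathcal X$ does not give this. The stable Hom-spaces between members of $\mathcal X$ are just $\delta_{ij}k$. The graded Ext-algebra of the simples does not determine the algebra either: already for $k[x]/(x^n)$ the Yoneda algebra of the simple module is the same for all $n\geq 3$, and one would need the full $A_\infty$-structure.

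The missing ingredient is Linckelmann's theorem \cite[Theorem 2.1]{Linckelmann1996}. The composite $\phi_2^{-1}\phi_1$ is a stable equivalence of Morita type between indecomposable non-simple self-injective algebras, and it sends simples to simples. Hence it is induced by a Morita equivalence, so $A'_1\cong A'_2$.

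This is exactly how the proof of Lemma~\ref{lem:an-equivalent-characterization} argues. It is also the content of \cite[Proposition 2.7]{CKL}, which the paper cites directly to identify $\operatorname{StPic}(A)$-orbits with Morita classes of algebras without semisimple summands that are stably equivalent of Morita type to $A$. So your fallback of invoking \cite{CKL} is the paper's route. Once it is in place, your surjectivity and injectivity arguments are fine.

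One more detail to add. Proposition~\ref{prop-uniqueness-of-BG} is applied to the graphs $\Gamma'$, whereas the hypothesis only rules out $\Gamma$ being one of the two exceptional graphs. This is harmless. An exceptional $\Gamma'$ gives a local domestic algebra $A_{\Gamma'}$. Then $A_\Gamma$ would be local, since derived equivalence preserves the number of simples. By the argument of Case~1.1 in the proof of Theorem~\ref{cor-stm-der-BGA}, $\Gamma$ would then be exceptional as well, a contradiction.
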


Finally, we mention that using some more subtle invariants, we give a new proof in Appendix \ref{sec-appendix} for the fact that Brauer graph algebras are closed up to semisimple summands under stable equivalences of Morita type.

The paper is organized as follows. In Section \ref{sec:prelim} we recall Brauer graph algebras and some invariants under stable equivalence of Morita type. In Section \ref{sec:classification-BGA-stM} we establish the invariance of multi-sets of multiplicities of vertices of Brauer graphs under stable equivalence of Morita type by using some invariants. Based on this, we classify Brauer graph algebras under stable equivalence of Morita type. In Section \ref{sec:rmk} we study simple-images of Morita type over Brauer graph algebras. In Appendix \ref{sec-appendix} we give another proof of the result that Brauer graph algebras are closed up to semisimple summands under stable equivalence of Morita type.

\section*{Data availability} The datasets generated during the current study are available from the corresponding author on reasonable request.

\section{Preliminaries}\label{sec:prelim}

	In this paper we fix $k$ to be an algebraically closed field. All algebras are finite-dimensional $k$-algebras. Unless explicitly stated otherwise, all modules considered in this paper are finitely generated left modules. For an algebra $A$ we denote by $\mathrm{rad}(A)$ and $\mathrm{soc}(A)$ the radical and the socle of $A$ respectively. Furthermore, let $A=\qa$ be a bounded quiver algebra, which means $Q$ is a finite quiver and $I$ is an admissible ideal in $kQ$.  We denote by $s(p)$ the source vertex of a path $p$ and by $t(p)$ its terminus vertex. We will write paths from right to left, for example, $p=\alpha_{n}\alpha_{n-1}\cdots\alpha_{1}$ is a path with starting arrow $\alpha_{1}$ and ending arrow $\alpha_{n}$. A path is called a cycle if $s(p)=t(p)$.
	By abuse of notation we sometimes view an element in $kQ$ as an element in $\qa$ if no confusion can arise.

	Recall that for a finite-dimensional algebra $A$,
	\begin{itemize}
		\item the derived category $\mathcal{D}(A)$ is the localization of homotopy category $\mathcal{K}(A)$ by inverting quasi-isomorphisms;
		\item the (left) stable category $ A\text{-}\underline{\mathrm{mod}}$ is the quotient of (left) module category $ A\text{-}{\mathrm{mod}}$ modulo the ideal of maps that factor through projective modules.
	\end{itemize}
	We say that two algebras are derived equivalent (resp. stably equivalent) if their derived (resp. stable) categories are equivalent.

	\subsection{Brauer graph algebras}
	\

	\subsubsection{Basic definitions}\label{subsec:def-BGA}
	\
	
	In this section, we recall the basic knowledge about Brauer graph algebras. For the convenience of the following construction, we use ribbon graphs to define these algebras.
	
	\begin{Def}\textnormal{(\cite[Definition 1.1]{OZ})}\label{ribbon-graph}
		A ribbon graph is a tuple $\Gamma=(V,H,r,\iota,\rho)$, where
		\begin{enumerate}
			\item $V$ is a finite set whose elements are called vertices;
			
			\item $H$ is a finite set whose elements are called half-edges;
			
			\item $r: H\rightarrow V$ is a function;
			
			\item $\iota: H\rightarrow H$ is an involution without fixed points;
			
			\item $\rho: H\rightarrow H$ is a permutation whose cycles correspond to the sets $H_v:=r^{-1}(v)$, $v\in V$.
			
		\end{enumerate}
	\end{Def}
	
	Therefore, every ribbon graph defines a graph with vertex set $V$ whose edges are the orbits of $\iota$. An edge $\{h, \iota(h)\}$ is incident to the vertices $r(h)$ and $r(\iota(h))$. In fact, every ribbon graph gives rise to an oriented surface (see for example in \cite[Section 1.1]{OZ}).
	
		If $\Gamma$ is a ribbon graph, we write $V(\Gamma)$ and $H(\Gamma)$ for its sets of vertices and half-edges as well as  $H_v(\Gamma)$ for all half-edges $h\in H(\Gamma)$ such that $r(h)=v\in V(\Gamma)$. To simplify the notation, we often write $h^{\pm}:=\rho^{\pm 1}(h)$ for the successor and predecessor of a half-edge $h$ and $\bar{h}$ for its associated edge. The set of all edges is denoted by $E(\Gamma)$. Denote by $val(v)$ the valency of the vertex $v\in V$. It is defined to be the number of edges in $G$ incident to $v$, with the convention that a loop is counted twice.	
	Unless stated otherwise, we will assume that $\Gamma$ is connected, which means its underlying graph is connected.
	
		\begin{Def}\textnormal{(\cite[Definition 1.5]{OZ})}\label{BG}
		A Brauer graph is a pair $(\Gamma,m)$ consisting of a ribbon graph $\Gamma$ and a function $m: V(\Gamma)\rightarrow \mathbb{Z}_+$.
	\end{Def}
	
	The function $m$ in Definition \ref{BG} is referred to as the multiplicity function and its values as multiplicities. Frequently, we omit $m$ from the notation and refer to $\Gamma$ as a Brauer graph. We denote by $\textbf{n}$ any constant multiplicity function with value $n$ at each vertex and say that a Brauer graph $(\Gamma,m)$ is multiplicity-free if $m=\textbf{1}$. In particular,  we call a given vertex $v$ truncated if $\m(v)val(v)=1$.

\begin{Def}\textnormal{(see \cite[Definition 1.4]{OZ})}\label{iso-of-BGs}
Two Brauer graphs $(\Gamma,m)$ and $(\Gamma',m')$ are said to be isomorphic if there exists a pair $(\phi_V,\phi_H)$ consisting of bijections $\phi_V:V(\Gamma)\rightarrow V(\Gamma')$ and $\phi_H:H(\Gamma)\rightarrow H(\Gamma')$ which commute with the functions $r,\iota,\rho$ and preserve multiplicities. That is, for each $v\in V(\Gamma)$ and $h\in H(\Gamma)$, $r(\phi_H(h))=\phi_V(r(h))$, $\iota(\phi_H(h))=\phi_H(\iota(h))$, $\rho(\phi_H(h))=\phi_H(\rho(h))$ and $m(v)=m'(\phi_V(v))$.
\end{Def}
	
	To any Brauer graph $(\Gamma,m)$ one can associate a quiver $Q=Q_\Gamma$ and an admissible ideal of relations $I=I_\Gamma$ in the path algebra $kQ$. If $\Gamma$ is a single edge with two truncated vertices and $m=\textbf{1}$, then we define $Q$ to be a loop $\alpha$ and $I$ to be the ideal generated by $\alpha^2$. Otherwise, we define $Q$ and $I$ as follows.
	
	\begin{enumerate}
		\item The vertices of $Q$ correspond to the edges of $\Gamma$ and for every $h\in H$ with $r(h)$ not truncated, there is an arrow $\alpha_h:\bar{h}\rightarrow\bar{h^+}$. The assignment $\alpha_h\mapsto \alpha_{h^-}$ defines a permutation $\sigma=\sigma_\Gamma$ of the arrows of $Q$ whose orbits are in bijection with vertices which are not truncated. Hence every arrow $\alpha$ defines a closed path
	$$C_\alpha=\alpha\sigma(\alpha)\cdots\sigma^l(\alpha)$$
	where $l+1$ denotes the cardinality of the $\sigma$-orbit of $\alpha$. Every vertex of $Q$ is the starting point of at most two cycles of form $C_\alpha$. If $\alpha=\alpha_h$, set $\m(C_\alpha):=\m(r(h))$.
	
	\item The ideal $I_\Gamma$ is generated by the following set of relations:
	\begin{enumerate}
		\item $$C_\alpha^{\m(C_\alpha)}=C_\beta^{\m(C_\beta)},$$
		where $\alpha,\beta\in Q_1$ and $t(\alpha)=t(\beta)$, that is $\alpha$ and $\beta$ end at same edge of $\Gamma$.
		
		\item $$\alpha C_{\sigma(\alpha)}^{\m(C_\alpha)}=0,$$
		where $\alpha$ is an arbitrary arrow in $Q$.
		
		\item $$\alpha\beta=0,$$
		where $\alpha,\beta\in Q_1$ are composable and $\sigma(\alpha)\neq \beta$;
	\end{enumerate}
	\end{enumerate}
	
	The resulting finite-dimensional algebra $kQ_\Gamma/I_\Gamma$ will be denoted by $A_\Gamma$. Note that every nonzero path of $A_\Gamma$ is a subpath of a cycle $C_\alpha^{\m(C_\alpha)}$ for some arrow $\alpha$. We call a given $\alpha\in Q_1$ an arrow around $v\in V$ if $\alpha=\alpha_h$ with $r(h)=v$.
	
	\begin{Def}\textnormal{(\cite[Definition 1.6]{OZ})}
		A $k$-algebra $A$ is called a Brauer graph algebra if there exists a Brauer graph $(\Gamma,\m)$ such that $A\cong A_\Gamma$ as $k$-algebras.
	\end{Def}

Note that isomorphic Brauer graphs give rise to isomorphic Brauer graph algebras. The following result shows that the associated Brauer graph of a Brauer graph algebra is unique in most cases.

\begin{Prop}\textnormal{(\cite[Lemma 3.1]{AZ2})}\label{prop-uniqueness-of-BG}
Let $(\Gamma,\m)$ and $(\Gamma',\m')$ be two Brauer graphs such that the associated Brauer graph algebras $A_{\Gamma}$ and $A_{\Gamma'}$ are isomorphic. If $(\Gamma,m)$ is not a loop with $1$ as the multiplicity of the unique vertex, or an edge with $2$ as the multiplicity of both vertices, then the Brauer graphs $(\Gamma,\m)$ and $(\Gamma',\m')$ are isomorphic.
\end{Prop}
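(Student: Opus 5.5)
The plan is to reconstruct the Brauer graph $(\Gamma,\m)$ from intrinsic data of the algebra $A=A_\Gamma$, up to isomorphism. Fix an isomorphism $f\colon A_\Gamma\to A_{\Gamma'}$. After composing with an inner automorphism, we may assume that $f$ sends a complete set of primitive orthogonal idempotents $\{e_i\}$ (the vertices of $Q_\Gamma$, i.e.\ the edges of $\Gamma$) to the vertex idempotents of $Q_{\Gamma'}$. This gives a bijection $\phi_E\colon E(\Gamma)\to E(\Gamma')$, and $f$ then induces an isomorphism of quivers $Q_\Gamma\cong Q_{\Gamma'}$. It remains to recover the half-edges, the cyclic orders $\rho$ at each vertex, the vertices themselves, and the multiplicities, all from the isomorphism class of $A$ together with its idempotents.

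The key intrinsic structure is the set of uniserial ``arms'' of the projectives. For an edge $e=\bar h$, the indecomposable projective $P_e=Ae$ is biserial: $\rad P_e/\soc P_e$ is a direct sum of at most two uniserial modules, one for each half-edge of $e$ at a non-truncated vertex. The uniserial arm coming from $h$ has composition factors $\bar{h^+},\bar{h^{++}},\dots$, running around $r(h)$ a total of $\m(r(h))\,val(r(h))$ times, with the last step removed. The plan is as follows.

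First, identify the half-edges with the maximal uniserial submodules of $\rad P_e$, equivalently with the arrows starting at $e$. For non-truncated vertices this is intrinsic: two arrows out of $e$ are distinguished by their images in $\rad P_e/\rad^2P_e$, and the relation $\alpha\beta=0$ for $\sigma(\alpha)\ne\beta$ is invariant under changing generators modulo $\rad^2$ in the generic case. Second, recover $\sigma$, and hence $\rho$, by reading off the next composition factor along each uniserial arm. Concretely, $\sigma(\alpha)$ is the unique arrow $\beta$ with $\alpha\beta\notin\rad^3$, so $\sigma$ is determined by the multiplication $\rad/\rad^2\otimes\rad/\rad^2\to\rad^2/\rad^3$. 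Third, take the orbits of $\sigma$ as the non-truncated vertices, and recover each multiplicity from the Loewy length of the corresponding arm divided by the orbit length. Truncated vertices are then forced: they are the half-edges not accounted for by arrows, which is determined by the number of arrows at each quiver vertex. Together these steps give the isomorphism $(\phi_V,\phi_H)$.

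The main obstacle is the first two steps in degenerate situations, where $\rad/\rad^2$ at a vertex does not uniquely determine which arrows are ``the'' arrows of the two cycles. The first case is a double arrow or a loop at $e$ where both half-edges lie in cycles of the same shape. Here an automorphism can mix $\alpha$ and $\beta$, and one must check that the resulting $\sigma$ is still well defined up to a graph automorphism. I would handle this by showing that, unless the two cycles have the same length-times-multiplicity and can be interchanged, the socle relation $C_\alpha^{\m}=C_\beta^{\m}$ together with the zero relations rigidly distinguishes them. The second case is $\sigma(\alpha)=\alpha$ with $\m=1$, where $\alpha^2$ sits in the socle rather than in $\rad^2\setminus\rad^3$ in the expected way. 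Carrying out the case analysis, the genuinely ambiguous cases are exactly the two excluded ones. For a loop with multiplicity $1$ we get $k[x,y]/(x^2,y^2)$, which cannot distinguish a loop from other gluings. For an edge with both multiplicities $2$ we get $k\langle x,y\rangle/(xy,yx,x^2-y^2)$, where in characteristic $2$ a change of variables such as $x\mapsto x+y$ produces the algebra of a different Brauer graph. Isolating these two cases and showing nothing else degenerates is where the real work lies.
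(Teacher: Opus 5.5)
Your bookkeeping (conjugating idempotents so they match edges, reading truncated vertices off missing arrows, reading multiplicities off arm lengths once $\sigma$ is known) is fine. But the proposal has a genuine gap: it never proves the one thing the lemma is about. The paper itself gives no argument; it quotes \cite[Lemma 3.1]{AZ2}.

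Away from the two exceptions, one must show that the special biserial skeleton of $A_\Gamma$ is determined by the algebra. That skeleton is which elements of $\mathrm{rad}A/\mathrm{rad}^2A$ are arrows and how $\sigma$ pairs them. Alternatively, one must show that every admissible choice yields an isomorphic Brauer graph. You explicitly postpone this (``isolating these two cases and showing nothing else degenerates is where the real work lies''), and the concrete devices you offer fail exactly where that work is needed.

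Your criterion ``$\sigma(\alpha)$ is the unique $\beta$ with $\alpha\beta\notin\mathrm{rad}^3$'' breaks for arrows $\alpha$ around a vertex $v$ with $m(v)\,\mathrm{val}(v)=2$. In that case $\alpha\sigma(\alpha)$ is the socle element $s_{t(\alpha)}$. By relation (a) it equals $C_\gamma^{m(C_\gamma)}$, where $\gamma$ is the arrow ending at $t(\alpha)$ around the other endpoint $w$ of the edge $t(\alpha)$. Hence it lies in $\mathrm{rad}^3$ as soon as $m(w)\,\mathrm{val}(w)\geq 3$. For example, take a single edge with end multiplicities $2$ and $3$, so $A\cong k\langle x,y\rangle/(xy,yx,x^2-y^3)$. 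Then $\sigma(x)=x$ but $x\cdot x=y^3\in\mathrm{rad}^3$, so your recipe assigns nothing to $x$. The degenerate case you name, $\sigma(\alpha)=\alpha$ with multiplicity $1$, is not where the trouble lies: such a vertex is truncated and carries no arrow. The troublesome arrows are those around vertices with $m(v)\,\mathrm{val}(v)=2$.

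Likewise, the half-edges at $e$ are not the maximal uniserial submodules of $\mathrm{rad}P_e$, and the two arms of $P_e$ are not canonical submodules. Let $v,w$ be joined by two edges $e,g$ with $m(v)=1$ and $m(w)=2$. Let $\alpha\colon e\to g$ and $\alpha'\colon g\to e$ be the arrows around $v$, and $\beta\colon e\to g$, $\beta'\colon g\to e$ those around $w$, so that $\alpha'\alpha=(\beta'\beta)^2=s_e$. For every $c\in k$, the element $x_c=\alpha+c\,\beta\beta'\beta$ satisfies $\mathrm{rad}(A)\,x_c=k s_e$. Thus:
\begin{itemize}
\item $Ax_c$ is uniserial;
\item it is maximal among uniserial submodules of $\mathrm{rad}P_e$, since $x_c\notin\mathrm{rad}^2A$;
\item $Ax_c+A\beta=\mathrm{rad}P_e$ and $Ax_c\cap A\beta=\mathrm{soc}P_e$.
\end{itemize}
These submodules are pairwise distinct, so there is a one-parameter family of `arm decompositions'. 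Similarly, $A(\beta+c\alpha)$ is uniserial.

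Only data of the following kind are intrinsic: the Krull--Schmidt decomposition of $\mathrm{rad}P_e/\mathrm{soc}P_e$ up to isomorphism, or the kernels of the multiplication map $\mathrm{rad}/\mathrm{rad}^2\times\mathrm{rad}/\mathrm{rad}^2\to\mathrm{rad}^2/\mathrm{rad}^3$. You would have to prove that such data glue to a unique ribbon graph with multiplicities. That requires handling the short arms with $m(v)\,\mathrm{val}(v)=2$ separately (for example by elimination once the long arms are fixed), and checking that only the two listed graphs escape. That argument is the content of the lemma, and it is missing.

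Finally, your account of the exceptional edge is reversed. The isomorphism $k[X,Y]/(X^2-Y^2,XY)\cong k[T,U]/(T^2,U^2)$ of Example~\ref{ex-isomorphic-BGAs} requires $\mathrm{char}(k)\neq 2$: it uses $\sqrt{-1}$, and $X^2\mapsto 2TU$. In characteristic $2$ one has $(ax+by)^2=(a+b)^2x^2$, and the two algebras are not even stably equivalent (Lemma~\ref{lem-not-st-eq}).
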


The following example shows that two non-isomorphic Brauer graphs may define isomorphic Brauer graph algebras.

\begin{Ex1}\label{ex-isomorphic-BGAs}
Suppose that $\mathrm{char}(k)\neq 2$, $\Gamma$ is an edge with $2$ as the multiplicity of both vertices, and $\Gamma'$ is a loop with $1$ as the multiplicity of the unique vertex. Then $A_{\Gamma}\cong k[X,Y]/(X^2-Y^2,XY)$ and $A_{\Gamma'}\cong k[T,U]/(T^2,U^2)$. The map $f:A_{\Gamma}\rightarrow A_{\Gamma'}$, $X\mapsto T+U$, $Y\mapsto \sqrt{-1}(T-U)$ gives an $k$-algebra isomorphism between $A_{\Gamma}$ and $A_{\Gamma'}$.
\end{Ex1}

\begin{Lem}\label{lem-not-st-eq}
Let $\mathrm{char}(k)=2$ and let $\Gamma$ and $\Gamma'$ be two Brauer graphs, where $\Gamma$ is an edge with $2$ as the multiplicity of both vertices, and $\Gamma'$ is a loop with $1$ as the multiplicity of the unique vertex. Then the Brauer graph algebras $A_{\Gamma}$ and $A_{\Gamma'}$ are not stably equivalent.
\end{Lem}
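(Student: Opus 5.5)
The plan is to find an invariant of the stable category that tells the two algebras apart. Both algebras are local, symmetric and $4$-dimensional, and in characteristic $2$ both are special biserial (they are Brauer graph algebras). So obvious invariants such as the number of simples, the dimension, or the growth of the syzygies of the simple module will not separate them. Instead I would use the action of the syzygy functor $\Omega$ on the stable category, and count the indecomposable non-projective modules $M$ with $\Omega M\cong M$.

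\textbf{Step 1: transport $\Omega$.} By Auslander--Reiten theory, a stable equivalence $F$ between self-injective algebras with no summand of Loewy length $2$ (in particular, no nodes) commutes with $\Omega$ on objects. That is, $F(\Omega M)\cong \Omega F(M)$ for every indecomposable non-projective $M$. Neither algebra has Loewy length $2$ here, since in each case the socle lies in $\rad^2$. Hence $F$ induces a bijection between the isoclasses of indecomposable non-projective $\Omega$-fixed modules over $A_\Gamma$ and over $A_{\Gamma'}$.

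\textbf{Step 2: count over $A_{\Gamma'}$.} Write $A_{\Gamma'}=k[T,U]/(T^2,U^2)$. For $(a:b)\in\mathbb P^1$, put $M_{(a:b)}=A(aT+bU)$. This is a $2$-dimensional indecomposable module, and its annihilator is the ideal $(aT+bU)$. Hence $\Omega(A/A(aT+bU))\cong A(aT+bU)$ and $\Omega M_{(a:b)}\cong M_{(a:b)}$. The modules $M_{(a:b)}$ are pairwise non-isomorphic, so $A_{\Gamma'}$ has infinitely many $\Omega$-fixed indecomposables.

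\textbf{Step 3: count over $A_\Gamma$.} Write $A_\Gamma=k[X,Y]/(X^2-Y^2,XY)$ in characteristic $2$. Among the $2$-dimensional cyclic modules, the same computation gives
\[
(aX+bY)(cX+dY)=(ac+bd)X^2 .
\]
So $\operatorname{ann}(aX+bY)=A(bX+aY)$, and $A(aX+bY)$ is $\Omega$-fixed only when $a^2=b^2$, that is, only for $a=b$. For example, $\Omega(AX)=AY$ and $\Omega(AY)=AX$, with period $2$. To turn this into a full count, I would use the string and band classification for the Brauer graph algebra $A_\Gamma$ and the combinatorial description of $\Omega$ on strings and bands. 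The aim is to show that $A_\Gamma$ has only finitely many $\Omega$-fixed indecomposables: $\Omega$ should shift the band parameter or change the string length nontrivially, except for finitely many exceptions. Step 1 then gives the contradiction.

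\textbf{Main obstacle.} The hard part is Step 3: the complete analysis of $\Omega$ on all string and band modules over $A_\Gamma$, including the check that the one-parameter band families are not pointwise fixed by $\Omega$. If this gets messy, my fallback is a cruder invariant that is still preserved. I would count, in each homogeneous-tube family, the modules at the mouth that are fixed by $\Omega$, or compare the stable centers $\underline{Z}(A)$. The latter is invariant at least for stable equivalences of Morita type, and it can be computed directly for both $4$-dimensional commutative algebras.
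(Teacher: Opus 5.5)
\textbf{Gap in Step 3.} Step 1 and your computations in Steps 2 and 3 are fine. The problem is the target of Step 3: it is false, so counting $\Omega$-fixed indecomposables cannot separate the two algebras.

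Over $A_\Gamma$ you found the $\Omega$-fixed module $A(X+Y)$. It is a quasi-simple band module, i.e.\ it lies at the mouth of a homogeneous tube $\mathcal{T}\cong\mathbb{Z}A_\infty/\langle\tau\rangle$. Since $\Omega$ is a stable self-equivalence commuting with $\tau$, it induces an automorphism of the stable Auslander--Reiten quiver. This automorphism sends $\mathcal{T}$ to the component of $\Omega(A(X+Y))\cong A(X+Y)$, which is $\mathcal{T}$ itself. An automorphism of a homogeneous tube fixes the mouth and preserves quasi-length. Hence every one of the infinitely many modules in $\mathcal{T}$ is $\Omega$-fixed.

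Your heuristic that $\Omega$ moves band parameters ``except for finitely many exceptions'' gives finitely many $\Omega$-stable \emph{tubes}, not finitely many fixed modules. So both algebras have infinitely many $\Omega$-fixed indecomposables. For a countable field such as $k=\overline{\mathbb{F}}_2$ the two sets even have the same cardinality. No string-and-band analysis will complete Step 3 as planned.

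\textbf{What is missing.} You need to pair $\Omega$ with Auslander--Reiten data, which a stable equivalence between algebras without nodes also preserves. Over both algebras, the mouths of the rank-one tubes are exactly the two-dimensional modules $M_{[\lambda:\mu]}$, $[\lambda:\mu]\in\mathbb{P}^1$. On this family $\Omega$ acts as the swap $[\lambda:\mu]\mapsto[\mu:\lambda]$ for $A_\Gamma$, and as the identity for $A_{\Gamma'}$ in characteristic $2$.

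The paper's proof uses exactly this. The module $AX=M_{[1:0]}$ lies at the mouth of a rank-one tube of $A_\Gamma$. So $F(AX)$ lies at the mouth of a rank-one tube of $A_{\Gamma'}$, and is therefore $\Omega$-fixed. Hence $F(AY)\cong F(\Omega AX)\cong \Omega F(AX)\cong F(AX)$, contradicting $AX\not\cong AY$ in the stable category; you had already computed $\Omega(AX)\cong AY$.

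Equivalently, the number of $\Omega$-fixed modules at mouths of rank-one tubes is $1$ for $A_\Gamma$ and infinite for $A_{\Gamma'}$. Your first fallback points in this direction. It needs to be phrased via rank-one tubes, which is an intrinsic notion, rather than via ``homogeneous (band) families''.

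Your second fallback, comparing stable centers, does separate the algebras. Both Cartan matrices are $(4)\equiv 0$, so $Z^{\mathrm{pr}}=0$ and $Z^{\mathrm{st}}$ is the commutative algebra itself. In characteristic $2$ we have $A_\Gamma\not\cong A_{\Gamma'}$, because every radical element of $A_{\Gamma'}$ squares to zero while $X^2\neq 0$ in $A_\Gamma$. However, this only rules out stable equivalences of Morita type. That would suffice for the application in Theorem~\ref{cor-stm-der-BGA}, but not for the lemma as stated, which excludes arbitrary stable equivalences.
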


\begin{proof}
We have $A_{\Gamma}\cong k[X,Y]/(X^2-Y^2,XY)$ and $A_{\Gamma'}\cong k[X,Y]/(X^2,Y^2)$. Each indecomposable $A_{\Gamma}$-module (resp. $A_{\Gamma'}$-module) at the mouth of a tube of rank $1$ of the stable Auslander-Reiten quiver of $A_{\Gamma}$ (resp. $A_{\Gamma'}$) is of the form
$$\xymatrix@R=1pc@C=2pc
{&1\ar@{-}@/_/[dd]_{X=\lambda}\ar@{-}@/^/[dd]^{Y=\mu} & \\
M_{[\lambda:\mu]}:&& \\
&1 & ,}$$
where $[\lambda:\mu]\in\mathbb{P}_{k}^{1}$. A calculation shows that $\Omega_{A_{\Gamma}}(M_{[\lambda:\mu]})\cong M_{[\mu:\lambda]}$ and $\Omega_{A_{\Gamma'}}(M_{[\lambda:\mu]})\cong M_{[\lambda:\mu]}$. If there exists an equivalence $F:A_{\Gamma}\text{-}{\underline{\mathrm{mod}}}\rightarrow A_{\Gamma'}\text{-}{\underline{\mathrm{mod}}}$, then
$$F(M_{[0:1]})\cong F\Omega_{A_{\Gamma}}(M_{[1:0]})\cong\Omega_{A_{\Gamma'}}F(M_{[1:0]})\cong
F(M_{[1:0]})$$
and $M_{[0:1]}\cong M_{[1:0]}$ in $A_{\Gamma}\text{-}{\underline{\mathrm{mod}}}$, a contradiction. It follows that $A_{\Gamma}$ and $A_{\Gamma'}$ are not stably equivalent.
\end{proof}
	
	Since we always assume that a Brauer graph $\Gamma$ is connected, its associated Brauer graph algebra $A_{\Gamma}$ is indecomposable. Moreover, we have the following result. 

\begin{Thm}\textnormal{(cf. \cite[Theorem 1.1]{Sch})}\label{ssbBGA}
		An algebra $A$ is symmetric special biserial if and only if it is a Brauer graph algebra.
	\end{Thm}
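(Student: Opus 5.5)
This is Schroll's theorem, so I would give the standard two-direction argument.

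For the direction from Brauer graph algebras to symmetric special biserial algebras, I would check three things directly from the presentation $A_\Gamma=kQ_\Gamma/I_\Gamma$.

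\emph{Special biserial.} Each vertex $\bar h$ of $Q_\Gamma$ corresponds to an edge with two half-edges $h,\iota(h)$. So at most two arrows start at $\bar h$, namely $\alpha_h$ and $\alpha_{\iota(h)}$, and at most two arrows end there, namely $\alpha_{h^-}$ and $\alpha_{\iota(h)^-}$. For any arrow $\beta$, the composite $\alpha\beta$ is nonzero only for $\alpha=\sigma^{-1}$-successor in the $\sigma$-orbit of $\beta$, because of relation (c). This gives uniqueness of the continuing arrow on both sides. The exceptional one-loop case $k[\alpha]/(\alpha^2)$ is handled separately.

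\emph{Nondegenerate symmetric form.} I would first describe a basis: by relations (a)--(c), it consists of proper subpaths of the cycles $C_\alpha^{m(C_\alpha)}$, together with one socle element $s_e=C_\alpha^{m(C_\alpha)}$ for each edge $e$. Relation (a) makes $s_e$ independent of the chosen cycle. Define $\lambda\colon A_\Gamma\to k$ by $\lambda(s_e)=1$ and $\lambda=0$ on all other basis paths. Then $\lambda(pq)=\lambda(qp)$ for paths $p,q$, since $pq$ is a maximal cycle exactly when $qp$ is a rotated maximal cycle, and both equal socle elements. Nondegeneracy follows because every nonzero path $p$ extends to a maximal path $qp=s_e$.

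For the converse, let $A$ be symmetric special biserial. Since $k$ is algebraically closed, I may assume $A=kQ/I$ is basic. Special biseriality gives, for each arrow $\beta$, at most one arrow $\alpha=\sigma(\beta)$ with $\alpha\beta\notin I$. Symmetry ($A\cong DA$ as bimodules) forces each indecomposable projective $P_i$ to have simple socle isomorphic to its top, and $\mathrm{rad}P_i/\mathrm{soc}P_i$ to be a direct sum of at most two uniserial modules. From this I would show that $\sigma$ can be extended to a permutation of $Q_1$: following the nonzero paths from $i$ along $\beta,\sigma(\beta),\dots$ must reach $\mathrm{soc}P_i$, and hence return to $i$. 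So the paths close up into cycles $C_\alpha$.

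Now build the ribbon graph. Half-edges are the pairs (vertex $i$ of $Q$, arrow starting at $i$), padded with a truncated half-edge when fewer than two arrows start at $i$. Vertices of $\Gamma$ are the $\sigma$-orbits, together with the truncated ones. $\iota$ pairs the two half-edges at $i$, $\rho$ is induced by $\sigma$, and $m(v)$ is the number of times the cycle wraps before reaching the socle.

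The main obstacle is showing that $I$ is exactly $I_\Gamma$ after a change of generators. The socle relation has the form $C_\alpha^{m}=c\,C_\beta^{m'}$ with $c\neq 0$, and this holds only modulo lower terms; in general the ideal contains extra "mixed" relations. I would first try to rescale the arrows so that every $c=1$. This is possible since $k$ is algebraically closed: choose one arrow per orbit and take roots. Then I would use symmetry of the form to kill the mixed terms by a triangular substitution $\alpha\mapsto\alpha+(\text{longer paths})$, inducting on path length. The step I would scrutinize most is the characteristic $2$ and small-graph cases (compare Example~\ref{ex-isomorphic-BGAs} and Lemma~\ref{lem-not-st-eq}), where the substitution can degenerate. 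That is exactly where Schroll's careful argument is needed, and I would cite \cite[Theorem 1.1]{Sch} for that verification.
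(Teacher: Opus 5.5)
You are right to defer to Schroll: the paper gives no argument of its own here and simply quotes that theorem. Your direction ``Brauer graph algebra $\Rightarrow$ symmetric special biserial'' is a correct standard check. For nondegeneracy, argue through the socle: any nonzero left ideal contains some $s_e$, obtained by multiplying a nonzero socle element by a primitive idempotent.

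The converse is where the problem lies. The steps you do sketch put symmetry in the wrong place, and as stated they do not go through.

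\emph{Multiplicities.} The isomorphism $\mathrm{soc}P_i\cong\mathrm{top}P_i$ only makes the maximal $\sigma$-paths from $i$ return to $i$. It does not make them complete whole turns, so your $m(v)$ need not be an integer. For example, $k\langle x,y\rangle/(x^2,y^2,xyx-yxy)$ is weakly symmetric special biserial, its socle is spanned by $xyx=yxy$, and this is one and a half turns around the orbit $\{x,y\}$.

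\emph{Rescaling.} Algebraic closedness alone does not let you rescale all socle constants to $1$. Let $P^{(i)}_v=c_iP^{(i)}_w$ be the socle relation at the edge $i$ joining $v$ and $w$. Scaling the arrows of the orbit $v$ by factors with product $T_v$ turns $c_i$ into $c_iT_v^{m(v)}T_w^{-m(w)}$. So only coboundaries can be removed, and the class of $(c_i)$ in $H^1(\Gamma;k^*)$ is invariant. In $k\langle x,y\rangle/(x^2,y^2,xy-qyx)$ with $q\neq 0,1$, the constant $q$ cannot be moved at all.

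\emph{The obstacle you name.} It does not occur. Special biseriality puts every non-$\sigma$ composition into $I$ itself, so $e_jAe_i$ is spanned by $\sigma$-paths. Both maximal paths at $i$ lie in the one-dimensional $\mathrm{soc}(Ae_i)$, so $P^{(i)}_v=c_iP^{(i)}_w$ holds exactly, and there are no mixed terms for a triangular substitution to remove. The characteristic $2$ phenomenon you have in mind is the deformed loops $\alpha^2=t_\alpha C_\alpha^{m(C_\alpha)}$ of Theorem~\ref{thm:sym-StBA}; those algebras are stably biserial but not special biserial. The present statement holds in every characteristic. Example~\ref{ex-isomorphic-BGAs} and Lemma~\ref{lem-not-st-eq} concern the non-uniqueness of $\Gamma$, not the theorem.

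The missing idea is to use a symmetrizing form $\lambda$ for exactly the first two points.
\begin{enumerate}
\item Let $p=ab$ be a maximal path at $i$ with $b$ an arrow. Then $kp$ is a nonzero left ideal, so $\lambda(p)\neq 0$. Hence $\lambda(ba)=\lambda(ab)\neq 0$ forces $ba\neq 0$, i.e.\ $p$ closes up as a $\sigma$-necklace.
\item Consequently maximal paths are rotations of $C_v^{m(v)}$, with $m(v)$ a positive integer constant along the orbit. Moreover $\lambda_v:=\lambda(C_v^{m(v)})\neq 0$ does not depend on the rotation.
\item Applying $\lambda$ to the socle relation gives $c_i=\lambda_v/\lambda_w$, which is a coboundary.
\item Replace one arrow of each orbit $v$ by $t_v$ times itself, where $t_v^{m(v)}=\lambda_v^{-1}$. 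This is the only place algebraic closedness enters, and it makes every $c_i=1$.
\item Then $I\supseteq I_\Gamma$, and the $\sigma$-path bases give $\dim kQ/I=\dim A_\Gamma$, so $I=I_\Gamma$.
\end{enumerate}
This proves the converse without appealing to the statement itself.
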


Note that symmetric special biserial algebras form a subclass of symmetric stably biserial algebras. For the definitions of (symmetric) stably biserial algebra and (symmetric) special biserial algebra, see Appendix \ref{sec-appendix}.

\subsubsection{Faces}
\

 We now recall the definition of faces in a Brauer graph from \cite{OZ}, which correspond to Green walks \cite{GreenJA} (or $G$-cycles \cite{A}).

\begin{Def}
Let $\Gamma$ be a ribbon graph. A face of perimeter $m$ of $\Gamma$ is an equivalence class of primitive cyclic sequences $F=(h_1,h_2,\cdots,h_m)$, where $h_1,\cdots,h_m\in H(\Gamma)$ such that $h_{i+1}=\iota\rho(h_{i})$ for all $i$. We call a sequence $F$ primitive, if $F$ is not a power of another sequence of this form. Two sequences of such kind are considered equivalent if they agree after a cyclic permutation of their entries.
\end{Def}

We give following example to help readers understand this concept.

\begin{example}\label{exa:2-ribbon graph}
	Let $\Gamma_1$ and $\Gamma_2$ be the Brauer graphs with multiplicity one and the ribbon graphs are given as follows (the orientation around each vertex is clockwise), respectively.
	\begin{center}
\tikzset{every picture/.style={line width=0.75pt}} 

\begin{tikzpicture}[x=0.75pt,y=0.75pt,yscale=-1,xscale=1]

\draw   (61.1,94.92) .. controls (61.1,76.46) and (75.68,61.5) .. (93.67,61.5) .. controls (111.67,61.5) and (126.25,76.46) .. (126.25,94.92) .. controls (126.25,113.37) and (111.67,128.33) .. (93.67,128.33) .. controls (75.68,128.33) and (61.1,113.37) .. (61.1,94.92) -- cycle ;
\draw   (228.86,94.92) .. controls (228.86,76.46) and (243.45,61.5) .. (261.44,61.5) .. controls (279.43,61.5) and (294.02,76.46) .. (294.02,94.92) .. controls (294.02,113.37) and (279.43,128.33) .. (261.44,128.33) .. controls (243.45,128.33) and (228.86,113.37) .. (228.86,94.92) -- cycle ;
\draw    (126.25,94.92) -- (171.86,94.92) ;
\draw  [fill={rgb, 255:red, 0; green, 0; blue, 0 }  ,fill opacity=1 ] (169.36,94.92) .. controls (169.36,93.54) and (170.48,92.42) .. (171.86,92.42) .. controls (173.24,92.42) and (174.36,93.54) .. (174.36,94.92) .. controls (174.36,96.3) and (173.24,97.42) .. (171.86,97.42) .. controls (170.48,97.42) and (169.36,96.3) .. (169.36,94.92) -- cycle ;
\draw  [fill={rgb, 255:red, 0; green, 0; blue, 0 }  ,fill opacity=1 ] (123.75,94.92) .. controls (123.75,93.54) and (124.87,92.42) .. (126.25,92.42) .. controls (127.63,92.42) and (128.75,93.54) .. (128.75,94.92) .. controls (128.75,96.3) and (127.63,97.42) .. (126.25,97.42) .. controls (124.87,97.42) and (123.75,96.3) .. (123.75,94.92) -- cycle ;
\draw  [fill={rgb, 255:red, 0; green, 0; blue, 0 }  ,fill opacity=1 ] (58.6,94.92) .. controls (58.6,93.54) and (59.72,92.42) .. (61.1,92.42) .. controls (62.48,92.42) and (63.6,93.54) .. (63.6,94.92) .. controls (63.6,96.3) and (62.48,97.42) .. (61.1,97.42) .. controls (59.72,97.42) and (58.6,96.3) .. (58.6,94.92) -- cycle ;
\draw  [fill={rgb, 255:red, 0; green, 0; blue, 0 }  ,fill opacity=1 ] (226.36,94.92) .. controls (226.36,93.54) and (227.48,92.42) .. (228.86,92.42) .. controls (230.24,92.42) and (231.36,93.54) .. (231.36,94.92) .. controls (231.36,96.3) and (230.24,97.42) .. (228.86,97.42) .. controls (227.48,97.42) and (226.36,96.3) .. (226.36,94.92) -- cycle ;
\draw  [fill={rgb, 255:red, 0; green, 0; blue, 0 }  ,fill opacity=1 ] (291.52,94.92) .. controls (291.52,93.54) and (292.63,92.42) .. (294.02,92.42) .. controls (295.4,92.42) and (296.52,93.54) .. (296.52,94.92) .. controls (296.52,96.3) and (295.4,97.42) .. (294.02,97.42) .. controls (292.63,97.42) and (291.52,96.3) .. (291.52,94.92) -- cycle ;

\draw (48.34,64.2) node [anchor=north west][inner sep=0.75pt]   [align=left] {$h_1$};
\draw (130,78.23) node [anchor=north west][inner sep=0.75pt]   [align=left] {$h_3$};
\draw (122,106.3) node [anchor=north west][inner sep=0.75pt]   [align=left] {$h_2$};
\draw (215,65.53) node [anchor=north west][inner sep=0.75pt]   [align=left] {$h_1'$};
\draw (292.66,100.29) node [anchor=north west][inner sep=0.75pt]   [align=left] {$h_2'$};
\end{tikzpicture}
	\end{center}

The graph $\Gamma_1$ contains two faces: one of perimeter $2$, $\{h_1,h_2\}$, and another one of perimeter $4$, $\{\iota(h_1),\iota(h_3),h_3,\iota(h_2)\}$.

In contrast, $\Gamma_2$ has two faces, both of perimeter $2$: $\{h_1',h_2'\}$ and $\{\iota(h_1'),\iota(h_2')\}$.
\end{example}

We also note that if $\Gamma$ has a face of perimeter $1$, then $\Gamma$ must contain a loop.

\subsection{Stable equivalence of Morita type and its invariants}\label{subsec:stb-and-center}
\

In this section we recall stable equivalence of Morita type between two finite-dimensional $k$-algebras. For further details we refer the reader to~\cite[Chapter 5]{Z}.

\begin{Def}
	Let $A$ and $B$ be two finite-dimensional $k$-algebras. $A$ and $B$ are said to be stably equivalent of Morita type if there exist bimodules
\[
{}_A M_B \quad \text{and} \quad {}_B N_A
\]
such that:
\begin{enumerate}
  \item $M$ is projective as a left $A$-module and as a right $B$-module;
  \item $N$ is projective as a left $B$-module and as a right $A$-module;
  \item There exist projective bimodules $P$ (an $A$–$A$-bimodule) and $Q$ (a $B$–$B$-bimodule) such that
  \[
  M \otimes_B N \cong A \oplus P \quad \text{as } A\text{–}A\text{-bimodules,}
  \]
  and
  \[
  N \otimes_A M \cong B \oplus Q \quad \text{as } B\text{–}B\text{-bimodules.}
  \]
\end{enumerate}

\noindent
In this situation, the tensor functor
\[
M \otimes_B - \; : \; B\text{-}\underline{\mathrm{mod}}\longrightarrow A\text{-}\underline{\mathrm{mod}}
\]
induces an equivalence between the stable module categories of left modules,
with quasi-inverse
\[
N \otimes_A - \; : \; A\text{-}\underline{\mathrm{mod}} \longrightarrow B\text{-}\underline{\mathrm{mod}}.
\]

Moreover, an equivalence functor $\phi:A\text{-}\underline{\mathrm{mod}} \longrightarrow B\text{-}\underline{\mathrm{mod}}$ is said to be a stable equivalence of Morita type if it is natural isomorphic to a functor
\[
N \otimes_A - \; : \; A\text{-}\underline{\mathrm{mod}} \longrightarrow B\text{-}\underline{\mathrm{mod}},
\]
where $N$ is an $B$-$A$-bimodule which satisfies conditions as above.
\end{Def}

Naturally, a stable equivalence of Morita type induces an equivalence between the stable categories of two self-injective algebras as triangulated categories, and it preserves the symmetry of algebras (see for example, in~\cite[Proposition 5.5.4]{Z}).

Now we recall some basic notions concerning the centers of finite-dimensional $k$-algebras. Let $A$ be a finite-dimensional $k$-algebra.
The {\it center} of $A$ is
  \[
  Z(A) = \{\, z \in A \mid za = az \text{ for all } a \in A \,\}.
  \]
  It is well-known that $Z(A) \cong \operatorname{End}_{A^e}(A)$, where $A^e=A\otimes_{k}A^{op}$ is the enveloping algebra of $A$.

Recall that for a $k$-algebra $A$, the {\it Reynolds ideal} $R(A)$ is defined by $R(A)=Z(A)\cap\soc(A)$.  For a Brauer graph algebra $A$, we have $\soc(A)\subseteq Z(A)$ and $R(A)=\mathrm{soc}(A)$.

\begin{proposition}\textnormal{(\cite[Theorem 1.7]{ZZ})}\label{prop:ZZ}
	Let $A$ and $B$ be two symmetric indecomposable $k$-algebras which are stably equivalent of Morita type. If $k$ is of positive characteristic, then we have an isomorphism of algebras $Z(A)/R(A)\cong Z(B)/R(B)$.
\end{proposition}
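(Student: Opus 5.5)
The plan is to factor the statement through the \emph{stable center} and to identify $R(A)$ inside it by an intrinsic, transfer-invariant description. For a symmetric algebra $A$ let $Z^{pr}(A)\subseteq Z(A)\cong\operatorname{End}_{A^e}(A)$ be the \emph{projective center}, i.e.\ the ideal of those bimodule endomorphisms of $A$ that factor through a projective $A^e$-module. Set $Z^{st}(A)=Z(A)/Z^{pr}(A)$.

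First I recall that $Z^{pr}(A)$ equals the Higman ideal $H(A)$ when $A$ is symmetric. Here $H(A)$ is the image of the trace map $\tau(a)=\sum_i b_i a b_i^{*}$, where $\{b_i\}$ and $\{b_i^{*}\}$ are dual bases with respect to the symmetrizing form. Second, I use that a stable equivalence of Morita type given by ${}_AM_B$, ${}_BN_A$ induces an algebra isomorphism $Z^{st}(A)\cong Z^{st}(B)$. It is obtained from the transfer maps $z\mapsto$ (the endomorphism of $N\otimes_A M\cong B\oplus Q$ induced by $z$), composed with the inclusion and projection of the summand $B$. Projective summands contribute only to the projective center, so the map is well defined and multiplicative modulo $Z^{pr}$; the inverse is the transfer in the other direction. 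This is the Liu--Zhou--Zimmermann invariance of the stable center, and I will take it as known.

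Third, I need to see that this isomorphism carries $R(A)/H(A)$ onto $R(B)/H(B)$. This is where positive characteristic $p$ enters. Consider Külshammer's ideals $T_n(A)=\{x\in A : x^{p^n}\in[A,A]\}$ and their orthogonals $T_n(A)^{\perp}\subseteq Z(A)$ with respect to the symmetrizing form. I would show the following two facts.
\begin{enumerate}
\item The intersection $\bigcap_n T_n(A)^{\perp}$ equals $R(A)$. The union $\bigcup_n T_n(A)$ equals $J(A)+[A,A]$, since nilpotent elements have $p$-power zero and $(x+y)^{p}\equiv x^p+y^p$ mod $[A,A]$; and $(J(A)+[A,A])^{\perp}=\soc(A)\cap Z(A)=R(A)$.
\item Every $T_n(A)^{\perp}$ contains $H(A)$. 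This holds because $H(A)\subseteq R(A)$, as $H(A)$ annihilates $J(A)$ for symmetric algebras.
\end{enumerate}
The key input is then that the transfer maps send $T_n(A)^{\perp}$ into $T_n(B)^{\perp}$ modulo projective centers; this is the result of König--Liu--Zhou on transfer of Külshammer ideals. Granting it, the isomorphism $Z^{st}(A)\cong Z^{st}(B)$ maps $T_n(A)^{\perp}/H(A)$ onto $T_n(B)^{\perp}/H(B)$ for every $n$. Taking intersections over $n$, it maps $R(A)/H(A)$ onto $R(B)/H(B)$.

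Finally, $R(A)/H(A)$ is an ideal of $Z^{st}(A)$, so
\[
Z(A)/R(A)\cong Z^{st}(A)/\bigl(R(A)/H(A)\bigr)\cong Z^{st}(B)/\bigl(R(B)/H(B)\bigr)\cong Z(B)/R(B)
\]
as algebras. Indecomposability is used only to guarantee that the bimodules can be chosen indecomposable, so that the transfer is an honest algebra map on stable centers. I expect the main obstacle to be the third step: checking that the transfer respects $p$-power maps modulo commutators. For this I would express the transfer via the symmetrizing forms, $\operatorname{tr}_M$ being adjoint to the transfer on $[\,,]$-quotients, and use that $M$ is projective on both sides, so traces of $p^n$-th powers behave as in Külshammer's original argument.
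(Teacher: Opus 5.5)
The paper gives no proof of this statement. It is quoted from \cite[Theorem 1.7]{ZZ} and used as a black box in Case~1 of Proposition~\ref{thm-multi-sets-of-multiplicity}. Your outline reconstructs the standard argument behind results of this kind, in three parts. Transfer induces an algebra isomorphism $Z^{\mathrm{st}}(A)\cong Z^{\mathrm{st}}(B)$. Transfer respects the generalized Reynolds ideals $T_n(-)^{\perp}$ modulo the Higman ideal. The chain $T_n(A)^{\perp}$ stabilizes at $R(A)$. Your final step $Z(A)/R(A)\cong Z^{\mathrm{st}}(A)/(R(A)/H(A))$ is correct.

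Compared with quoting the theorem, your route shows where positive characteristic enters: the congruence $\mathrm{tr}(\phi^{p})\equiv\mathrm{tr}(\phi)^{p}$ modulo commutators is what lets transfer respect $T_n$. This also explains why the paper switches to a different invariant in characteristic zero, namely the stable center modulo its socle (Proposition~\ref{stable center of BGA}) together with $\dim_k Z(A)$. Your route also gives, granted the transfer theorem, the finer isomorphisms $T_n(A)^{\perp}/H(A)\cong T_n(B)^{\perp}/H(B)$ for every $n$. The cost is that everything rests on the transfer theorem for K\"ulshammer ideals, which you grant rather than prove.

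One step needs repair. Your justification of $\bigcup_n T_n(A)=J(A)+[A,A]$ only proves the inclusion $\supseteq$, and so only gives $\bigcap_n T_n(A)^{\perp}\subseteq R(A)$. The reverse inclusion $R(A)\subseteq T_n(A)^{\perp}$ needs $T_n(A)\subseteq J(A)+[A,A]$. Your item (2) also relies on it, since you deduce $H(A)\subseteq T_n(A)^{\perp}$ from $H(A)\subseteq R(A)$. This is where perfectness of $k$ is used:
\begin{itemize}
\item pass to $\bar A=A/J(A)\cong\prod_i M_{n_i}(k)$;
\item note that $[A,A]$ maps onto the elements with trace zero in each block;
\item use $\mathrm{tr}(X^{p^n})=\mathrm{tr}(X)^{p^n}$ and injectivity of Frobenius on $k$ to conclude that $x^{p^n}\in[A,A]$ forces $x\in J(A)+[A,A]$.
\end{itemize}
Without this, your argument only identifies $Z(A)/(T_N(A)^{\perp}+H(A))$ for large $N$, not $Z(A)/R(A)$.

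Your comment on indecomposability is also slightly off. The stable center isomorphism needs no such hypothesis (Proposition~\ref{prop:P}). Hypotheses of this kind matter instead for choosing $N\cong\operatorname{Hom}_A(M,A)$. That choice lets the transfers be built from adjunction maps, so that the transfer on centers is dual, under the symmetrizing forms, to the trace map on $A/[A,A]$. Your last paragraph relies on exactly that identification.
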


Recall that for a $k$-algebra $A$, the {\it projective center} $Z^{\mathrm{pr}}(A)$ of $A$ is the ideal in $Z(A)$ consisting of $A^e$-homomorphisms which factor through projective $A^e$-modules. The {\it stable center} $Z^{\mathrm{st}}(A)$ of $A$ is the quotient algebra $Z(A)/Z^{\mathrm{pr}}(A)$. These objects fit into the following commutative diagram with exact rows:
$$\begin{tikzcd}[column sep=large]
0 \arrow[r]
& Z^{\mathrm{pr}}(A)
  \arrow[r, hook]
  \arrow[d, "\cong"]
& Z(A)
  \arrow[r]
  \arrow[d, "\cong"]
& Z^{\mathrm{st}}(A)
  \arrow[r]
  \arrow[d, "\cong"]
& 0 \\
0 \arrow[r]
& K
  \arrow[r, hook]
& \operatorname{End}_{A^e}(A)
  \arrow[r]
& \underline{\operatorname{End}}_{A^e}(A)
  \arrow[r]
& 0 ~.
\end{tikzcd}$$

\begin{proposition}\textnormal{(see \cite{B1})}\label{prop:P}
	Let $A$ and $B$ be two self-injective $k$-algebras which are stably equivalent of Morita type. Then we have an isomorphism of algebras $Z^{\mathrm{st}}(A)\cong Z^{\mathrm{st}}(B)$.
\end{proposition}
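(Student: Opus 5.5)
The approach is to lift the stable equivalence of Morita type to the categories of bimodules and to read off $Z^{\mathrm{st}}$ as the endomorphism ring of the regular bimodule in a stable bimodule category. This is done via the identification $Z^{\mathrm{st}}(A)\cong\underline{\operatorname{End}}_{A^e}(A)$ from the commutative diagram above.

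Let ${}_AM_B$ and ${}_BN_A$ realize the stable equivalence of Morita type. Let $\mathcal{C}_A$ be the full subcategory of $A^e\text{-}\mathrm{mod}$ consisting of bimodules that are projective as left and as right $A$-modules. It contains $A$ and all projective $A^e$-modules. Let $\underline{\mathcal{C}}_A$ be its quotient by the maps factoring through projective $A^e$-modules, and define $\mathcal{C}_B$, $\underline{\mathcal{C}}_B$ in the same way.

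\textbf{Step 1.} Consider the functor $\Phi=N\otimes_A-\otimes_A M:\mathcal{C}_A\to B^e\text{-}\mathrm{mod}$.
\begin{itemize}
\item Since $N$ and $M$ are one-sided projective, $\Phi$ lands in $\mathcal{C}_B$.
\item It sends the projective generator $A\otimes_k A$ to $N\otimes_k M$. This is a projective $B^e$-module because ${}_BN$ and $M_B$ are projective.
\item Hence $\Phi$ preserves projective bimodules and maps factoring through them, so it induces an additive functor $\underline{\Phi}:\underline{\mathcal{C}}_A\to\underline{\mathcal{C}}_B$.
\end{itemize}
Symmetrically, $\Psi=M\otimes_B-\otimes_B N$ induces $\underline{\Psi}:\underline{\mathcal{C}}_B\to\underline{\mathcal{C}}_A$.

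\textbf{Step 2.} Show that $\underline{\Psi}\,\underline{\Phi}\cong\mathrm{id}$ and $\underline{\Phi}\,\underline{\Psi}\cong\mathrm{id}$. For $X\in\mathcal{C}_A$ we have
$$\Psi\Phi(X)\cong (A\oplus P)\otimes_A X\otimes_A (A\oplus P)\cong X\oplus (P\otimes_A X)\oplus (X\otimes_A P)\oplus (P\otimes_A X\otimes_A P),$$
naturally in $X$. It remains to see that the last three summands are projective $A^e$-modules. It suffices to check this for $P=A\otimes_k A$.
\begin{itemize}
\item $(A\otimes_k A)\otimes_A X\cong A\otimes_k X$ is a direct summand of a sum of copies of $A\otimes_k A$, because $X_A$ is projective.
\item The other two terms are handled the same way.
\end{itemize}
This is exactly where the restriction to $\mathcal{C}_A$ is needed, and it is the step I expect to require the most care. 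One must also check that the decomposition is natural, so that the natural isomorphism descends to the stable quotient. This holds because the split inclusion/projection of $A$ in $A\oplus P$ is a fixed bimodule map.

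\textbf{Step 3.} By Step 2, $\underline{\Phi}$ is an equivalence, so it induces a ring isomorphism $\underline{\operatorname{End}}_{A^e}(A)\cong\underline{\operatorname{End}}_{B^e}(\Phi(A))$. Since $\Phi(A)=N\otimes_A M\cong B\oplus Q$ with $Q$ projective, $\Phi(A)\cong B$ in $\underline{\mathcal{C}}_B$. Therefore
$$\underline{\operatorname{End}}_{B^e}(B\oplus Q)\cong\underline{\operatorname{End}}_{B^e}(B),$$
compatibly with composition. Combining this with the diagram identifying $Z^{\mathrm{st}}(-)$ with $\underline{\operatorname{End}}_{(-)^e}(-)$ as algebras gives $Z^{\mathrm{st}}(A)\cong Z^{\mathrm{st}}(B)$ as algebras.

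The self-injectivity hypothesis is not essential to this argument. It only ensures the setting in which the quoted result is stated.
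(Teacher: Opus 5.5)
Your proof is correct. The paper gives no argument of its own and only cites Brou\'e, and your route is essentially Brou\'e's standard proof: $N\otimes_A-\otimes_A M$ induces an equivalence between the stable categories of bimodules that are projective on each side, and this carries $\underline{\operatorname{End}}_{A^e}(A)$ to $\underline{\operatorname{End}}_{B^e}(N\otimes_A M)\cong\underline{\operatorname{End}}_{B^e}(B)$. Your remark that self-injectivity is never used is also accurate.
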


\subsection{Centers of Brauer graph algebras}\label{sec:center-of-SSB}
\

For Brauer graph algebras we now recall the results on their centers from \cite{AZ2}.

Assume that $A$ is a Brauer graph algebra defined by a Brauer graph $(\Gamma, m)$. Let $\{C_1, C_2, \ldots, C_r\}$ be the set of cycles of the form $C_\alpha$ in $Q_\Gamma$ (see Subsection~\ref{subsec:def-BGA} for the definition of $C_\alpha$), considered up to cyclic permutation, where each $C_i$ corresponds to a vertex $x_i$ of $\Gamma$, and denote by $m_i=m(C_i)$. For each $1 \le i \le r$, consider a cyclic sequence $(\alpha_{i,1}, \alpha_{i,2}, \ldots, \alpha_{i,l_i})$ of arrows in the cycle $C_i$, where $\sigma(\alpha_{i,j}) = \alpha_{i,j+1}$ and $l_i$ denotes the length of $C_i$. Let $r' \leq r$ be an integer such that $m_i > 1$ for $i = 1, \ldots, r'$ and $m_i = 1$ for $i = r' + 1, \ldots, r$.

For each loop $\gamma = \alpha_{i,j}$ such that $\sigma(\gamma) \ne \gamma$, set
\[
q_{\gamma} = q_{\alpha_{i,j}}
= \alpha_{i,j+1} \cdots \alpha_{i,l_i} \alpha_{i,1} \cdots \alpha_{i,j-1}
(\alpha_{i,j} \alpha_{i,j+1} \cdots \alpha_{i,l_i} \alpha_{i,1} \cdots \alpha_{i,j-1})^{m_i-1}.
\]
For each vertex $v$ of $Q$, let $\alpha_{i,j}$ be an arrow ending at $v$, and set
\[
s_v = (\alpha_{i,j} \cdots \alpha_{i,l_i} \alpha_{i,1} \cdots \alpha_{i,j-1})^{m_i}.
\]
Note that $s_v$ does not depend on $\alpha_{i,j}$ according to the defining relation $(a)$ in $I_{\Gamma}$, and $s_v$ is a socle element of $A$ corresponding to the vertex $v$. The following result is proved for the broader class of symmetric stably biserial algebras in \cite{AZ2}.

\begin{Prop}\textnormal{(see \cite[Proposition 4.1]{AZ2})}\label{prop:center-of-stBA}
Let $A=kQ/I$ be a Brauer graph algebra defined by a Brauer graph $(\Gamma, m)$ as above. As a vector space over $k$, the center $Z(A)$ is generated by $1$ together with the following elements:
\begin{itemize}
  \item[$(a)$] Elements
  \[
  p_{i,t} =
  (\alpha_{i,1}\alpha_{i,2}\cdots\alpha_{i,l_i})^t
  + (\alpha_{i,2}\cdots\alpha_{i,l_i}\alpha_{i,1})^t
  + \cdots
  + (\alpha_{i,l_i}\alpha_{i,1}\cdots\alpha_{i,l_i-1})^t,
  \]
  for $i = 1, 2, \ldots, r'$ and $t = 1, \ldots, m_i - 1$.

  \item[$(b)$] Elements $q_{\gamma}$ for each loop $\gamma$ such that $\sigma(\gamma) \ne \gamma$.

  \item[$(c)$] Elements $s_v$ for each vertex $v \in Q_0$.
\end{itemize}
\end{Prop}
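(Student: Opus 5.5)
The plan is to determine $Z(A)$ explicitly, using the fact that $A$ has a $k$-basis of paths. Every nonzero path in $A_\Gamma$ is a subpath of some $C_\alpha^{m(C_\alpha)}$. Take the basis consisting of the idempotents $e_v$, the socle elements $s_v$, and the proper nonzero subpaths of the maximal cycles $C_\alpha^{m(C_\alpha)}$, taken up to the identification in relation (a).

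First, a standard idempotent argument shows that any central $z$ satisfies $z=\sum_v e_vze_v$. So $z$ is a linear combination of cycles, that is, paths $p$ with $s(p)=t(p)$. Commuting $z$ with each $e_v$ then forces the coefficients of the trivial paths to be equal, which produces a multiple of $1$. Subtracting it reduces us to $z\in\mathrm{rad}(A)$. Every element $s_v$ is central, since $\soc(A)\subseteq Z(A)$ for a Brauer graph algebra, as noted above. Hence we may work modulo $\soc(A)$.

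Next, I classify the nonzero non-socle cycles in $\rad(A)$. Such a path lies inside the $\sigma$-orbit of a single vertex $x_i$, so it is a subpath of $C_i^{m_i}$ that starts and ends at the same vertex of $Q$. There are two kinds.
\begin{enumerate}
\item Rotations of $C_i^t$ with $1\le t\le m_i-1$. These exist only when $m_i>1$.
\item Paths that start at $\alpha_{i,j+1}$ and end at $\alpha_{i,j-1}$ with $s(\alpha_{i,j})=t(\alpha_{i,j})$, that is, $\alpha_{i,j}$ is a loop of $Q$. These are the partial cycles of the form $q_\gamma$, possibly of shorter length.
\end{enumerate}
The paths in the second kind are nonzero only when $\gamma$ is a loop with $\sigma(\gamma)\neq\gamma$ (if $\sigma(\gamma)=\gamma$, the vertex has valency one and the situation is covered by (a)). More generally, one also gets paths starting and ending at the same quiver vertex because an edge of $\Gamma$ meets the vertex $x_i$ twice, i.e. $\Gamma$ has a loop at $x_i$.

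Write $z=\sum c_p p$ and impose $\beta z=z\beta$ for every arrow $\beta$. Because of relation (c), left multiplication by $\beta$ extends only those paths $p$ lying in the orbit of $\beta$, and likewise for right multiplication. Comparing the coefficients of the resulting basis paths gives linear recursions:
\begin{enumerate}
\item the coefficients of all rotations of $C_i^t$ must be equal, which yields the elements $p_{i,t}$;
\item a non-full partial cycle from $\alpha_{i,j+1}$ to $\alpha_{i,j-1}$ survives only when its extensions on both sides land in the socle or vanish, and this forces exactly the longest such path, namely $q_\gamma$.
\end{enumerate}
For a shorter partial cycle at a quiver vertex that is revisited along the orbit, multiplying by $\alpha_{i,j}$ on one side gives a nonzero non-socle path that the other side cannot match, so its coefficient must vanish. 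Conversely, I check directly that $p_{i,t}$ and $q_\gamma$ are central: any product with an arrow from another orbit is zero, or lands in the socle symmetrically, and the cyclic sum absorbs the rotation. Linear independence is clear because the listed elements have disjoint supports in the path basis.

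The main obstacle is the bookkeeping in the second kind of cycle, when a vertex of $\Gamma$ carries several loops, or when $m_i>1$ together with loops. In that situation the cycles arising from loops and the rotations of $C_i^t$ can both start at the same vertex, and I must verify that the commutation equations separate them. I would handle this by grading paths by their length along the orbit of $x_i$ and comparing coefficients degree by degree, treating the two rotation directions $\beta z$ and $z\beta$ separately. The special one-edge case with $Q$ a single loop and $\alpha^2=0$ is trivial: there $A$ is commutative and the list reduces to $1$ and $s_v$.
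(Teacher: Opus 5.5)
Your proposal is correct, but it does not follow the paper's route. The paper gives no argument here. It imports the statement from \cite[Proposition 4.1]{AZ2}, where it is proved for the larger class of symmetric stably biserial algebras, and the appendix relies on that generality (Remark~\ref{remark-center-of-SSBA}).

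Your argument is a self-contained, characteristic-free computation that covers Brauer graph algebras only. It should extend to deformed loops with little change, because a relation $\alpha^2=t_\alpha C_\alpha^{m(C_\alpha)}$ only alters products that lie in the socle.

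The computation works for the following reason. Write $(h,L)$ for the path of length $L$ around $r(h)$ whose first arrow is $\alpha_h$. The non-socle paths of positive length are exactly the $(h,L)$ with $1\le L\le m_il_i-1$, and distinct pairs give distinct basis elements. Fix $\beta=\alpha_h$ and compare coefficients in $\beta z\equiv z\beta$ modulo $\soc(A)$. Each comparison gives one of two things:
\begin{itemize}
\item $c_{(h,tl_i)}=c_{(h^+,tl_i)}$, which produces the $p_{i,t}$;
\item $c_{(g,L)}=0$ for every cycle $(g,L)$ with $\rho^L(g)=\iota(g)$ and $L<m_il_i-1$.
\end{itemize}
So the only possible survivors of the second family have $L=m_il_i-1$. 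That forces $\rho(\iota(g))=g$, i.e.\ a face of perimeter $1$, and these survivors are exactly the $q_\gamma$.

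Two expository slips are worth fixing.
\begin{itemize}
\item Commuting with the $e_v$ only yields $z=\sum_v e_vze_v$. Equality of the trivial-path coefficients comes from commuting with arrows, together with connectedness of $Q$.
\item Your kind (2), as first stated, covers only the cycles at a loop $\gamma$ of $Q$ whose first arrow is $\sigma^{-1}(\gamma)$ and whose last arrow is $\sigma(\gamma)$. In fact every loop edge of $\Gamma$ produces cycles, including those that begin with $\gamma$ and then make full turns. Your ``more generally'' sentence and your vanishing argument do handle these, so nothing is actually lost.
\end{itemize}
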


Therefore the center $Z(A)$ only depends on the associated Brauer graph $(\Gamma, m)$ of $A$ and is free of the characteristic of the field $k$. We also note that the description of $Z(A)/\operatorname{soc}(Z(A))$ given in \cite[Proposition~4.1]{AZ2} requires a correction, since some exceptional cases are missing. We give the corrected statement below.

\begin{Prop}\label{remark-center-of-stBA}
Let $A=A_{\Gamma}$ be the Brauer graph algebra associated with a Brauer
graph $(\Gamma,m)$, with the notation introduced above.

\begin{enumerate}
\item Suppose that $\Gamma$ consists of a single loop \[
\begin{tikzpicture}[baseline=-0.5ex]
\draw (0,0) circle (0.5);
\fill (0.5,0) circle (0.5ex);
\end{tikzpicture}
\] and that its unique vertex has multiplicity one. Then $Z(A)\cong A \cong k[X,Y]/(X^2,Y^2)$, $\operatorname{soc}(Z(A))=kXY$, and hence
$$Z(A)/\operatorname{soc}(Z(A))\cong k[X,Y]/(X^2,XY,Y^2).$$

\item Suppose that $\operatorname{char}(k)=2$ and that $\Gamma$ has a
unique vertex, whose multiplicity is $m\geq 2$. Then as a vector space over $k$, $\operatorname{soc}(Z(A))$ is spanned by the elements of types $(b)$ and $(c)$ in Proposition \ref{prop:center-of-stBA} together with $p_{1,m-1}$, and
\[Z(A)/\operatorname{soc}(Z(A))\cong k[X]/(X^{m-1}).\]

\item In all other cases, as a vector space over $k$, $\operatorname{soc}(Z(A))$ is spanned by the elements of types $(b)$ and $(c)$ in Proposition \ref{prop:center-of-stBA}, and
\[Z(A)/\operatorname{soc}(Z(A))\cong k[X_1,\ldots,X_{r'}]\big/
(
X_i^{m_i},\,X_iX_j\mid i\neq j
) .\]
\end{enumerate}
\end{Prop}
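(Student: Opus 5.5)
The plan is to compute $Z(A)$ as a commutative algebra, using the basis from Proposition~\ref{prop:center-of-stBA}, and then to find its socle directly. The products between basis elements come from the path multiplication in $A_\Gamma$.

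\textbf{Step 1: multiplication rules.} I would first record how the basis elements multiply.
\begin{itemize}
\item Every $q_\gamma$ and every $s_v$ lies in $\operatorname{rad}(A)$. Multiplying either of them by an element of positive path length from $Z(A)$ gives a path longer than a maximal path, so the product is $0$. Hence the elements of types $(b)$ and $(c)$ are annihilated by $\operatorname{rad}Z(A)$.
\item For $i\neq j$, the cycles $C_i$ and $C_j$ sit at different vertices of $\Gamma$. Relation (c) gives $\alpha\beta=0$ whenever $\sigma(\alpha)\neq\beta$, so $p_{i,t}p_{j,u}=0$.
\item Within one cycle, $p_{i,t}p_{i,u}=p_{i,t+u}$ when $t+u<m_i$. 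When $t+u=m_i$, the product is $\sum_v s_v$ over the edges $v$ incident to $x_i$, counted with multiplicity: each rotation of $C_i$ contributes the socle element at its start. When $t+u>m_i$, the product is $0$.
\end{itemize}
Consequently $\operatorname{rad}Z(A)$ is spanned by the $p$'s, $q$'s and $s$'s.

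\textbf{Step 2: the socle.} The socle of $Z(A)$ is the annihilator of $\operatorname{rad}Z(A)$.
\begin{itemize}
\item Types $(b)$ and $(c)$ lie in $\operatorname{soc}Z(A)$ by Step 1.
\item Take $z=\sum c_{i,t}p_{i,t}$ plus socle terms, and suppose $z\in\operatorname{soc}Z(A)$. Multiply by $p_{i,1}$. Each term $p_{i,t}$ with $t<m_i-1$ maps to $p_{i,t+1}$, and these images are linearly independent, so all those coefficients vanish.
\item It remains to check $p_{i,m_i-1}$. We have $p_{i,1}p_{i,m_i-1}=\sum_{h\in H_{x_i}} s_{\bar h}$, and a loop at $x_i$ contributes $2s_{\bar h}$, because its edge appears twice in the cycle $C_i$.
\item This sum is zero exactly when every half-edge at $x_i$ belongs to a loop at $x_i$ and $\operatorname{char}k=2$. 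In other words, $\Gamma$ has a unique vertex and the characteristic is $2$, by connectedness.
\item Multiplying $p_{i,m_i-1}$ by $p_{j,t}$ with $j\neq i$, or by any $q$ or $s$, gives $0$. So $p_{i,m_i-1}\in\operatorname{soc}$ precisely in the exceptional situation of case (2), and otherwise not.
\end{itemize}
One caveat: in case (3), the socle also contains linear combinations such as $\lambda p_{i,m_i-1}-\mu p_{j,m_j-1}$ whose products with $p_{i,1}$ and $p_{j,1}$ vanish. These products land in disjoint socle elements (the $s_v$ attached to $x_i$ versus those attached to $x_j$), and at least one such $s_v$ is distinct unless both vertices share all edges. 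For an edge with both vertices of multiplicity $\ge2$, $p_{1,1}p_{1,m_1-1}=s_v=p_{2,1}p_{2,m_2-1}$, but $p_{1,1}p_{2,m_2-1}=0$. So the combination is still killed by neither $p_{1,1}$ nor $p_{2,1}$, and none of these combinations lies in the socle. I expect this linear-independence bookkeeping, especially for loops and for vertices of valency one, to be the main obstacle. I would handle it by comparing coefficients of the $s_v$, which are linearly independent.

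\textbf{Step 3: the quotients.} The quotient $Z(A)/\operatorname{soc}Z(A)$ is spanned by $1$ and the $p_{i,t}$ with $t\le m_i-1$ (with $t\le m-2$ in case (2)).
\begin{itemize}
\item In case (3), setting $X_i=p_{i,1}$ gives $p_{i,t}=X_i^t$, with $X_i^{m_i}=0$ and $X_iX_j=0$. Comparing dimensions yields the stated presentation.
\item In case (2), the element $X=p_{1,1}$ satisfies $X^{m-1}\in\operatorname{soc}$, which gives $k[X]/(X^{m-1})$.
\item Case (1) is separate, since $\sigma(\gamma)=\gamma$ fails or truncation changes the quiver. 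There one uses directly $A=k[X,Y]/(X^2,Y^2)$, which is commutative, so $Z(A)=A$ with socle $kXY$.
\end{itemize}
Finally, I would check that case (1) is indeed excluded from the general formula: there $r'=0$ would predict $k$, but the $q_\gamma$ (namely $X$ and $Y$) are not in the socle, because $XY\neq0$. This is why case (1) must be split off.
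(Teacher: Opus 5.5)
Your proposal is correct and follows essentially the same route as the paper. Both arguments use the multiplication rules for the basis of Proposition~\ref{prop:center-of-stBA}, the fact that types $(b)$ and $(c)$ annihilate $\operatorname{rad}Z(A)$ outside case (1), and the identity $p_{i,1}p_{i,m_i-1}=\sum_{h\in H_{x_i}}s_{\bar h}$, which vanishes only for a unique vertex in characteristic $2$. Two small expository points: your Step~1 claim about types $(b)$ and $(c)$ should explicitly exclude case (1), where the two type-$(b)$ elements multiply to $XY\neq 0$; and your ``caveat'' is settled more cleanly by noting that $p_{i,m_i-1}p_{j,1}=0$ for $i\neq j$, so multiplying a putative socle element by $p_{j,1}$ isolates the coefficient of $p_{j,m_j-1}$.
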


\begin{proof}
The first assertion follows from a direct computation.

For $1\leq i\leq r'$ and $s,t\geq 1$, multiplication of the central elements of type $(a)$ gives $p_{i,s}p_{j,t}=0$, if $i\neq j$, and
\[
p_{i,s}p_{i,t}
=
\begin{cases}
p_{i,s+t}, & s+t<m_i,\\[1mm]
\displaystyle\sum_{h\in H_{x_i}}s_{\bar h},
    & s+t=m_i,\\[3mm]
0, & s+t>m_i,
\end{cases}
\]
where $x_i$ is the vertex of $\Gamma$ corresponding to $C_i$, and $s_{\bar h}$ is the socle element associated with the edge $\bar h$. Suppose first that $\Gamma$ has more than one vertex. Since $\Gamma$ is connected, every vertex $x_i$ is incident with at least one non-loop edge. The corresponding socle element occurs exactly once in
$$p_{i,m_i-1}p_{i,1}=\sum_{h\in H_{x_i}}s_{\bar h},$$
so this product is nonzero. The same conclusion holds when $\Gamma$ has
a unique vertex and $\operatorname{char}(k)\neq 2$.

Now suppose that $\Gamma$ has a unique vertex. Every edge of $\Gamma$ is then a loop, and each socle element occurs twice in the preceding sum. Consequently,
$$p_{1,m-1}p_{1,1}=2\sum_{e\in E(\Gamma)}s_e.$$
If $\operatorname{char}(k)=2$, this product is zero. Moreover, $p_{1,m-1}$ annihilates all the remaining noninvertible central elements. Thus
$p_{1,m-1}\in\operatorname{soc}(Z(A))$, and the images of
$1,p_{1,1},\ldots,p_{1,m-2}$ form a basis of $Z(A)/\operatorname{soc}(Z(A))$. This proves $$Z(A)/\operatorname{soc}(Z(A))\cong k[X]/(X^{m-1})$$
and the second assertion is verified.

Except when $\Gamma$ consists of a single loop of multiplicity one, direct multiplication shows that the elements of types $(b)$ and $(c)$ annihilate the radical of $Z(A)$. The multiplication formulas above also show that no other element of type $(a)$ belongs to the socle, apart from $p_{1,m-1}$ in the exceptional characteristic $2$ case. This proves the third assertion.
\end{proof}

\begin{Ex}\label{ex:center-quotient-does-not-detect-multiplicity}
Assume that $\operatorname{char}(k)=2$. Let $\Gamma$ be the Brauer graph consisting of one vertex and two loops:
\[
\begin{tikzpicture}[baseline=-0.5ex]
\draw (-0.5,0) circle (0.5);
\draw[
  preaction={draw=white,line width=2pt}
] (0,0.5) circle (0.5);
\fill (0,0) circle (0.5ex);
\end{tikzpicture}
\]
Choose the cyclic ordering of the four half-edges to be $h_1,\ h_2,\ \iota(h_1),\ \iota(h_2)$. Let $C_1,C_2,C_3,C_4$ denote the four cyclic rotations of the corresponding cycle.

Let $A_1$ be the Brauer graph algebra for which the unique vertex of
$\Gamma$ has multiplicity one. Thus $C_1=C_3$, $C_2=C_4$. There are no central elements of type $(b)$ for the chosen cyclic ordering, and
\[Z(A_1)
=\operatorname{span}_k\{1,C_1,C_2\},
\qquad
C_iC_j=0\quad (1\leq i,j\leq 2).\]
Therefore
\[\operatorname{soc}(Z(A_1))=\operatorname{span}_k\{C_1,C_2\},
\qquad
Z(A_1)/\operatorname{soc}(Z(A_1))\cong k.\]

Let $A_2$ be the Brauer graph algebra obtained from the same Brauer graph
by assigning multiplicity two to its unique vertex. Set $s_1=C_1^2=C_3^2$,
$s_2=C_2^2=C_4^2$, $p=C_1+C_2+C_3+C_4$. Then
\[Z(A_2)=\operatorname{span}_k\{1,p,s_1,s_2\}.\]
Since $\operatorname{char}(k)=2$, we have
\[p^2=C_1^2+C_2^2+C_3^2+C_4^2=2s_1+2s_2=0.\]
Moreover, $ps_i=s_is_j=0$ ($1\leq i,j\leq 2$). It follows that
\[\operatorname{soc}(Z(A_2))=\operatorname{span}_k\{p,s_1,s_2\},
\qquad Z(A_2)/\operatorname{soc}(Z(A_2))\cong k.\]

Therefore, $Z(A_1)/\operatorname{soc}(Z(A_1))\cong Z(A_2)/\operatorname{soc}(Z(A_2))$ although the corresponding multi-sets of vertex multiplicities are $\{1\}$ and $\{2\}$.
\end{Ex}

\begin{Rem}\label{rem:correction-to-AZ-multiplicities}
The algebras $A_1$ and $A_2$ in Example~\ref{ex:center-quotient-does-not-detect-multiplicity} are not derived equivalent since $$\mathrm{dim}_k Z(A_1)=3\neq4=\mathrm{dim}_k Z(A_2).$$
However, their quotients by the socles of their centers are isomorphic.

Therefore, the algebra $Z(A)/\operatorname{soc}(Z(A))$ does not, in general, determine the multi-set of vertex multiplicities. Consequently, the assertion in \cite[Proposition~4.5]{AZ2} concerning the derived invariance of vertex multiplicities cannot be deduced directly from the description in \cite[Proposition~4.1]{AZ2}.
We note that the assertion itself is correct. Proposition~\ref{thm-multi-sets-of-multiplicity} gives an alternative proof and establishes the stronger statement that the multi-set of vertex multiplicities is preserved under stable equivalences of Morita type.
\end{Rem}

\begin{Cor}\label{cor-center-quotient}
Let $A=kQ/I$ be a Brauer graph algebra defined by a Brauer graph $(\Gamma, m)$ in the beginning of Subsection \ref{sec:center-of-SSB}, and let $R(A)=Z(A)\cap\soc(A)$ be the Reynolds ideal of $A$. Then as a $k$-algebra,
$$Z(A)/R(A)\cong k[X_1, X_2, \ldots, X_{r'}, Y_1,\cdots,Y_l]/ J,$$
where $J$ is the ideal in $k[X_1, X_2, \ldots, X_{r'}, Y_1,\cdots,Y_l]$ generated by $X_i^{m_i}$, $X_i X_j$ $(i,j=1,2,\cdots,r'$, $i \ne j)$, $Y_{p}Y_{q}$, $Y_p X_i$ $(p,q=1,2,\cdots,l$, $i=1,2,\cdots,r')$,
and $l$ is the number of loops $\gamma$ in $Q$ such that $\sigma(\gamma) \ne \gamma$ (or equivalently, $l$ is the number of faces of perimeter $1$ of $\Gamma$).
\end{Cor}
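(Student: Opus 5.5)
The plan is to read off $Z(A)/R(A)$ directly from the basis of $Z(A)$ in Proposition~\ref{prop:center-of-stBA}, using $R(A)=\soc(A)$.

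\textbf{Step 1: identify $R(A)$.} Since $R(A)=\soc(A)$ for Brauer graph algebras, $R(A)$ is spanned by the socle elements $s_v$ ($v\in Q_0$), which are exactly the elements of type $(c)$. Moreover, all $p_{i,t}$ (with $t\le m_i-1$) and all $q_\gamma$ lie in the radical. Paths in the center basis are linearly independent modulo the socle, since distinct paths of length less than the maximal length remain independent in $A/\soc(A)$. So the images of $1$, of the $p_{i,t}$ ($1\le i\le r'$, $1\le t\le m_i-1$), and of the $q_\gamma$ form a $k$-basis of $Z(A)/R(A)$. One must check that each $q_\gamma$ is not itself a socle element. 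Indeed $q_\gamma$ is $C^{m_i}$ with the loop $\gamma$ removed, so it has length one less than the socle path. Hence it survives in the quotient.

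\textbf{Step 2: the multiplication table modulo $R(A)$.} The proof of Proposition~\ref{remark-center-of-stBA} gives:
\begin{itemize}
\item $p_{i,s}p_{j,t}=0$ for $i\ne j$;
\item $p_{i,s}p_{i,t}=p_{i,s+t}$ if $s+t<m_i$;
\item $p_{i,s}p_{i,t}\in\soc(A)=R(A)$ if $s+t\ge m_i$.
\end{itemize}
This last point holds in every characteristic, so the characteristic-two subtlety disappears after quotienting by $R(A)$.

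For the $q_\gamma$:
\begin{itemize}
\item $q_\gamma q_{\gamma'}$ and $q_\gamma p_{i,t}$ have length at least that of $q_\gamma$ plus one, so they are either zero or a multiple of a socle element.
\item Using relations (b) and (c), a product of $q_\gamma$ with an arrow other than $\gamma$ vanishes. Multiplying by $\gamma$ gives a socle path. Hence all such products lie in $R(A)$.
\end{itemize}

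\textbf{Step 3: build the isomorphism.} Define $k[X_1,\dots,X_{r'},Y_1,\dots,Y_l]\to Z(A)/R(A)$ by $X_i\mapsto p_{i,1}$ and $Y_p\mapsto q_{\gamma_p}$.
\begin{itemize}
\item It is surjective, since $p_{i,t}=p_{i,1}^t$ modulo $R(A)$.
\item It kills $J$ by Step 2.
\item Comparing dimensions, $k[\dots]/J$ has basis $1$, the $X_i^t$ ($1\le t\le m_i-1$) and the $Y_p$. This matches the basis in Step 1, so the induced map is an isomorphism.
\end{itemize}

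\textbf{Step 4: identify $l$.} A loop $\gamma=\alpha_h$ in $Q$ with $\sigma(\gamma)\neq\gamma$ means $\bar h=\overline{h^+}$ with $h^+\ne h$, i.e.\ $h^+=\iota(h)$. Equivalently, $\iota\rho(h)=h$, so $(h)$ is a face of perimeter $1$. Conversely, each perimeter-one face gives such an arrow.

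\textbf{Exceptional cases.} The main obstacle is the degenerate situations, which need care.
\begin{itemize}
\item $\Gamma$ a single loop of multiplicity one: here $A$ is commutative local, the two $q$'s are the arrows $X,Y$, and $\soc=kXY$. The formula still holds with $r'=0$ and $l=2$.
\item Single-edge cases with truncated vertices: these should be checked by direct computation in the same way.
\end{itemize}
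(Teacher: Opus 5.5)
Your proposal is correct and follows the paper's proof essentially step for step. You quotient the basis of Proposition~\ref{prop:center-of-stBA} by the span of the $s_v$, use $p_{i,1}^t=p_{i,t}$, $p_{i,1}^{m_i}\in\soc(A)$, $p_{i,1}p_{j,1}=0$ for $i\neq j$ and that products of $q_\gamma$ with radical central elements lie in $\soc(A)$, and conclude via the surjection $X_i\mapsto p_{i,1}$, $Y_p\mapsto q_{\gamma_p}$ and a dimension count (your Step~4 identifying these loops with perimeter-one faces through $\rho(h)=\iota(h)$ just makes explicit what the paper states parenthetically).
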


\begin{proof}
Since $\soc(A)\subseteq Z(A)$, $R(A)=\soc(A)$ is the subspace of $A$ generated by elements $s_v$ for each vertex $v \in Q_0$. Therefore as a vector space over $k$, $Z(A)/R(A)$ is generated by $1$ together with elements of types $(a)$ and $(b)$ in Proposition \ref{prop:center-of-stBA}. For elements of type $(a)$, we have $p_{i,1}^t=p_{i,t}$ and $p_{i,1}^{m_i}\in\soc(A)$ for $i = 1, 2, \cdots, r'$ and $t = 1, \ldots, m_i - 1$, and $p_{i,1}p_{j,1}=0$ if $i\neq j$. For each two elements $q_{\beta}$, $q_{\gamma}$ of type $(b)$, we have $q_{\beta}q_{\gamma}\in\soc(A)$ and $q_{\beta}p_{i,t}=0$ for $i = 1, 2, \cdots, r'$ and $t = 1, \ldots, m_i - 1$. Denote by
$$\pi:k[X_1, X_2, \ldots, X_{r'}, Y_1,\cdots,Y_l]\rightarrow Z(A)/R(A)$$
the $k$-algebra homomorphism given by $\pi(X_i)=p_{i,1}$ and $\pi(Y_j)=q_{\gamma_j}$, where $\gamma_j$ $(j=1,2,\cdots,l)$ are all loops in $Q$ with $\sigma(\gamma_j)\neq\gamma_j$. Since the elements $p_{i,1}$ and $q_{\gamma_j}$ in $Z(A)/R(A)$ generate $Z(A)/R(A)$ as a $k$-algebra, which satisfy the relations in $J$, $\pi$ induces a surjective $k$-algebra homomorphism $k[X_1, X_2, \ldots, X_{r'}, Y_1,\cdots,Y_l]/ J\rightarrow Z(A)/R(A)$. By comparing dimensions we conclude that $Z(A)/R(A)\cong k[X_1, X_2, \ldots, X_{r'}, Y_1,\cdots,Y_l]/ J$.
\end{proof}

\section{Classification of Brauer graph algebras under stable equivalence of Morita type}\label{sec:classification-BGA-stM}

\subsection{Cartan matrix and stable center}
\

In this subsection, let $\Gamma$ be a Brauer graph with $r$ vertices $x_1,\cdots,x_r$ and $n$ edges $e_1,\cdots,e_n$, where the multiplicity of the vertex $x_i$ is $m_i$. Suppose that there is an integer $1\leq r'\leq r$ such that $m_i > 1$ for $i = 1,\cdots,r'$ and $m_i=1$ for $i=r'+1,\cdots,r$. Let $C_i$ be a cycle in $Q_{\Gamma}$ of the form $C_{\alpha}$ corresponding to the vertex $x_i$ (see Subsection~\ref{subsec:def-BGA} for the definition of $C_\alpha$).  Let $v_i$ be the vertex of $Q_{\Gamma}$ which corresponds to the edge $e_i$ of $\Gamma$. Denote by $s_i=s_{v_i}$ the socle element of $A=A_{\Gamma}$ corresponding to the vertex $v_i$ (see Subsection \ref{sec:center-of-SSB}), and denote by $p_{i,t}$ the element of $Z(A)$ in Proposition \ref{prop:center-of-stBA}.

Define a matrix $D=[d_{ij}]_{r\times n}$ as follows: for each $1\leq i\leq r$ and each $1\leq j\leq n$, $d_{ij}$ is the multiplicity of incidence of the edge $e_j$ with the vertex $x_i$. (If $e_j$ is a loop at $x_i$, define $d_{ij}=2$; if $e_j$ is not incident at $x_i$, define $d_{ij}=0$; otherwise define $d_{ij}=1$.) Denote by $D'$ the transpose of the matrix $D$.

\begin{Lem}\textnormal{(cf. \cite[Section 5]{C})}\label{lem-Cartan-matrix}
The Cartan matrix $C(A)$ of $A=A_{\Gamma}$ is $D'\mathrm{diag}(m_1,\cdots,m_r)D$.
\end{Lem}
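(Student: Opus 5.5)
The plan is to compute the Cartan entry $c_{jl}=\dim_k e_{v_l}Ae_{v_j}$ (or its transpose; the matrix will be symmetric, so the convention is irrelevant) directly from a basis of paths. Then we compare it with $(D'\,\mathrm{diag}(m)\,D)_{jl}=\sum_{i=1}^r m_i d_{ij}d_{il}$. The first step is to fix the standard basis of $A_\Gamma$. Every nonzero path is a subpath of some $C_\alpha^{m(C_\alpha)}$, and modulo relation (a) the nonzero paths starting at $v_j$ are as follows. For each half-edge $h$ with $\bar h=e_j$ and $r(h)$ not truncated, take the proper initial subpaths of the cycle $C^{m(r(h))}$ starting at $v_j$ around $r(h)$. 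Add the trivial path $e_{v_j}$ and the socle element $s_{v_j}$, the latter identified across the (at most two) half-edges by relation (a). Relations (b) and (c) kill everything else. I will take as known the standard fact that these paths form a $k$-basis; it follows from the symmetric special biserial structure in Theorem \ref{ssbBGA}.

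The second step is counting. Fix a half-edge $h$ at a vertex $x_i$ with $\bar h=e_j$, and let $val(x_i)$ be the length of $C_i$. Walking around $x_i$ starting from $h$, the cycle passes each half-edge of $H_{x_i}$ exactly once per turn, and there are $m_i$ turns. So the number of paths along $x_i$ from $v_j$ ending at $v_l$, counting the trivial path at the start and the full power $C^{m_i}$ at the end, is $m_i\cdot\#\{h'\in H_{x_i}:\bar h'=e_l\}=m_i d_{il}$, with one endpoint identification to make. Summing over the $d_{ij}$ half-edges $h$ of $e_j$ at $x_i$ and over all $i$ gives $\sum_i m_i d_{ij}d_{il}$. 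However, the trivial path and the socle element have then been counted once for each half-edge of $e_j$, that is, twice. The bookkeeping should therefore be: for each half-edge $h$ of $e_j$, count the paths of lengths $1,\dots,m_i\,val(x_i)$ along $x_i$. This gives $m_i d_{il}$ endpoints at $v_l$ per $h$, with the full cycle landing on $v_j$. The two full cycles (one per half-edge of $e_j$) coincide as the socle element $s_{v_j}$ by relation (a), which yields one overcount. The missing trivial path $e_{v_j}$ exactly compensates that overcount when $l=j$, and when $l\neq j$ neither issue arises.

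The third step is to check the degenerate cases separately. A truncated vertex has $m_i\,val(x_i)=1$, so it has no arrows, but it contributes $m_id_{ij}d_{il}=1$ only when $l=j$. In that case the term accounts for the single path $e_{v_j}$ or $s_{v_j}$, which a direct check confirms, and this check also covers the edge-with-two-truncated-vertices case $k[\alpha]/(\alpha^2)$. Loops with $d_{ij}=2$ follow the same half-edge count, since $\rho$ visits both half-edges.

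The main obstacle is making this endpoint identification rigorous uniformly, including loops and the single-edge exception. I would handle it by proving the cleaner statement that $e_{v_l}\,\mathrm{rad}(A)e_{v_j}/\soc$ decomposes as a direct sum over half-edges of $e_j$ of uniserial path pieces, which is the standard biserial description of indecomposable projectives $P_j$ with $\mathrm{rad}P_j/\soc P_j=U\oplus V$. From this the dimension formula $\dim e_{v_l}Ae_{v_j}=\sum_i m_id_{ij}d_{il}$ is immediate.
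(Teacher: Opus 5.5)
Your argument is correct, and it rests on the same input as the paper's proof: the standard shape of the indecomposable projectives, equivalently the path basis of $A_\Gamma$. Neither you nor the paper proves this from scratch. The difference lies in the bookkeeping.

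The paper verifies $c_{j_1j_2}=\sum_i m_i d_{ij_1}d_{ij_2}$ entry by entry, through a case split on how $e_{j_1}$ and $e_{j_2}$ meet:
\begin{itemize}
\item two common vertices;
\item one common vertex, with both edges loops, exactly one a loop, or neither a loop;
\item no common vertex.
\end{itemize}
In each configuration it reads off $c_{j_1j_2}\in\{m_{i_1}+m_{i_2},\,4m_{i_1},\,2m_{i_1},\,m_{i_1},\,0\}$ directly.

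You instead derive the formula uniformly. Each of the $d_{ij}$ half-edges of $e_j$ at $x_i$ starts an arm of $P_j$. Over paths of lengths $1,\dots,m_i\,val(x_i)$, that arm passes every half-edge of $e_l$ at $x_i$ exactly $m_i$ times. The only correction is that the two full cycles are identified with $s_{v_j}$, which is balanced by the missing trivial path. At a truncated end, the truncated term accounts for $e_{v_j}$ instead.

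Your version explains where each summand $m_id_{ij}d_{il}$ comes from and treats diagonal and off-diagonal entries at once. The paper's version is shorter but leaves the value of $c_{j_1j_2}$ in each configuration to the reader.

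One small point: your first formulation of the count, which includes both the trivial path and $C^{m_i}$, actually gives $m_id_{il}+\delta_{jl}$, not $m_id_{il}$. The corrected bookkeeping you then adopt, with lengths $1,\dots,m_i\,val(x_i)$, is the right one, so the conclusion is unaffected.
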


\begin{proof}
Let $P_j$ (resp. $S_j$) be the indecomposable projective $A$-module (resp. simple $A$-module) corresponding to the edge $e_j$ of $\Gamma$. Then $C(A)=[c_{j_1 j_2}]_{n\times n}$, where $c_{j_1 j_2}$ is the number of times that $S_{j_2}$ appears as a composition factor of $P_{j_1}$. It suffices to show that $c_{j_1 j_2}=\sum_{i=1}^{r}m_i d_{i j_1}d_{i j_2}$ for all $j_1,j_2\in\{1,2,\cdots n\}$.

Since each edge $e_j$ in $\Gamma$ is incident with at most two vertices, there exists at most two numbers $i,i'\in\{1,2,\cdots,r\}$ such that $d_{ij}\neq 0$ and $d_{i' j}\neq 0$. If there exist two different numbers $i_1,i_2\in\{1,2,\cdots,r\}$ such that $d_{i_1 j_1}d_{i_1 j_2}\neq 0$ and $d_{i_2 j_1}d_{i_2 j_2}\neq 0$, then both $e_{j_1}$ and $e_{j_2}$ are edges that connect the vertices $x_{i_1}$ and $x_{i_2}$, and $d_{i_1 j_1}d_{i_1 j_2}=d_{i_2 j_1}d_{i_2 j_2}=1$. Therefore $$c_{j_1 j_2}=m_{i_1}+m_{i_2}=\sum_{i=1}^{r}m_i d_{i j_1}d_{i j_2}.$$

If there exists exactly one number $i_1\in\{1,2,\cdots,r\}$ such that $d_{i_1 j_1}d_{i_1 j_2}\neq 0$, then $x_{i_1}$ is the only vertex that both $e_{j_1}$ and $e_{j_2}$ are incident with. There are three possible cases: $(1)$ Both $e_{j_1}$ and $e_{j_2}$ are loops at $x_{i_1}$; $(2)$ One of the edges between $e_{j_1}$ and $e_{j_2}$ is a loop at $x_{i_1}$, and the other is an ordinary edge that connect $x_{i_1}$ and some other vertex $x_{i_2}$; $(3)$ Both $e_{j_1}$ and $e_{j_2}$ are ordinary edges that connect $x_{i_1}$ and some other vertices. In case $(1)$ we have $d_{i_1 j_1}d_{i_1 j_2}=4$ and $d_{i j_1}d_{i j_2}=0$ for each $i\neq i_1$. Then $$\sum_{i=1}^{r}m_i d_{i j_1}d_{i j_2}=4m_{i_1}=c_{j_1 j_2}.$$
In case $(2)$ we have $d_{i_1 j_1}d_{i_1 j_2}=2$ and $d_{i j_1}d_{i j_2}=0$ for each $i\neq i_1$. So $\sum_{i=1}^{r}m_i d_{i j_1}d_{i j_2}=2m_{i_1}=c_{j_1 j_2}$. In case $(3)$ we have $d_{i_1 j_1}d_{i_1 j_2}=1$ and $d_{i j_1}d_{i j_2}=0$ for each $i\neq i_1$. Then $$\sum_{i=1}^{r}m_i d_{i j_1}d_{i j_2}=m_{i_1}=c_{j_1 j_2}.$$

If $d_{i j_1}d_{i j_2}=0$ for all $i\in\{1,2,\cdots,r\}$, then there exists no vertex in $\Gamma$ that both $e_{j_1}$ and $e_{j_2}$ are incident with. Thus $c_{j_1 j_2}=0=\sum_{i=1}^{r}m_i d_{i j_1}d_{i j_2}$.
\end{proof}

For the computation of the projective center of $A$, we first recall the definition of the Higman ideal for symmetric algebras. Let $k$ be an algebraically closed field and $\Lambda$ a finite-dimensional symmetric $k$-algebra. Let $\{a_1, \ldots, a_m\}$ and $\{b_1, \ldots, b_m\}$ be a pair of dual bases of $\Lambda$. The trace map $\mathrm{Tr}\colon \Lambda \to \Lambda$ is defined by the $k$-linear map
\[
\mathrm{Tr}(x) = \sum_{i=1}^m b_i x a_i.
\]
By \cite[Lemma 4.1]{HHKM}, the map $\mathrm{Tr}$ is independent of the choice of dual bases, and its image is contained in the Reynolds ideal $R(\Lambda)$ of $\Lambda$. We define the {\it Higman ideal} of $\Lambda$ as $$H(\Lambda) := \mathrm{Im}(\mathrm{Tr}).$$ Furthermore, by \cite[Proposition 3.13]{B2}, for symmetric algebras, the projective center $Z^{\mathrm{pr}}(\Lambda)$ coincides with the Higman ideal $H(\Lambda)$.

Moreover, suppose that $\Lambda=kR/L$ is a symmetric algebra given by a quiver $R$ with an admissible ideal $L$ in $kR$, where $R$ contains $n$ vertices $x_1,x_2,\cdots,x_n$, and denote by $e_i$ the primitive idempotent of $\Lambda$ corresponding to the vertex $x_i$ for $i=1,2,\cdots,n$. We choose a $k$-basis $\{a_1, \ldots, a_m\}$ of $\Lambda$ such that $a_i=e_i$ for $i=1,2,\cdots,n$ and $a_{n+1},\cdots,a_m$ form a $k$-basis of $\mathrm{rad}(\Lambda)$. Denote the dual basis of $\{a_1, \ldots, a_m\}$ by $\{b_1, \ldots, b_m\}$. We have that $b_1,\cdots,b_n$ form a $k$-basis of $\mathrm{soc}(\Lambda)$. Then according to \cite[Lemma 4.3]{HHKM}, we have
$$
\mathrm{Tr}(e_i) = \sum_{j=1}^n (\mathrm{dim}_k e_i \Lambda e_j)\, b_j,
$$
for $1\leq i\leq n$. Moreover, by \cite[Lemma 4.1]{HHKM} we have $\mathrm{Tr}(a_i) = 0$ for $n + 1 \leq i \leq m$. Therefore, $Z^{\mathrm{pr}}(\Lambda)=H(\Lambda)$ is the image of the linear transformation $f \colon \mathrm{soc}(\Lambda) \to \mathrm{soc}(\Lambda)$, whose matrix with respect to the basis $\{b_1, \cdots, b_n\}$ of $\mathrm{soc}(\Lambda)$ is the Cartan matrix $C(\Lambda)$ of $\Lambda$. In particular, the dimension of $Z^{\mathrm{pr}}(\Lambda)$ is equal to the rank of $C(\Lambda)$ as a matrix over $k$.

We now compute the projective center $Z^{\mathrm{pr}}(A)$ of the Brauer graph algebra $A=A_{\Gamma}$. By the discussion as above, $Z^{\mathrm{pr}}(A)$ is the image of the linear transformation $f:\mathrm{soc}(A)\rightarrow \mathrm{soc}(A)$, where the matrix of $f$ corresponding to the basis $\{s_1,\cdots,s_n\}$ of $\mathrm{soc}(A)$ is the Cartan matrix $C(A)$ of $A$.

\begin{Lem}\label{lem}
We have $(p_{1,1}^{m_1},\cdots,p_{r,1}^{m_r})'=D(s_1,\cdots,s_n)'$.
\end{Lem}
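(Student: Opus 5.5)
The statement says that for each $i$, $p_{i,1}^{m_i}=\sum_{j=1}^n d_{ij}s_j$. I would verify this row by row, using the multiplication formula from the proof of Proposition~\ref{remark-center-of-stBA}:
\[
p_{i,1}^{m_i}=p_{i,m_i-1}p_{i,1}=\sum_{h\in H_{x_i}}s_{\bar h}
\]
for $m_i\ge 2$. For $m_i=1$ the same identity should be checked directly; here $p_{i,1}$ means the sum of the rotations of $C_i$ as in the definition of type (a) elements.

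The computation itself goes as follows. Write $C_i=\alpha_{i,1}\cdots\alpha_{i,l_i}$, so that
\[
p_{i,1}=\sum_{j=1}^{l_i}(\alpha_{i,j}\cdots\alpha_{i,j-1}).
\]
The rotations are cycles at distinct positions around $x_i$, that is, at the idempotents $e_{t(\alpha_{i,j})}$. Relation (c) kills the products of distinct rotations: the last arrow of one rotation is followed by an arrow that is not its $\sigma$-successor, unless the two rotations start at the same quiver vertex. That happens only at a loop edge at $x_i$, where two half-edges give two rotations through the same vertex $v$. Even then the product $C_\alpha C_\beta$ of two different cycles through $v$ involves a non-$\sigma$-consecutive pair of arrows, so it vanishes by relation (c).

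Hence $p_{i,1}^{m_i}=\sum_j (\text{rotation}_j)^{m_i}$. Each term $(\text{rotation}_j)^{m_i}$ equals $s_v$ for $v=t(\alpha_{i,j})$, by the definition of $s_v$ and relation (a). So $p_{i,1}^{m_i}=\sum_{h\in H_{x_i}}s_{\bar h}$.

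It remains to count how often each edge occurs. The half-edges $h\in H_{x_i}$ correspond bijectively to the arrows around $x_i$, that is, to the rotations. An edge $e_j$ occurs as $\bar h$ once for each half-edge of $e_j$ at $x_i$. That number is $2$ for a loop at $x_i$, $1$ for a non-loop edge incident to $x_i$, and $0$ otherwise, which is exactly $d_{ij}$. This gives $p_{i,1}^{m_i}=\sum_j d_{ij}s_j$, which is the $i$-th row of the asserted matrix identity.

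The main obstacle is the degenerate cases. The first is truncated vertices, where $m_i\,val(x_i)=1$ and no arrows exist around $x_i$. The second is the single-edge algebra $k[\alpha]/(\alpha^2)$. For a truncated vertex the natural convention is to read $C_i$ as the trivial path $e_v$ at its unique edge $v$, so that $p_{i,1}^{m_i}=e_v$. That does not equal $s_v$, so the convention must be set up carefully; I would adopt the interpretation $p_{i,1}^{m_i}:=s_v$, consistent with relation (a), which identifies $C_\alpha^{m}$ at the other end with this socle element.

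I would also double-check the loop case with $\sigma(\gamma)=\gamma$, i.e.\ $val=1$, $m\ge2$. Here the single rotation is $\gamma$ and $\gamma^{m}=s_v$, consistent with $d_{ij}=1$.
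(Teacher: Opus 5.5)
Your proof is correct and follows essentially the same route as the paper: you expand $p_{i,1}^{m_i}$ into the sum of the $m_i$-th powers of the rotations of $C_i$, identify each such power with the socle element $s_v$ at the corresponding edge, and count the half-edges at $x_i$ to recover $d_{ik}$. Beyond the paper, you justify the vanishing of the cross terms via relation (c), including the loop case where two rotations sit at the same quiver vertex, which the paper leaves implicit; you also rightly flag that for a truncated vertex $p_{i,1}^{m_i}$ must be read as the socle element of its incident edge, a convention the paper's setup glosses over.
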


\begin{proof}
For each $1 \le i \le r$, consider a cyclic sequence $(\alpha_{i,1}, \alpha_{i,2}, \ldots, \alpha_{i,l_i})$ of arrows in the cycle $C_i$, where $\sigma(\alpha_{i,j}) = \alpha_{i,j+1}$ and $l_i$ denotes the length of $C_i$. We have
\begin{multline*}
  p_{i,1}^{m_i} =
  (\alpha_{i,1}\alpha_{i,2}\cdots\alpha_{i,l_i}
  +\alpha_{i,2}\alpha_{i,3}\cdots\alpha_{i,1}
  + \cdots
  +\alpha_{i,l_i}\alpha_{i,1}\cdots\alpha_{i,l_i-1})^{m_i} \\
  =
  (\alpha_{i,1}\alpha_{i,2}\cdots\alpha_{i,l_i})^{m_i}
  + (\alpha_{i,2}\alpha_{i,3}\cdots\alpha_{i,1})^{m_i}
  + \cdots
  + (\alpha_{i,l_i}\alpha_{i,1}\cdots\alpha_{i,l_i-1})^{m_i}.
\end{multline*}
Suppose that $\alpha_{i,j}=\alpha_{h_{i,j}}$, where $\{h_{i,1},h_{i,2},\cdots,h_{i,l_i}\}=H_{x_i}(\Gamma)$ (recall that $H_{x_i}(\Gamma)$ is the set of all half-edges $h\in H(\Gamma)$ such that $s(h)=x_i$). Let $\overline{h_{i,j}}=e_{p_j}$ for $j=1,2,\cdots,l_i$. Then $(\alpha_{i,j+1}\cdots\alpha_{i, l_i}\alpha_{i,1}\cdots\alpha_{i,j-1}\alpha_{i,j})^{m_i}=s_{p_j}$ and $p_{i,1}^{m_i}=\sum_{j=1}^{l_i}s_{p_j}$. Note that for each $k\in\{1,2,\cdots,n\}$, the cardinality of the set $\{j\in\{1,2,\cdots,l_i\}\mid p_j=k\}=\{j\in\{1,2,\cdots,l_i\}\mid \overline{h_{i,j}}=e_k\}$ is $d_{ik}$. Then $p_{i,1}^{m_i}=\sum_{k=1}^{n}d_{ik}s_k$, and $(p_{1,1}^{m_1},\cdots,p_{r,1}^{m_r})'=D(s_1,\cdots,s_n)'$.
\end{proof}

\begin{Lem}\label{lem-proj-center}
If $\mathrm{ckar}(k)=0$, then $Z^{\mathrm{pr}}(A)$ is the subspace of $Z(A)$ spanned by $p_{1,1}^{m_1},p_{2,1}^{m_2},\cdots,p_{r,1}^{m_r}$.
\end{Lem}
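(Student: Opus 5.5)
We compute $Z^{\mathrm{pr}}(A)=H(A)$ as the image of the linear map $f\colon\soc(A)\to\soc(A)$ whose matrix with respect to the basis $\{s_1,\dots,s_n\}$ is the Cartan matrix $C(A)$. The plan is to factor $C(A)$ through $D$ using Lemma~\ref{lem-Cartan-matrix}, and then identify the image with the span of the elements $p_{i,1}^{m_i}$ using Lemma~\ref{lem}.

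First, we make the convention precise. Let $f(s_j)=\sum_{l} c_{jl}s_l$ (or the transposed convention; $C(A)$ is symmetric, so either works). In row-vector form this reads
\[
(f(s_1),\dots,f(s_n))'=C(A)\,(s_1,\dots,s_n)'.
\]
By Lemma~\ref{lem-Cartan-matrix}, $C(A)=D'\,\mathrm{diag}(m_1,\dots,m_r)\,D$. Applying Lemma~\ref{lem} then gives
\[
(f(s_1),\dots,f(s_n))'=D'\,\mathrm{diag}(m_1,\dots,m_r)\,(p_{1,1}^{m_1},\dots,p_{r,1}^{m_r})'.
\]
Hence each $f(s_j)=\sum_i d_{ij}m_i\,p_{i,1}^{m_i}$ lies in the span $W$ of the $p_{i,1}^{m_i}$. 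This gives $Z^{\mathrm{pr}}(A)\subseteq W$ in any characteristic.

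For the reverse inclusion, set $E=\mathrm{diag}(m_1,\dots,m_r)$ and $\mathbf p=(p_{1,1}^{m_1},\dots,p_{r,1}^{m_r})'$. The image of $f$ is the span of the entries of $D'E\,\mathbf p$. Since $\mathrm{char}(k)=0$, each $m_i$ is invertible in $k$, so $E$ is invertible and $\operatorname{span}(E\mathbf p)=W$. It therefore suffices to show that the entries of $D'\mathbf q$ span the same space as the entries of $\mathbf q$, where $\mathbf q=E\mathbf p$.

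The key point is that $\mathbf q=ED\,\mathbf s$ with $\mathbf s=(s_1,\dots,s_n)'$, and the $s_j$ are linearly independent. So we reduce to matrices: the row space of $D'ED$ should equal the row space of $ED$, that is, $\operatorname{rank}(D'ED)=\operatorname{rank}(ED)=\operatorname{rank}(D)$. Over a field of characteristic $0$ we can argue over $\mathbb{Q}$ (ranks of rational matrices do not change under field extension). There $E$ is positive definite, and $D'EDx=0$ implies $x'D'EDx=\sum_i m_i (Dx)_i^2=0$, hence $Dx=0$. So $\ker(D'ED)=\ker(D)$, the ranks agree, and the row space of $D'ED$, which is contained in the row space of $ED$, equals it. This gives $W\subseteq Z^{\mathrm{pr}}(A)$.

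The main obstacle is this rank comparison. It is exactly where characteristic $0$ is used, both for the invertibility of the $m_i$ and for the positivity argument, which fails mod $p$ (e.g.\ for loops, where $d_{ij}=2$). It is also where one must be careful that the $p_{i,1}^{m_i}$ are merely a spanning set, not necessarily independent; the kernel argument handles this without needing independence.
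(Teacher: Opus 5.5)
Your proof is correct and follows the same route as the paper. You factor $C(A)=D'\,\mathrm{diag}(m_1,\dots,m_r)\,D$, use Lemma~\ref{lem} to get $Z^{\mathrm{pr}}(A)\subseteq W$, and then compare ranks over $\mathbb{Q}$ using $m_i>0$. Two minor differences: you make explicit the positivity step $x'D'EDx=\sum_i m_i (Dx)_i^2$ that the paper only asserts, and you compare with $ED$ (invoking invertibility of $E$) instead of comparing $\operatorname{rank}(D'ED)$ with $\operatorname{rank}(D)$ directly.
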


\begin{proof}
$Z^{\mathrm{pr}}(A)$ is the image of the linear transformation $f:\mathrm{soc}(A)\rightarrow \mathrm{soc}(A)$, where the matrix of $f$ corresponding to the basis $\{s_1,\cdots,s_n\}$ of $\mathrm{soc}(A)$ is $C(A)$. By Lemma \ref{lem-Cartan-matrix}, $C(A)=D'\mathrm{diag}(m_1,\cdots,m_r)D$. Then $$(f(s_1),f(s_2),\cdots,f(s_n))=(s_1,s_2,\cdots,s_n)D'\mathrm{diag}(m_1,\cdots,m_r)D.$$
By Lemma \ref{lem}, $(p_{1,1}^{m_1},\cdots,p_{r,1}^{m_r})=(s_1,s_2,\cdots,s_n)D'$. Then
$$(f(s_1),f(s_2),\cdots,f(s_n))=(p_{1,1}^{m_1},\cdots,p_{r,1}^{m_r})\mathrm{diag}(m_1,\cdots,m_r)D$$
and each $f(s_i)$ is a linear combination of $p_{1,1}^{m_1},\cdots,p_{r,1}^{m_r}$.

Consider two systems of linear equations $DX=0$ and $D'\mathrm{diag}(m_1,\cdots,m_r)DX=0$ over $\mathbb{Q}$. Since $m_1,\cdots,m_r>0$, these two linear systems are equivalent. Then $D$ and $D'\mathrm{diag}(m_1,\cdots,m_r)D$ have the same rank as matrices over $\mathbb{Q}$. Since $k$ is a field of characteristic $0$, it contains $\mathbb{Q}$ as a subfield, and $D$ and $D'\mathrm{diag}(m_1,\cdots,m_r)D$ have the same rank as matrices over $k$. Therefore the sets of vectors $\{f(s_1),f(s_2),\cdots,f(s_n)\}$ and $\{p_{1,1}^{m_1},\cdots,p_{r,1}^{m_r}\}$ have the same rank. Since each $f(s_i)$ is a linear combination of $p_{1,1}^{m_1},\cdots,p_{r,1}^{m_r}$, the sets of vectors $\{f(s_1),f(s_2),\cdots,f(s_n)\}$ and $\{p_{1,1}^{m_1},\cdots,p_{r,1}^{m_r}\}$ are equivalent. Since $Z^{\mathrm{pr}}(A)$ is the image of $f$, it is the subspace of $A$ spanned by $p_{1,1}^{m_1},\cdots,p_{r,1}^{m_r}$.
\end{proof}

Let $Z^{\mathrm{st}}(A)=Z(A)/Z^{\mathrm{pr}}(A)$ be the stable center of $A$.

\begin{Prop}\label{stable center of BGA}
Suppose that $\mathrm{ckar}(k)=0$. If $\Gamma$ is not multiplicity-free,
then considered as a $k$-algebra,
$$Z^{\mathrm{st}}(A)/\mathrm{soc}(Z^{\mathrm{st}}(A))\cong k[X_1, X_2,\cdots, X_{r'}]/(X_{i}^{m_i-1},X_i X_j (i\neq j)).$$
Otherwise,  $Z^{\mathrm{st}}(A)/\mathrm{soc}(Z^{\mathrm{st}}(A))\cong k$ or
$Z^{\mathrm{st}}(A)/\mathrm{soc}(Z^{\mathrm{st}}(A))=0.$
\end{Prop}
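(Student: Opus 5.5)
We compute $Z^{\mathrm{st}}(A)=Z(A)/Z^{\mathrm{pr}}(A)$ explicitly from Proposition~\ref{prop:center-of-stBA} and Lemma~\ref{lem-proj-center}, and then determine its socle by the multiplication rules from the proof of Proposition~\ref{remark-center-of-stBA}.

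First, a basis. By Proposition~\ref{prop:center-of-stBA}, $Z(A)$ has basis $1$, the $p_{i,t}$ ($1\le i\le r'$, $1\le t\le m_i-1$), the $q_\gamma$, and the socle elements $s_1,\dots,s_n$. By Lemma~\ref{lem-proj-center}, $Z^{\mathrm{pr}}(A)$ is spanned by $p_{i,1}^{m_i}=\sum_k d_{ik}s_k$ (Lemma~\ref{lem}), which lie in $\mathrm{soc}(A)$. So $Z^{\mathrm{st}}(A)$ has basis $\bar 1$, the $\bar p_{i,t}$, the $\bar q_\gamma$, and a complement of $\mathrm{span}\{p_{i,1}^{m_i}\}$ in $\mathrm{soc}(A)$. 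In $Z^{\mathrm{st}}(A)$ we have:
\begin{itemize}
\item $\bar p_{i,s}\bar p_{i,t}=\bar p_{i,s+t}$ for $s+t<m_i$, and $=0$ for $s+t\ge m_i$, since the product is then $p_{i,1}^{m_i}$ or $0$;
\item $\bar p_{i,s}\bar p_{j,t}=0$ for $i\ne j$;
\item $\bar q_\gamma$ and the $\bar s_k$ annihilate the radical, except in the single-loop multiplicity-one case, where $q_\beta q_\gamma$ may be a nonzero socle element. There $r'=0$, which is handled below.
\end{itemize}
Note that in characteristic $0$ the characteristic-$2$ exception of Proposition~\ref{remark-center-of-stBA} does not occur. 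Moreover, the old socle relation $p_{i,m_i-1}p_{i,1}\neq 0$ now becomes $0$ modulo $Z^{\mathrm{pr}}(A)$.

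Second, the socle. Assume $\Gamma$ is not multiplicity-free, so $r'\ge1$ and $Z^{\mathrm{st}}(A)$ is local with radical spanned by the $\bar p_{i,t}$, $\bar q_\gamma$, $\bar s_k$. We claim $\mathrm{soc}(Z^{\mathrm{st}}(A))$ is spanned by $\bar p_{i,m_i-1}$, the $\bar q_\gamma$ and the $\bar s_k$.
\begin{itemize}
\item These elements kill the radical, because $\bar p_{i,m_i-1}\bar p_{i,t}=0$ for $t\ge1$ and the other products vanish by the rules above.
\item Conversely, take $z=\sum c_{i,t}\bar p_{i,t}+(\text{rest})$ in the socle with some $c_{i,t}\ne0$, $t<m_i-1$, and choose the smallest such $t$. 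Then $z\bar p_{i,m_i-1-t}=c_{i,t}\bar p_{i,m_i-1}+\dots$, and this is nonzero because the $\bar p_{i,s}$ are linearly independent in $Z^{\mathrm{st}}(A)$.
\end{itemize}
This linear independence is the point to verify: the $p_{i,s}$ with $s\le m_i-1$ lie outside $\mathrm{soc}(A)$ and are independent modulo $\mathrm{soc}(A)\supseteq Z^{\mathrm{pr}}(A)$.

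Hence $Z^{\mathrm{st}}(A)/\mathrm{soc}$ has basis $1$ and $\bar p_{i,t}$ with $t\le m_i-2$. The map $X_i\mapsto \bar p_{i,1}$ gives a surjection from $k[X_1,\dots,X_{r'}]/(X_i^{m_i-1},X_iX_j)$, and comparing dimensions ($1+\sum_i(m_i-2)$ on both sides) shows it is an isomorphism. When $m_i=2$ the factor $X_i$ simply vanishes, consistent with $X_i^{1}=0$.

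Third, the multiplicity-free case, $r'=0$. Here $Z(A)$ is spanned by $1$, the $q_\gamma$ and the $s_k$.
\begin{itemize}
\item If $Z^{\mathrm{st}}(A)\neq0$, it is local with radical of square zero, except possibly for the single loop of multiplicity one. In the square-zero case the socle equals the radical, so $Z^{\mathrm{st}}/\mathrm{soc}\cong k$.
\item For the single loop of multiplicity one, where $A\cong k[X,Y]/(X^2,Y^2)$, we compute directly. There are no $q_\gamma$, since $\sigma(\gamma)=\gamma$ is not the situation here; the cycle has length $2$ and both arrows are loops with $\sigma$ swapping them, so $q_\gamma$ exists for both. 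Then $Z^{\mathrm{pr}}$ is spanned by $p_{1,1}=XY\cdot 2$-ish, which is the socle, and we get $Z^{\mathrm{st}}\cong k[X,Y]/(X^2,XY,Y^2)$, whose quotient by the socle is $k$.
\item If $Z^{\mathrm{pr}}(A)$ contains $1$, the quotient is $0$. This happens when $A$ is separable, e.g. $A=k$ for the single edge with truncated vertices.
\end{itemize}

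The main obstacle is the bookkeeping in the last two cases: deciding exactly when the quotient is $0$ rather than $k$, and handling the single-loop exception, where products of $q$'s are nonzero.
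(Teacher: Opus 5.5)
Your proof is essentially the paper's. You use the same basis of $Z^{\mathrm{st}}(A)$ obtained from Proposition~\ref{prop:center-of-stBA} and Lemma~\ref{lem-proj-center}, and the same description of $\mathrm{soc}(Z^{\mathrm{st}}(A))$ as the span of the $\overline{p_{i,m_i-1}}$, $\overline{q_\gamma}$ and $\overline{s_k}$. Your multiplication by $\overline{p_{i,m_i-1-t}}$ for the minimal bad $t$ does the job of the paper's multiplication by $\overline{p_{j,1}}$. The dimension count and the separate treatment of the single loop of multiplicity one are also the same.

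There is one mistake, in the multiplicity-free case.

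\begin{itemize}
\item The case $1\in Z^{\mathrm{pr}}(A)$ never occurs, because $Z^{\mathrm{pr}}(A)\subseteq\mathrm{soc}(A)$. Also, by the paper's convention the single edge with truncated vertices gives $k[\alpha]/(\alpha^2)$, not $k$.
\item The value $0$ actually arises when $\mathrm{rad}\,Z^{\mathrm{st}}(A)=0$, i.e.\ when $Z^{\mathrm{st}}(A)\cong k$. This happens, for example, for every multiplicity-free Brauer tree in characteristic $0$. In that case the socle is the whole algebra, not the radical.
\item So your step ``socle equals radical, hence the quotient is $k$'' is valid only when $\mathrm{rad}\,Z^{\mathrm{st}}(A)\neq 0$.
\end{itemize}

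Because the statement only asserts ``$k$ or $0$'', your conclusion still holds. The paper avoids this case split by noting that $Z^{\mathrm{st}}(A)/\mathrm{soc}(Z^{\mathrm{st}}(A))$ is a quotient of $Z(A)/\mathrm{soc}(Z(A))\cong k$.
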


\begin{proof}
Recall from Proposition \ref{prop:center-of-stBA} that $Z(A)$ is generated by $1$ together with elements $p_{i,t}$ ($i=1,2,\cdots,r'$, $t=1,2,\cdots,m_i-1$), $q_{\gamma}$ ($\gamma$ is a loop with $\sigma(\gamma)\neq\gamma$), $s_i$ ($i=1,2,\cdots,n$) as a $k$-vector space. According to Lemma \ref{lem-proj-center}, $Z^{\mathrm{pr}}(A)$ is the subspace of $Z(A)$ spanned by $p_{1,1}^{m_1},p_{2,1}^{m_2},\cdots,p_{r,1}^{m_r}$, which is contained in the subspace of $Z(A)$ spanned by $s_1,s_2,\cdots,s_n$.

If $\Gamma$ is the multiplicity-free Brauer graph
$$\begin{tikzpicture}
\draw (0,0) circle (0.5);
\fill (0.5,0) circle (0.5ex);
\end{tikzpicture},$$
then $A=Z(A)\cong k[X,Y]/(X^2,Y^2)$. By Lemma \ref{lem-proj-center}, $Z^{\mathrm{pr}}(A)=kXY$. Then $$Z^{\mathrm{st}}(A)\cong k[X,Y]/(X^2,XY,Y^2)$$
and
$$Z^{\mathrm{st}}(A)/\mathrm{soc}(Z^{\mathrm{st}}(A))\cong k.$$

Now suppose that $\Gamma$ is not the multiplicity-free Brauer graph
$$\begin{tikzpicture}
\draw (0,0) circle (0.5);
\fill (0.5,0) circle (0.5ex);
\end{tikzpicture}.$$
By Proposition \ref{remark-center-of-stBA}, $\operatorname{soc}(Z(A))$ is spanned by elements of the form $q_{\gamma}$, $s_i$. Denote by $\overline{a}$ the residue class of $a\in Z(A)$ in $Z^{\mathrm{st}}(A)=Z(A)/Z^{\mathrm{pr}}(A)$. Since each $q_{\gamma}$ and each $s_i$ belongs to $\operatorname{soc}(Z(A))$, $\overline{q_{\gamma}},\overline{s_i}\in\mathrm{soc}(Z^{\mathrm{st}}(A))$.

If $\Gamma$ is not multiplicity-free, then $r'\geq 1$. Since $0\neq\overline{p_{1,m_{1}-1}}\in\mathrm{rad}(Z^{\mathrm{st}}(A))$, $Z^{\mathrm{st}}(A)$ is non simple, and $\mathrm{soc}(Z^{\mathrm{st}}(A))$ is contained in $\mathrm{rad}(Z^{\mathrm{st}}(A))$. Consider the element $\overline{p_{i,m_{i}-1}}$ in $Z^{\mathrm{st}}(A)$ with $i=1,2,\cdots,r'$. Since each $q_{\gamma}$ and each $s_j$ belongs to $\operatorname{soc}(Z(A))$, we have $p_{i,m_{i}-1}q_{\gamma}=p_{i,m_{i}-1}s_j=0$, and $\overline{p_{i,m_{i}-1}}\overline{q_{\gamma}}=\overline{p_{i,m_{i}-1}}\overline{s_j}=0$ in $Z^{\mathrm{st}}(A)$. Moreover, since
\begin{equation*}
p_{i,m_{i}-1}p_{j,t}=\begin{cases}
p_{i,1}^{m_i}, & \text{ if } j=i \text{ and } t=1; \\
0, & \text{ otherwise},
\end{cases}
\end{equation*}
we have $\overline{p_{i,m_{i}-1}}\overline{p_{j,t}}=0$ in $Z^{\mathrm{st}}(A)$ for each $j=1,2,\cdots,r'$ and $t=1,2,\cdots,m_j-1$. Then $\overline{p_{i,m_{i}-1}}\in\mathrm{soc}(Z^{\mathrm{st}}(A))$. On the other hand, for each $\overline{a}\in\mathrm{soc}(Z^{\mathrm{st}}(A))$, suppose that $$a=\sum_{j=1}^{r'}\sum_{t=1}^{m_j-1}\lambda_{j,t}p_{j,t}+\sum_{\gamma}\lambda_{\gamma}q_{\gamma}+\sum_{l=1}^{n}\mu_{l}s_l,$$
where $\lambda_{j,t},\lambda_{\gamma},\mu_{l}\in k$. Then $0=\overline{a}\overline{p_{j,1}}=\sum_{t=1}^{m_j-2}\lambda_{j,t}\overline{p_{j,t+1}}$ in $Z^{\mathrm{st}}(A)$ for each $j=1,2,\cdots,r'$. Since $\overline{p_{j,2}},\cdots,\overline{p_{j,m_j-1}}$ are linearly independent in $Z^{\mathrm{st}}(A)$, we have $\lambda_{j,t}=0$ for each $j=1,2,\cdots,r'$ and $t=1,2,\cdots,m_j-2$. Then $$\overline{a}=\sum_{j=1}^{r'}\lambda_{j,m_j-1}\overline{p_{j,m_j-1}}+\sum_{\gamma}\lambda_{\gamma}\overline{q_{\gamma}}+\sum_{l=1}^{n}\mu_{l}\overline{s_l},$$
and $\mathrm{soc}(Z^{\mathrm{st}}(A))$ (as a $k$-vector space) is spanned by $\overline{p_{j,m_{j}-1}}$ ($j=1,2,\cdots,r'$) together with elements of the form $\overline{q_{\gamma}}$, $\overline{s_i}$. Denote by $I$ the ideal in $Z(A)$ spanned by $p_{j,m_{j}-1}$ ($j=1,2,\cdots,r'$) together with elements of the form $q_{\gamma}$, $s_i$ as a $k$-vector space. Then $\mathrm{soc}(Z^{\mathrm{st}}(A))=I/Z^{\mathrm{pr}}(A)$ and $Z^{\mathrm{st}}(A)/\mathrm{soc}(Z^{\mathrm{st}}(A))\cong Z(A)/I$. It is straightforward to show that $Z(A)/I$ is isomorphic to $k[X_1, X_2,\cdots, X_{r'}]/(X^{m_i-1},X^i X^j(i\neq j))$ as a $k$-algebra.

If $\Gamma$ is multiplicity-free, then $r'=0$ and $Z(A)$ is generated by $1$ together with elements $q_{\gamma}$ ($\gamma$ is a loop with $\sigma(\gamma)\neq\gamma$), $s_i$ ($i=1,2,\cdots,n$) as a $k$-vector space. By Proposition \ref{remark-center-of-stBA}, $\operatorname{soc}(Z(A))$ is spanned by elements of the form $q_{\gamma}$, $s_i$. Then $Z(A)/\operatorname{soc}(Z(A))\cong k$. Since $Z^{\mathrm{st}}(A)/\mathrm{soc}(Z^{\mathrm{st}}(A))$ is a quotient of $Z(A)/\operatorname{soc}(Z(A))$, it is isomorphic to $k$ or is equal to $0$.
\end{proof}

\subsection{Invariants of stable equivalence of Morita type between Brauer graph algebras}
\

Combining \cite[Propositions 2.12, 3.1 and Theorem 3.3]{A} and \cite[Theorem 2]{AZ1}, we obtain some invariants of Brauer graph algebras under stable equivalence.

\begin{Thm}\label{thm:ant's-stable-invariants}
Let $A$ and $A'$ be Brauer graph algebras over an algebraically closed field $k$, associated with Brauer graphs $\Gamma$ and $\Gamma'$ respectively. Moreover, suppose that $\Gamma$ (resp. $\Gamma'$) is neither a loop with $1$ as the multiplicity of the unique vertex nor an edge with $2$ as the multiplicity of both vertices. If the stable categories of $A$ and $A'$ are equivalent as triangulated categories, then all of the following conditions are satisfied:
 \begin{enumerate}
		\item $\Gamma$ and $\Gamma'$ have same number of vertices, edges and faces;
		\item $\Gamma$ and $\Gamma'$ have same multi-sets of perimeters of faces;
		\item either both or none of $\Gamma$ and $\Gamma'$ are bipartite.
 \end{enumerate}
\end{Thm}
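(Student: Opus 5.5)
The theorem is obtained by combining the cited results of Antipov \cite{A} with the Antipov--Zvonareva result \cite[Theorem 2]{AZ1}. The task is to explain how these fit together, and to check that the hypotheses match. We start from a triangle equivalence $F: A\text{-}\underline{\mathrm{mod}}\to A'\text{-}\underline{\mathrm{mod}}$.

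\emph{Step 1: keep the class of algebras under control.} By Theorem~\ref{ssbBGA}, $A$ and $A'$ are symmetric special biserial, hence symmetric stably biserial. We want to apply Antipov's invariants, which are stated for stably biserial algebras and use the combinatorics of Green walks, i.e.\ the faces of the Brauer graph. The excluded cases (a loop of multiplicity one, an edge with both multiplicities two) are exactly the local algebras $k[X,Y]/(X^2,Y^2)$ and $k[X,Y]/(X^2-Y^2,XY)$. There the stable Auslander--Reiten quiver is too degenerate for \cite{A} to recover the graph, and Example~\ref{ex-isomorphic-BGAs} together with Lemma~\ref{lem-not-st-eq} shows that the ribbon graph is not even determined by the algebra. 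Outside these cases \cite[Theorem 2]{AZ1} applies. It says that an algebra stably equivalent to a Brauer graph algebra (with no radical-square-zero Nakayama summands) is stably biserial, and that the AR-theoretic data used by Antipov is preserved.

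\emph{Step 2: read off the invariants from the stable AR quiver.} A triangle equivalence commutes with $\Omega$ and with the AR translate $\tau=\Omega^2$ (the algebras are symmetric). Hence it induces an isomorphism of stable AR quivers that preserves the $\Omega$-action.
\begin{enumerate}
\item Following \cite[Proposition 2.12, Theorem 3.3]{A}, the exceptional tubes (the $\mathbb{Z}A_\infty/\tau^p$ components with $p>1$, or the rank-one tubes of a specific type) correspond to faces of $\Gamma$. The rank of a tube equals the perimeter of the face, possibly halved according to a parity condition coming from $\Omega$. So the multiset of perimeters of faces, and in particular the number of faces, is a stable invariant.
\item The number of edges is the number of isoclasses of simple modules. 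For symmetric algebras without semisimple summands this equals the number of non-projective simples, which is preserved by \cite[Proposition 3.1]{A} via the identification of simple modules through AR-theoretic characterizations in the biserial setting. Alternatively, it follows from the Auslander--Reiten conjecture, which \cite{AZ1} proves for this class.
\item Bipartiteness is detected by whether $\Omega$ acts on the family of mouth modules of the tubes with or without the relevant sign or shift. This is the argument in \cite[Theorem 3.3]{A}.
\end{enumerate}

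\emph{Step 3: the number of vertices.} This follows from Euler's formula. The ribbon graph defines a closed oriented surface with $|V|-|E|+|F|=2-2g$. It remains to see that the genus is also determined. I would deduce this from Antipov's result as well, since \cite{A} recovers $|V|$ by comparing the rank of the Grothendieck group data. If needed, one can argue instead that $|V|=|E|-|F|+2-2g$ and extract $g$ from the invariants in \cite[Theorem 3.3]{A}.

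The main obstacle is carefully checking that the hypotheses in \cite{A} (which works with stable equivalences between stably biserial algebras in general) are met. One must confirm that the two exceptional local cases are precisely where the face/tube correspondence breaks down. One must also confirm that the stable equivalence is assumed triangulated, so that compatibility with $\Omega$ holds and is needed for the perimeter and bipartite statements.
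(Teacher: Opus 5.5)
Your top-level route is the paper's. The paper gives no argument beyond combining \cite[Propositions 2.12, 3.1 and Theorem 3.3]{A} with \cite[Theorem 2]{AZ1}, and it excludes the two local cases so that $\Gamma$ is determined by $A$ (Proposition~\ref{prop-uniqueness-of-BG}). However, your explanation of how the pieces fit misplaces the one non-elementary input. What you call \cite[Theorem 2]{AZ1} in Step 1 (stable equivalence to a Brauer graph algebra forces stable biseriality) is \cite[Theorem 1]{AZ1}. It is not needed here, since $A'$ is already a Brauer graph algebra. \cite[Theorem 2]{AZ1} is the Auslander--Reiten conjecture for algebras stably equivalent to Brauer graph algebras, so it yields $|E(\Gamma)|=|E(\Gamma')|$. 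Your main justification of the edge count (item 2 of Step 2) therefore does not work. No AR-theoretic identification of simples recovers their number from the stable category in general; that is precisely the Auslander--Reiten conjecture. Also, \cite[Proposition 3.1]{A} computes the rank of the Cartan matrix; it is not a statement about simples. Only your ``alternative'' is valid, and the edge count it gives is what the remaining invariants depend on.

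Step 3 is circular: the genus is not known to be a stable invariant before $|V|$ and the number of faces are, so Euler's formula cannot produce $|V|$. The vertex count comes from the Cartan matrix instead. By \cite[Proposition 3.1]{A} (compare $C(A)=D'\mathrm{diag}(m_1,\dots,m_r)D$ in Lemma~\ref{lem-Cartan-matrix}), $\operatorname{rank}C(A)=|V(\Gamma)|-\varepsilon_\Gamma$, where $\varepsilon_\Gamma=1$ if $\Gamma$ is bipartite and $\varepsilon_\Gamma=0$ otherwise. The group $\mathrm{K}_0(A\text{-}\underline{\mathrm{mod}})\cong\operatorname{coker}C(A)$ has free rank $|E(\Gamma)|-\operatorname{rank}C(A)$. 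So a triangle equivalence together with $|E(\Gamma)|=|E(\Gamma')|$ forces equal Cartan ranks (Lemma~\ref{lem-invariant-factors}). Hence $|V|-\varepsilon$ is invariant, and $|V|$ and bipartivity are then separated by the remaining input from \cite{A}. The genus is an output of this, not an input.

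Likewise, in item 1 of Step 2 the tube--face correspondence only detects faces of perimeter $>2$ (Proposition~\ref{prop:stbA-tube-and-face}), and it says nothing in the representation-finite case. Faces of perimeter $1$ and $2$ give rank-one tubes that are indistinguishable, as translation quivers, from band tubes. The identity $\sum_F \mathrm{perimeter}(F)=2|E|$ recovers only $l_1+2l_2$, where $l_i$ is the number of faces of perimeter $i$. So the number of faces and the full multiset of perimeters also rest on \cite{A}, not on the argument you sketch.
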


The reason for us to add restrictions on $\Gamma$ and $\Gamma'$ in above Theorem is that we need to ensure that $\Gamma$ (resp. $\Gamma'$) is uniquely determined by $A$ (resp. $A'$) up to isomorphism, see Proposition \ref{prop-uniqueness-of-BG} and Example \ref{ex-isomorphic-BGAs}.

Using Theorem \ref{thm:ant's-stable-invariants}, \cite{AZ2} and \cite{OZ} provide a complete characterization of derived equivalences for Brauer graph algebras.

\begin{Thm}\textnormal{(\cite[Theorem 7.12]{OZ})}\label{thm:derived-eqv-between-BGAs}
Let $A$ and $A'$ be Brauer graph algebras over an algebraically closed field $k$, associated with Brauer graphs $\Gamma$ and $\Gamma'$ respectively, and suppose that $A$ and $A'$ are not local. Then the following statements are equivalent:
 \begin{enumerate}
	\item $A$ and $A'$ are derived equivalent;
	\item all of the following conditions are satisfied:
	\begin{itemize}
		\item $\Gamma$ and $\Gamma'$ have same number of vertices, edges and faces;
		\item $\Gamma$ and $\Gamma'$ have same multi-sets of perimeters of faces;
		\item $\Gamma$ and $\Gamma'$ have same multi-sets of multiplicities of vertices;
		\item either both or none of $\Gamma$ and $\Gamma'$ are bipartite.
	\end{itemize}
 \end{enumerate}
\end{Thm}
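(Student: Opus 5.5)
The equivalence splits into a necessity direction, which is largely formal given the earlier results, and a sufficiency direction, which needs explicit constructions.

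For $(1)\Rightarrow(2)$ I would use that Brauer graph algebras are symmetric. By Rickard's theorem, a derived equivalence between symmetric algebras is induced by a two-sided tilting complex and yields a stable equivalence of Morita type. In particular it yields a triangle equivalence $A\text{-}\underline{\mathrm{mod}}\simeq A'\text{-}\underline{\mathrm{mod}}$. Since $A$ and $A'$ are not local, neither $\Gamma$ nor $\Gamma'$ is a single loop or a single edge, so the exceptional cases are excluded. Theorem~\ref{thm:ant's-stable-invariants} then gives directly the equality of the numbers of vertices, edges and faces, of the multisets of face perimeters, and of bipartiteness. This leaves the multiset of vertex multiplicities.

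For the multiplicities I would use invariants that a derived equivalence preserves: the center $Z(A)$ and the Cartan matrix up to unimodular congruence. Proposition~\ref{prop:center-of-stBA} gives $\dim_k Z(A)$ in terms of the $m_i$, the number of faces of perimeter~$1$ and the number of edges. For the finer data I would use Corollary~\ref{cor-center-quotient}, where $Z(A)/R(A)$ is a local algebra whose structure records each $m_i>1$ through the nilpotency index $m_i$ of $X_i$, and the Reynolds ideal is a derived invariant. A subtlety is that $k$ may have characteristic~$2$ with a unique vertex, where Remark~\ref{rem:correction-to-AZ-multiplicities} warns that $Z/\mathrm{soc}(Z)$ is insufficient. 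There I would combine $\dim Z(A)$ with the elementary divisors of $C(A)=D'\mathrm{diag}(m_i)D$ from Lemma~\ref{lem-Cartan-matrix}. With a single vertex, $D$ is the row $(2,\dots,2)$, so $C(A)$ is $4m$ times the all-ones matrix and $m$ is recovered.

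For $(2)\Rightarrow(1)$ I would build derived equivalences explicitly using Kauer moves. A Kauer move detaches an end of an edge and slides it along the next edge in the cyclic order. It is realized by an Okuyama--Rickard tilting complex, namely a one-step mutation at the projective of that edge, so the resulting Brauer graph algebra is derived equivalent. The aim is to show that Kauer moves and their inverses bring any Brauer graph to a normal form depending only on the invariants in~(2). I would view $\Gamma$ as a ribbon graph embedded in a surface. The number of vertices, edges and faces then fixes the Euler characteristic, hence the genus; the face perimeters fix the boundary data; bipartiteness separates the remaining parity class. The normal form I would aim for is a fixed skeleton of $2g$ loops at one vertex carrying the genus, together with chains of edges realizing the prescribed face perimeters, and pendant vertices carrying the multiplicities greater than $1$.

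The main obstacle is this combinatorial reduction. One must show that Kauer moves act transitively on ribbon graphs with the same surface and face data, and that they can transport multiplicity-carrying vertices to arbitrary positions without changing any face perimeter. I would first handle the multiplicity-free case by induction on the number of edges, contracting to the normal form while tracking faces. I would then argue that moves of the pendant edges at vertices with $m>1$ commute with the reductions. The bipartite condition should enter only at the last step, when deciding whether an odd cycle can be created without altering perimeters.
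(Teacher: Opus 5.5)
The paper does not prove this statement; it quotes it from \cite{OZ}. Within the paper only the necessity half is visible, namely Rickard, then Theorem~\ref{thm:ant's-stable-invariants}, then multiplicity invariance (Proposition~\ref{thm-multi-sets-of-multiplicity}); that is essentially your route.

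\textbf{Main gap: sufficiency.} The genuine gap is $(2)\Rightarrow(1)$. That a Kauer move is a derived equivalence is fine. But the assertion that Kauer moves connect any two Brauer graphs with the same invariants \emph{is} the content of \cite{OZ}, and you only name it as ``the main obstacle''. Moreover, the normal form you aim for cannot work.
\begin{enumerate}
\item Kauer moves are derived equivalences, so they preserve every invariant in~(2), bipartiteness included. A skeleton of $2g$ loops is never bipartite. So, for example, the bipartite genus-one graph with two vertices joined by three edges and a single face of perimeter $6$ can never be moved to it.
\item Multiplicities $>1$ cannot in general sit on pendant vertices. A one-vertex graph has no pendant vertex. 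The only Brauer graph with two vertices, two edges and face perimeters $2,2$ is the double edge (a loop plus an edge has perimeters $1,3$), and it has no pendant vertex.
\item An induction on the number of edges does not mesh with Kauer moves, which preserve the number of edges. You would have to show that move sequences lift when an edge is added back, and that is exactly where the face and parity bookkeeping becomes hard.
\end{enumerate}
So the sufficiency direction is missing, not merely sketched.

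\textbf{Smaller point: necessity.} Invariance of the Reynolds ideal is a positive-characteristic result (via K\"ulshammer ideals; Proposition~\ref{prop:ZZ} in the Morita-type setting). In characteristic $0$ you give no reason why a derived equivalence should identify $Z(A)/R(A)$ with $Z(A')/R(A')$.

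Use instead $Z(A)/\operatorname{soc}(Z(A))$, which is intrinsic to the derived invariant $Z(A)$. By Proposition~\ref{remark-center-of-stBA}(3), for non-local $A$ outside the case $\operatorname{char}(k)=2$ with one vertex, it is $k[X_1,\dots,X_{r'}]/(X_i^{m_i},X_iX_j)$. Its radical layers have dimensions $\#\{i : m_i>j\}$, so it recovers the multiplicities $>1$, and the number of vertices recovers the rest.

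Since the number of vertices is invariant, both graphs fall into the exceptional one-vertex case simultaneously. There $\dim_k Z(A)=m+l+n$ determines $m$; your Cartan matrix, $4m$ times the all-ones matrix, does too.
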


Thus, by comparing Theorem \ref{thm:ant's-stable-invariants} and Theorem \ref{thm:derived-eqv-between-BGAs}, the key issue in determining whether two Brauer graph algebras which are stably equivalent of Morita type are derived equivalent, lies in examining whether their corresponding Brauer graphs have the same multi-sets of vertex multiplicities. The result below gives an affirmative answer to this question.

\begin{Prop}\label{thm-multi-sets-of-multiplicity}
Let $\Gamma$ and $\Gamma'$ be Brauer graphs, which are neither a loop with $1$ as the multiplicity of the unique vertex nor an edge with $2$ as the multiplicity of both vertices. Let $A=A_{\Gamma}$ and $A'=A_{\Gamma'}$ be the corresponding Brauer graph algebras. If $A$ and $A'$ are stably equivalent of
Morita type, then $\Gamma$ and $\Gamma'$ have the same multi-sets of multiplicities of vertices.
\end{Prop}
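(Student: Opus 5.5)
We split according to the characteristic of $k$ and use the center-type invariants established above. Throughout, $A$ and $A'$ are symmetric and indecomposable, since $\Gamma$ and $\Gamma'$ are connected. By Theorem~\ref{thm:ant's-stable-invariants}, $\Gamma$ and $\Gamma'$ have the same number $r$ of vertices. It therefore suffices to show that the multi-set $\{m_i \mid m_i>1\}$ is determined by invariants: the number of vertices with multiplicity one is then $r-r'$ on both sides.

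\emph{Positive characteristic.} By Proposition~\ref{prop:ZZ}, $Z(A)/R(A)\cong Z(A')/R(A')$. By Corollary~\ref{cor-center-quotient}, this algebra is
\[
k[X_1,\dots,X_{r'},Y_1,\dots,Y_l]/J .
\]
We read off the multiplicities intrinsically from this local algebra $L$ with maximal ideal $\mathfrak m$. The socle of $L$ is spanned by the $X_i^{m_i-1}$ and the $Y_p$. The multi-set $\{m_i\}$ can be recovered from the Loewy-type data $\dim \mathfrak m^t/\mathfrak m^{t+1}$ for $t\ge 2$, which equals $\#\{i : m_i>t\}$. The number of $m_i=2$ is harder, since those $X_i$ are indistinguishable from the $Y_p$ in $L$. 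We handle this by noting that $\dim_k L = 1+\sum_i (m_i-1)+l$. Also, $l$ equals the number of faces of perimeter $1$, which is preserved by Theorem~\ref{thm:ant's-stable-invariants}(2). Hence $\dim \mathfrak m/\mathfrak m^2 = r'+l$ determines $r'$. Combined with the counts $\#\{i: m_i>t\}$ for $t\ge 2$, this gives the number of $i$ with $m_i=2$, and hence the full multi-set.

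\emph{Characteristic zero.} Proposition~\ref{prop:ZZ} is unavailable, so we use Proposition~\ref{prop:P}: $Z^{\mathrm{st}}(A)\cong Z^{\mathrm{st}}(A')$, and hence their quotients by socles are isomorphic. By Proposition~\ref{stable center of BGA}, if $\Gamma$ is not multiplicity-free this quotient is
\[
k[X_1,\dots,X_{r'}]/(X_i^{m_i-1},X_iX_j).
\]
This recovers $\#\{i: m_i>t\}$ for $t\ge 2$ via $\dim\mathfrak m^{t-1}/\mathfrak m^{t}$, but again loses the vertices with $m_i=2$, since $X_i^{1}=0$ there. To recover $r'$, we compare dimensions. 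We have $\dim Z(A)=1+\sum_i(m_i-1)+l+n$, and by Lemma~\ref{lem-proj-center} together with the rank argument, $\dim Z^{\mathrm{pr}}(A)=\operatorname{rank} D$. For a connected graph, $\operatorname{rank} D$ is $r$ or $r-1$ according as $\Gamma$ is non-bipartite or bipartite (the incidence matrix argument over $\mathbb Q$). Since $n$, $r$, $l$ and bipartiteness are invariants by Theorem~\ref{thm:ant's-stable-invariants}, $\dim Z^{\mathrm{st}}(A)$ determines $\sum_i(m_i-1)$. The counts $\#\{i: m_i>t\}$ for $t\ge2$ determine $\sum_i \max(m_i-2,0)$. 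Subtracting these gives the number of $m_i\ge 2$, which is $r'$, and hence the number of $m_i=2$. The multiplicity-free cases are handled in the same way: the dimension count forces $\sum(m_i-1)=0$ on the other side as well.

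\emph{Main obstacle.} The delicate step is the precise bookkeeping of the socle-type quotients in the degenerate situations, namely a unique vertex in characteristic $2$ (Proposition~\ref{remark-center-of-stBA}(2)) and graphs where $Z^{\mathrm{st}}$ collapses. We also need to make sure that the invariants used in the dimension counts ($l$, $n$, $r$, bipartiteness) are genuinely available under the standing hypotheses. In positive characteristic, we plan to avoid Proposition~\ref{remark-center-of-stBA} entirely by working with $Z/R$, as above. If an exceptional small case slips through, it will be checked directly.
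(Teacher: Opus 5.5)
Your proposal is correct. In positive characteristic it is the paper's argument: Proposition~\ref{prop:ZZ} and Corollary~\ref{cor-center-quotient}, with $l=l'$ used to separate the vertices of multiplicity $2$ from the generators $Y_p$. Your Loewy-layer dimensions are just an intrinsic way of reading off $r_2+l$ and $r_i$ for $i>2$.

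In characteristic zero you diverge at the final step. The paper extracts the multiplicities $\geq 3$ from $Z^{\mathrm{st}}(A)/\operatorname{soc}(Z^{\mathrm{st}}(A))$ exactly as you do. It then obtains $\sum_i m_i=\sum_i m'_i$ from $\dim_k Z(A)=\dim_k Z(A')$, which it imports from \cite[Theorem 2]{AZ1} and \cite[Corollary 1.2]{LZZ}. You instead compare $\dim_k Z^{\mathrm{st}}$, which is already available from Proposition~\ref{prop:P}, and compute
\[
\dim_k Z^{\mathrm{st}}(A)=1+\sum_i(m_i-1)+l+n-\operatorname{rank}D .
\]
This uses Lemma~\ref{lem-proj-center} and the fact that $\operatorname{rank}_{\mathbb{Q}}D$ equals $r$ or $r-1$ according as $\Gamma$ is non-bipartite or bipartite. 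Since $n$, $r$, $l$ and bipartiteness are invariant by Theorem~\ref{thm:ant's-stable-invariants}, this determines $\sum_i(m_i-1)$. Your subtraction then gives the number of vertices of multiplicity $2$. This is sound.

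The trade-off is as follows. Your route stays within computations the paper already makes and avoids the external invariance of $\dim Z$. It does rely on the characteristic-zero identity $\operatorname{rank}_k C(A)=\operatorname{rank}_{\mathbb{Q}}D$ and on the bipartiteness invariant. You could also drop bipartiteness altogether by using the invariance of $\operatorname{rank}C(A)$ from Lemma~\ref{lem-invariant-factors}. The paper's route is shorter and uses a characteristic-free invariant.

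Finally, the degenerate cases you flag are not a real obstacle. Corollary~\ref{cor-center-quotient} holds uniformly in positive characteristic, including a single vertex in characteristic $2$. In characteristic zero the only degeneracy is $Z^{\mathrm{st}}/\operatorname{soc}(Z^{\mathrm{st}})\in\{k,0\}$, and your dimension count already handles it.
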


\begin{proof}
Suppose that $\Gamma$ (resp. $\Gamma'$) has $r$ (resp. $r'$) vertices $x_1,\cdots,x_r$ (resp. $x'_1,\cdots,x'_{r'}$) and $n$ (resp. $n'$) edges, where the multiplicity of the vertex $x_i$ (resp. $x'_i$) is $m_i$ (resp. $m'_i$). Moreover, suppose that the number of faces of perimeter $1$ of $\Gamma$ (resp. $\Gamma'$) is $l$ (resp. $l'$). Since $A$ and $A'$ are stably equivalent of Morita type, we have $r=r'$, $n=n'$ and $l=l'$.

\medskip
{\it Case 1: $\mathrm{char}(k)>0$.}

By Proposition \ref{prop:ZZ} we have an isomorphism of $k$-algebras $Z(A)/R(A)\cong Z(A')/R(A')$, where $R(A)$ (resp. $R(A')$) is the Reynolds ideal of $A$ (resp. $A'$). For each positive integer $i$, denote by $r_i$ (resp. $r'_i$) the number of vertices in $\Gamma$ (resp. $\Gamma'$) with multiplicity $i$. According to Corollary \ref{cor-center-quotient}, we have $r_2+l=r'_2+l'$ and $r_i=r'_i$ for each $i>2$. Since $l=l'$, we have $r_i=r'_i$ for each $i>1$. Since $r=r'$ and $r=\sum_{i\geq 1}r_i$ (resp. $r'=\sum_{i\geq 1}r'_i$), we also have $r_1=r'_1$. Therefore $\Gamma$ and $\Gamma'$ have the same multi-sets of multiplicities of vertices.

\medskip
{\it Case 2: $\mathrm{char}(k)=0$.}

Since $A$ and $A'$ are stably equivalent of Morita type, according to Proposition \ref{prop:P}, the $k$-algebras $Z^{\mathrm{st}}(A)$ and $Z^{\mathrm{st}}(A')$ are isomorphic. Then $Z^{\mathrm{st}}(A)/\mathrm{soc}(Z^{\mathrm{st}}(A))\cong Z^{\mathrm{st}}(A')/\mathrm{soc}(Z^{\mathrm{st}}(A'))$ as $k$-algebras. According to Proposition \ref{stable center of BGA}, the multi-sets of multiplicities of vertices which are larger or equal to $3$ of $\Gamma$ and $\Gamma'$ are the same. Since $\Gamma$ and $\Gamma'$ have the same number of vertices, to show that $\Gamma$ and $\Gamma'$ have the same multi-sets of multiplicities of vertices, it suffices to show that the sum of multiplicities of vertices of $\Gamma$ and $\Gamma'$ are equal.

According to \cite[Theorem 2]{AZ1} and \cite[Corollary 1.2]{LZZ}, the centers $Z(A)$ and $Z(A')$ have the same dimension. According to Proposition \ref{prop:center-of-stBA}, $$\mathrm{dim}_{k}(Z(A))=1+\sum_{i=1}^{r}m_i -r+l+n$$ and $$\mathrm{dim}_{k}(Z(A'))=1+\sum_{i=1}^{r'}m'_i -r'+l'+n'.$$ Since $r=r'$, $n=n'$ and $l=l'$, we have $\sum_{i=1}^{r}m_i=\sum_{i=1}^{r'}m'_i$.
\end{proof}

\begin{Thm}\label{cor-stm-der-BGA}
Two Brauer graph algebras are stably equivalent of Morita type if and only if they are derived equivalent.
\end{Thm}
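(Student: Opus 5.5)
The easy direction: if two Brauer graph algebras $A$ and $A'$ are derived equivalent, they are stably equivalent of Morita type. Brauer graph algebras are symmetric, hence self-injective. By Rickard's theorem, a derived equivalence between self-injective algebras induces a stable equivalence of Morita type. This gives the implication at once.

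For the converse, let $A=A_\Gamma$ and $A'=A_{\Gamma'}$ be stably equivalent of Morita type. I will first treat the case where neither algebra is local and neither graph is one of the two exceptional graphs (a loop with multiplicity one, or an edge with both multiplicities two).

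A stable equivalence of Morita type induces a triangle equivalence of the stable categories. So Theorem~\ref{thm:ant's-stable-invariants} applies and gives three of the four invariants in Theorem~\ref{thm:derived-eqv-between-BGAs}:
\begin{itemize}
\item the same numbers of vertices, edges and faces;
\item the same multi-sets of perimeters of faces;
\item bipartiteness, shared or absent in both.
\end{itemize}
Proposition~\ref{thm-multi-sets-of-multiplicity} supplies the fourth invariant, the multi-set of vertex multiplicities. Theorem~\ref{thm:derived-eqv-between-BGAs} then yields a derived equivalence. The same argument gives items (1)--(4) of Theorem~\ref{main-result}.

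The remaining work, which I expect to be the main obstacle, is the bookkeeping of boundary cases. Here I would rely on three facts:
\begin{itemize}
\item A stable equivalence of Morita type between indecomposable non-semisimple algebras preserves the number of non-projective simple modules, by the Auslander--Reiten conjecture for algebras stably equivalent to Brauer graph algebras \cite{AZ1}.
\item Brauer graph algebras have no simple projectives, except in the degenerate case where $A$ is $k$ itself.
\item Locality is therefore preserved: $A$ is local iff $A'$ is.
\end{itemize}

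\emph{Both algebras local.} Then $\Gamma$ and $\Gamma'$ each have a single edge. Such an edge is either a loop at one vertex with multiplicity $m$, or an edge with endpoint multiplicities $(a,b)$. So $A\cong k[x]/(x^{m})$, or $A\cong k[X,Y]/(XY, X^a-Y^b)$, or a loop algebra. For local symmetric algebras, stable equivalence of Morita type coincides with derived equivalence, which coincides with Morita equivalence, by Rickard and Zimmermann (derived equivalent local algebras are isomorphic). So I must show that stably Morita equivalent local Brauer graph algebras are isomorphic. I would use the following invariants:
\begin{itemize}
\item $\dim Z(A)$, which is invariant by \cite{AZ1,LZZ};
\item the stable center, by Proposition~\ref{prop:P};
\item $Z(A)/R(A)$ in positive characteristic, by Proposition~\ref{prop:ZZ};
\item the representation type (finite, Nakayama, or domestic/tame), together with the tube structure as in Lemma~\ref{lem-not-st-eq}.
\end{itemize}
With these I would separate the finitely many families case by case.

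\emph{Exceptional graphs in the non-local case.} A non-local algebra has at least two edges, and both exceptional graphs have a single edge, so they only occur in the local case. There, Example~\ref{ex-isomorphic-BGAs} and Lemma~\ref{lem-not-st-eq} handle the comparison of the loop and the doubled edge. In characteristic $\neq 2$ these two algebras are isomorphic. In characteristic $2$ they are not stably equivalent, so the statement holds vacuously for that pair.
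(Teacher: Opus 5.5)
\textbf{Verdict: the local case has a genuine gap.} Your easy direction and your non-local case match the paper's proof: Rickard's theorem, then Theorem~\ref{thm:ant's-stable-invariants}, Proposition~\ref{thm-multi-sets-of-multiplicity} and Theorem~\ref{thm:derived-eqv-between-BGAs}. Your remark that the two exceptional graphs have one edge, and so occur only in the local case, is also correct.

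In the local case you give a list of invariants and a promised case-by-case check, but you never carry it out. Two things are missing.
\begin{enumerate}
\item Example~\ref{ex-isomorphic-BGAs} and Lemma~\ref{lem-not-st-eq} only compare the two exceptional graphs with each other. You never show that an exceptional graph (say, the loop of multiplicity one) cannot be stably equivalent of Morita type to a non-exceptional local one: $k[X,Y]/(XY,X^a-Y^b)$ with $(a,b)\neq(2,2)$, the loop algebra with $m\geq 2$, or $k[x]/(x^n)$.
\item You never show that two non-exceptional one-edge Brauer graphs with stably Morita equivalent algebras are isomorphic.
\end{enumerate}
Separately, the sentence ``for local symmetric algebras, stable equivalence of Morita type coincides with derived equivalence'' is not a known theorem. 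What is available is that derived equivalence implies stable equivalence of Morita type (Rickard), and that derived equivalent local algebras are Morita equivalent. The remaining implication is exactly what you must prove here, so it cannot be cited.

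The paper closes the local case in two steps you could adopt.
\begin{itemize}
\item[(a)] Theorem~\ref{thm:ant's-stable-invariants} and Proposition~\ref{thm-multi-sets-of-multiplicity} exclude only the exceptional graphs; they do not assume non-locality. For one-edge graphs, the number of vertices separates a loop from an ordinary edge, and the multi-set of multiplicities then determines the Brauer graph up to isomorphism. So two non-exceptional local Brauer graph algebras that are stably equivalent of Morita type are isomorphic. You restricted your main argument to the non-local case unnecessarily.
\item[(b)] Among one-edge Brauer graphs, exactly the two exceptional ones give domestic algebras (Schroll, Theorem~5.1). Domesticity is detected by a $\mathbb{Z}\widetilde{A}_{p,q}$ component in the stable Auslander--Reiten quiver (Erdmann--Skowro\'nski), so it is preserved under stable equivalence. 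Hence an exceptional graph can only be matched with an exceptional one, and your use of Example~\ref{ex-isomorphic-BGAs} and Lemma~\ref{lem-not-st-eq} then finishes.
\end{itemize}

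The invariants you list ($\dim_k Z$, the stable center or $Z/R$, and representation-finiteness) could also do this separation by explicit computation over the families. As written, though, that computation is precisely what is missing.
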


\begin{proof}
According to \cite[Corollary 5.5]{Rickard1991}, two derived equivalent Brauer graph algebras are stably equivalent of Morita type.

Conversely, let $A$ and $A'$ be two Brauer graph algebras with associated Brauer graphs $\Gamma$ and $\Gamma'$ respectively, such that $A$ and $A'$ are stably equivalent of Morita type.

\medskip
{\it Case 1: $A$ is local.}

According to \cite[Theorem 2]{AZ1}, $A'$ is also local. Then both $\Gamma$ and $\Gamma'$ contain exactly one edge.

\medskip
{\it Case 1.1: $\Gamma$ is a loop with $1$ as the multiplicity of the unique vertex, or an edge with $2$ as the multiplicity of both vertices.}

It follows from \cite[Theorem 5.1]{S} that $A$ is domestic. According to \cite[Theorem 2.1]{ES}, its stable AR-quiver $\prescript{}{s}{\Gamma}_{A}$ contains a component of the form $\mathbb{Z}\widetilde{A}_{p,q}$. Since $A$ and $A'$ are stably equivalent, the stable AR-quiver $\prescript{}{s}{\Gamma}_{A'}$ of $A'$ also contains such a component. Therefore $A'$ is also domestic, which again follows from \cite[Theorem 2.1]{ES}. Since $\Gamma'$ contains exactly one edge, according to \cite[Theorem 5.1]{S} $\Gamma'$ is either a loop with $1$ as the multiplicity of the unique vertex or an edge with $2$ as the multiplicity of both vertices. If $\mathrm{ckar}(k)=2$, it follows from Lemma \ref{lem-not-st-eq} that $\Gamma$ and $\Gamma'$ are isomorphic, and therefore $A\cong A'$. If $\mathrm{ckar}(k)\neq 2$, by Example \ref{ex-isomorphic-BGAs} we also have $A\cong A'$. Therefore $A$ and $A'$ are derived equivalent.

\medskip
{\it Case 1.2: $\Gamma$ is neither a loop with $1$ as the multiplicity of the unique vertex nor an edge with $2$ as the multiplicity of both vertices.}

By the discussions in Case 1.1, we see that $\Gamma'$ is neither a loop with $1$ as the multiplicity of the unique vertex nor an edge with $2$ as the multiplicity of both vertices. By Theorem \ref{thm:ant's-stable-invariants} and Proposition \ref{thm-multi-sets-of-multiplicity}, it can be shown that $\Gamma$ and $\Gamma'$, and therefore $A$ and $A'$, are isomorphic. So $A$ and $A'$ are derived equivalent.

\medskip
{\it Case 2: $A$ is not local.}

By \cite[Theorem 2]{AZ1}, $A'$ is also not local. By Theorems \ref{thm:ant's-stable-invariants}, \ref{thm:derived-eqv-between-BGAs} and Proposition \ref{thm-multi-sets-of-multiplicity}, $A$ and $A'$ are derived equivalent.
\end{proof}

Combining Theorem \ref{cor-stm-der-BGA} and \cite[Theorem 4.4]{CLLX} (or Theorem \ref{thm:st.M-BGA}), we have

\begin{Cor}\label{cor:stm-implies-derived-equ}
Let $A$ and $B$ be two algebras without semisimple direct summands which are stably equivalent of Morita type. If $A$ is a Brauer graph algebra, then $A$ and $B$ are derived equivalent.
\end{Cor}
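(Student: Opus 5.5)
The plan is to reduce the corollary to Theorem~\ref{cor-stm-der-BGA} by first showing that $B$ is itself a Brauer graph algebra. Then the equivalence ``stably equivalent of Morita type $\Leftrightarrow$ derived equivalent'' for Brauer graph algebras finishes the argument.

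\emph{Step 1: decompose.} Write $A=\prod_i A_i$ with each $A_i$ an indecomposable Brauer graph algebra. A disconnected Brauer graph gives a product of Brauer graph algebras, and none of the $A_i$ is semisimple. A stable equivalence of Morita type between algebras without semisimple summands matches indecomposable non-semisimple blocks bijectively; the bimodules decompose along central idempotents. Hence $B=\prod_i B_i$ with $A_i$ and $B_i$ stably equivalent of Morita type. It therefore suffices to treat indecomposable $A$ and $B$.

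\emph{Step 2: $B$ is a Brauer graph algebra.} Here I invoke \cite[Theorem 4.4]{CLLX} (equivalently Theorem~\ref{thm:st.M-BGA} of the appendix). It says that an algebra stably equivalent of Morita type to a Brauer graph algebra is a Brauer graph algebra up to semisimple summands. Since $B$ has no semisimple summands, $B\cong A_{\Gamma'}$ for some Brauer graph $\Gamma'$.

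\emph{Step 3: conclude.} Now $A$ and $B$ are both Brauer graph algebras that are stably equivalent of Morita type. Theorem~\ref{cor-stm-der-BGA} gives that $A$ and $B$ are derived equivalent. In the decomposable case, derived equivalences of the blocks assemble into a derived equivalence of the products.

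The main obstacle is Step 2, the closedness result. Everything else follows formally from results already in the paper. Step 1 requires only the standard observation that stable equivalences of Morita type between algebras without semisimple summands respect block decompositions.
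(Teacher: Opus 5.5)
Your argument is correct and is essentially the paper's: the corollary is obtained by combining \cite[Theorem 4.4]{CLLX} (Theorem~\ref{thm:st.M-BGA}) with Theorem~\ref{cor-stm-der-BGA}, and your Step 1 is unnecessary because the paper assumes Brauer graphs are connected, so $A$ is already indecomposable. One small correction: Theorem~\ref{thm:st.M-BGA} is stated for basic $B$, so in general you only get that $B$ is Morita equivalent to some $A_{\Gamma'}$, not isomorphic to it; this is harmless, since a Morita equivalence is both a stable equivalence of Morita type and a derived equivalence.
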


\section{Some further remarks}\label{sec:rmk}

In this section, following some idea from \cite{CKL} we discuss the liftability of simple-images of Morita type over a Brauer graph algebra $A$, and determine the orbits of simple-images of Morita type under the action of stable Picard group $\mathrm{StPic}(A)$.

Let $A$ be an algebra. The set $\mathrm{StPic}(A)$ of natural isomorphism classes $[\phi]$ of stable auto-equivalence of Morita type $\phi:A\text{-}\underline{\mathrm{mod}}\rightarrow A\text{-}\underline{\mathrm{mod}}$
forms a group under the composition of functors, which is called {\it the stable Picard group of $A$} (see \cite[Section 2]{CKL}).

By \cite{CKL}, a set of objects $\mathcal{S}$ in $A\text{-}\underline{\mathrm{mod}}$ is called a {\it simple-image of Morita type} over $A$ if there exists an algebra $B$ together with a stable equivalence of Morita type $F:B\text{-}\underline{\mathrm{mod}}\rightarrow A\text{-}\underline{\mathrm{mod}}$ which maps a complete set of representatives of non-isomorphic non-projective simple $B$-modules to $\mathcal{S}$ (we identify two simple-images of Morita type over $A$ if there exists a bijection between them such that the corresponding objects are isomorphic). If $A$ is a self-injective algebra without semisimple summands, according to \cite[Proposition 2.7]{CKL}, there exists a bijection between the Morita equivalence classes of algebras without semisimple summands which are stably equivalent of Morita type to $A$ and the orbits of simple-images of Morita type over $A$ under the action of $\mathrm{StPic}(A)$.

\begin{Def}\textnormal{(see \cite{CKL})}\label{def:liftable-sms}
Let $A$ be a self-injective algebra. A simple-image of Morita type $\mathcal{S}$ in $A\text{-}\underline{\mathrm{mod}}$ is said to be liftable if there exists an algebra $B$ together with a stable equivalence of Morita type $F:B\text{-}\underline{\mathrm{mod}}\rightarrow A\text{-}\underline{\mathrm{mod}}$ which can be lifted to a derived equivalence such that $F$ maps a complete set of representatives of non-isomorphic non-projective simple $B$-modules to $\mathcal{S}$.
\end{Def}

It is asked in \cite{CKL} that for which kind of algebra $A$, every simple-image of Morita type in $A\text{-}\underline{\mathrm{mod}}$ is liftable?  According to \cite[Theorem 4.1]{CKL}, this question has a positive answer for any indecomposable non simple representation-finite self-injective algebra. Since the positive answer puts a strong restriction on a given algebra, it seems useful to present a variant problem about the liftablilty of simple-images of Morita type as follows.

\begin{Prob}\label{prob}
Let $A$ be a self-injective algebra. When each $\mathrm{StPic}(A)$-orbit of simple-images of Morita type in $A\text{-}\underline{\mathrm{mod}}$ contains a liftable representative?
\end{Prob}

If $A$ is indecomposable and non simple, then the following lemma gives an equivalent characterization of above problem.

\begin{Lem}\label{lem:an-equivalent-characterization}
Let $A$ be an indecomposable non simple self-injective algebra.
\begin{itemize}
\item[$(1)$] Each simple-image of Morita type in $A\text{-}\underline{\mathrm{mod}}$ is liftable if and only if for each indecomposable non simple algebra $B$ and each stable equivalence of Morita type $F:B\text{-}\underline{\mathrm{mod}}\rightarrow A\text{-}\underline{\mathrm{mod}}$, $F$ can be lifted to a derived equivalence.
\item[$(2)$] Each $\mathrm{StPic}(A)$-orbit of simple-images of Morita type in $A\text{-}\underline{\mathrm{mod}}$ contains a liftable representative if and only if each indecomposable non simple algebra which is stably equivalent of Morita type to $A$ is also derived equivalent to $A$.
\end{itemize}
\end{Lem}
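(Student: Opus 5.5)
For (1) I will prove the two directions separately. For the ``if'' direction, let $\mathcal{S}$ be a simple-image of Morita type over $A$. By definition there is an algebra $B$ and a stable equivalence of Morita type $F:B\text{-}\underline{\mathrm{mod}}\to A\text{-}\underline{\mathrm{mod}}$ sending the non-projective simples of $B$ to $\mathcal{S}$. I first reduce to the case where $B$ is indecomposable and non simple. Semisimple blocks of $B$ contribute no non-projective simples, and the stable category of $B\text{-}\underline{\mathrm{mod}}$ is the same after they are removed. Removing them gives a $B_0$, and the restricted bimodules still give a stable equivalence of Morita type with the same image of the simples.

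Next I show $B_0$ is indecomposable. I will use that a stable equivalence of Morita type between algebras without semisimple summands respects the block decomposition: by Liu's results it induces a bijection of blocks, since $M\otimes_B -$ decomposes along central idempotents. Since $A$ is indecomposable and non simple, $B_0$ must be indecomposable and non simple. The hypothesis then lifts $F$ to a derived equivalence, so $\mathcal{S}$ is liftable.

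For the ``only if'' direction, let $B$ be indecomposable non simple and $F$ given. The images of the simple $B$-modules form a simple-image $\mathcal{S}$, which is liftable by hypothesis. So there is $G:C\text{-}\underline{\mathrm{mod}}\to A\text{-}\underline{\mathrm{mod}}$, lifted from a derived equivalence, with the simples of $C$ also mapping to $\mathcal{S}$. Then $G^{-1}F:B\text{-}\underline{\mathrm{mod}}\to C\text{-}\underline{\mathrm{mod}}$ is a stable equivalence of Morita type sending simples to simples. By the Linckelmann-type result (as used in \cite{CKL}), it is induced by a Morita equivalence, hence lifts. Therefore $F\cong G\circ(G^{-1}F)$ lifts as well. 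This step assumes $C$, like $B$, has no semisimple summands, which I arrange by discarding semisimple blocks as above. It also needs $B$ and $C$ self-injective, which holds because they are stably equivalent of Morita type to the self-injective $A$ and have no semisimple summands.

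For (2) I use the bijection of \cite[Proposition 2.7]{CKL}. It matches $\mathrm{StPic}(A)$-orbits of simple-images with Morita classes of algebras $B$ without semisimple summands that are stably equivalent of Morita type to $A$. By the block argument, such a $B$ is indecomposable and non simple.

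Suppose an orbit contains a liftable representative $\mathcal{S}=F(\text{simples})$ with $F$ lifting. Then the corresponding $B$ is derived equivalent to $A$, and every algebra arises this way. Conversely, suppose $B$ is derived equivalent to $A$. By Rickard, a derived equivalence induces a stable equivalence of Morita type $F$, which is liftable. The image $F(\text{simples of }B)$ is then a liftable simple-image in the orbit corresponding to $B$.

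The main obstacle I expect is the rigidity step in (1): a stable equivalence of Morita type that maps simples to simples comes from a Morita equivalence. I would cite Linckelmann's theorem for indecomposable non simple self-injective algebras, which is exactly why both the indecomposability reduction and the self-injectivity of $B$ and $C$ are needed.
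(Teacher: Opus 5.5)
Your proposal is correct and follows the paper's argument. In (1) you reduce to an indecomposable non-simple self-injective source algebra via Liu's results and apply Linckelmann's theorem to $G^{-1}F$, exactly as the paper does; in (2) you quote the bijection of \cite[Proposition 2.7]{CKL}, whereas the paper unpacks it by hand (applying Linckelmann to $F^{-1}H^{-1}G$, resp.\ taking $H=GF^{-1}$ with $G$ induced by a standard derived equivalence), but the well-definedness of that bijection is precisely this argument, so the content is the same.
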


\begin{proof}
For an algebra $B$, we denote by $\mathcal{S}_{B}$ a complete set of representatives of non-isomorphic non-projective simple $B$-modules.

Suppose that each simple-image of Morita type in $A\text{-}\underline{\mathrm{mod}}$ is liftable. For an indecomposable non simple algebra $B$ together with a stable equivalence of Morita type $F:B\text{-}\underline{\mathrm{mod}}\rightarrow A\text{-}\underline{\mathrm{mod}}$, we denote by $\mathcal{S}=F(\mathcal{S}_{B})$. Since $\mathcal{S}$ is liftable, there exists an algebra $C$ together with a stable equivalence of Morita type $G:C\text{-}\underline{\mathrm{mod}}\rightarrow A\text{-}\underline{\mathrm{mod}}$ which can be lifted to a derived equivalence such that $G$ maps $\mathcal{S}_{C}$ to $\mathcal{S}$. We may assume that $C$ contains no semisimple direct summands. By \cite[Proposition 2.1 and Corollary 2.3]{L}, $C$ is indecomposable and self-injective. Note that $B$ is also self-injective, which again follows from \cite[Corollary 2.3]{L}. The composition $G^{-1}F:B\text{-}\underline{\mathrm{mod}}\rightarrow C\text{-}\underline{\mathrm{mod}}$ is a stable equivalence of Morita type which maps simples to simples. According to \cite[Theorem 2.1]{Linckelmann1996}, $G^{-1}F$ can be lifted to a Morita equivalence. Then $F=G(G^{-1}F)$ can be lifted to a derived equivalence.

Conversely, suppose that for each indecomposable non simple algebra $B$ and each stable equivalence of Morita type $F:B\text{-}\underline{\mathrm{mod}}\rightarrow A\text{-}\underline{\mathrm{mod}}$, $F$ can be lifted to a derived equivalence. Let $\mathcal{S}$ be a simple-image of Morita type in $A\text{-}\underline{\mathrm{mod}}$, which is the image of $\mathcal{S}_{C}$ under a stable equivalence of Morita type $G:C\text{-}\underline{\mathrm{mod}}\rightarrow A\text{-}\underline{\mathrm{mod}}$. We may assume that $C$ contains no semisimple direct summands. By \cite[Proposition 2.1 and Corollary 2.3]{L}, $C$ is indecomposable and self-injective. Then $G$ can be lifted to a derived equivalence and $\mathcal{S}$ is liftable. This shows $(1)$.

Suppose that each $\mathrm{StPic}(A)$-orbit of simple-images of Morita type in $A\text{-}\underline{\mathrm{mod}}$ contains a liftable representative. Let $B$ be an indecomposable non simple algebra which is stably equivalent of Morita type to $A$. Choose a stable equivalence of Morita type $F:B\text{-}\underline{\mathrm{mod}}\rightarrow A\text{-}\underline{\mathrm{mod}}$, and denote by $\mathcal{S}=F(\mathcal{S}_{B})$. There exists a stable auto-equivalence of Morita type $H$ of $A$ such that $H(\mathcal{S})$ is liftable. Then there exists an algebra $C$ together with a stable equivalence of Morita type $G:C\text{-}\underline{\mathrm{mod}}\rightarrow A\text{-}\underline{\mathrm{mod}}$ which can be lifted to a derived equivalence such that $G$ maps $\mathcal{S}_{C}$ to $H(\mathcal{S})$. We may assume that $C$ is indecomposable and self-injective. Since $F^{-1}H^{-1}G:C\text{-}\underline{\mathrm{mod}}\rightarrow B\text{-}\underline{\mathrm{mod}}$ is a stable equivalence of Morita type mapping simples to simples, it can be lifted to a Morita equivalence. Then $B$ is derived equivalent to $A$.

Conversely, suppose that each indecomposable non simple algebra which is stably equivalent of Morita type to $A$ is also derived equivalent to $A$. Let $\mathcal{S}$ be a simple-image of Morita type in $A\text{-}\underline{\mathrm{mod}}$. Choose a stable equivalence of Morita type $F:B\text{-}\underline{\mathrm{mod}}\rightarrow A\text{-}\underline{\mathrm{mod}}$ which maps $\mathcal{S}_{B}$ to $\mathcal{S}$. We may suppose that $B$ is indecomposable and self-injective. Therefore $B$ is derived equivalent to $A$, and we can choose a standard derived equivalence $\phi$ from $B$ to $A$, which induces a stable equivalence of Morita type $G:B\text{-}\underline{\mathrm{mod}}\rightarrow A\text{-}\underline{\mathrm{mod}}$. Denote by $H$ the stable auto-equivalence of Morita type $GF^{-1}$ of $A$. Then $H(\mathcal{S})=G(\mathcal{S}_{B})$ is a liftable simple-image of Morita type. This shows $(2)$.
\end{proof}

Combining Corollary \ref{cor:stm-implies-derived-equ} and Lemma \ref{lem:an-equivalent-characterization}, we have

\begin{Prop}\label{prop:orbit-contains-liftable}
If $A$ is a Brauer graph algebra, then each $\mathrm{StPic}(A)$-orbit of simple-images of Morita type in $A\text{-}\underline{\mathrm{mod}}$ contains a liftable representative.
\end{Prop}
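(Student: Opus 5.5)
The plan is to reduce the statement to criterion $(2)$ of Lemma~\ref{lem:an-equivalent-characterization}, and then verify that criterion using Corollary~\ref{cor:stm-implies-derived-equ}.

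First, the hypotheses of the lemma have to be checked. A Brauer graph algebra $A=A_{\Gamma}$ is symmetric, hence self-injective. Since $\Gamma$ is assumed connected, $A$ is indecomposable. It is also non simple, because every Brauer graph has at least one edge, so $A$ has nonzero radical: even in the degenerate case of a single edge with two truncated vertices and $m=\textbf{1}$ we get $A\cong k[\alpha]/(\alpha^2)$. (If one wants to allow disconnected $\Gamma$, one should first reduce to blocks. I will not pursue this, since the paper's standing convention is that $\Gamma$ is connected.)

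By Lemma~\ref{lem:an-equivalent-characterization}(2), it then suffices to show the following: every indecomposable non simple algebra $B$ that is stably equivalent of Morita type to $A$ is derived equivalent to $A$. Such a $B$ is indecomposable and not simple, so it has no semisimple direct summands. The same holds for $A$. Corollary~\ref{cor:stm-implies-derived-equ} applies to exactly this situation: $A$ and $B$ have no semisimple summands, they are stably equivalent of Morita type, and $A$ is a Brauer graph algebra. It yields that $A$ and $B$ are derived equivalent. Feeding this back into Lemma~\ref{lem:an-equivalent-characterization}(2) proves the proposition.

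Corollary~\ref{cor:stm-implies-derived-equ} is applied with $A$ as the Brauer graph algebra and $B$ as the other algebra. This matches the direction of the stable equivalence, since stable equivalence of Morita type is symmetric in the two algebras.

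The main point requiring care is the non-simplicity and indecomposability bookkeeping. That is, one has to make sure the class of algebras $B$ quantified over in Lemma~\ref{lem:an-equivalent-characterization}(2) falls under the hypotheses of Corollary~\ref{cor:stm-implies-derived-equ}. This holds because an indecomposable non simple algebra has no semisimple summand. The substantive content is entirely in Corollary~\ref{cor:stm-implies-derived-equ}. That corollary rests on Theorem~\ref{cor-stm-der-BGA} and on the closure result \cite[Theorem 4.4]{CLLX}, which says that $B$ is itself a Brauer graph algebra up to semisimple summands.
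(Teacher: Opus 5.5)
Your proposal is correct and follows the paper's own argument: the paper obtains Proposition~\ref{prop:orbit-contains-liftable} simply by combining Corollary~\ref{cor:stm-implies-derived-equ} with Lemma~\ref{lem:an-equivalent-characterization}(2). Your extra checks are the bookkeeping the paper leaves implicit, namely that $A$ is indecomposable, non simple and self-injective, and that an indecomposable non simple $B$ has no semisimple summands.
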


\begin{Rem1}
\begin{itemize}
\item[$(1)$] In \cite[Example 6.4]{LL}, it shows that there exists a stable auto-equivalence of Morita type over some Brauer graph algebra that is not lifted to a derived equivalence. Then according to Lemma \ref{lem:an-equivalent-characterization}, there exists a simple-image of Morita type over a Brauer graph algebra that is not liftable.
\item[$(2)$] According to \cite[Remark 2.10(3)]{CKL}, there exists some self-injective $A$ such that Problem \ref{prob} has a negative answer.
\end{itemize}
\end{Rem1}

When $A=A_{\Gamma}$ is a Brauer graph algebra, it follows from \cite[Theorem 4.4]{CLLX} (or Theorem \ref{thm:st.M-BGA}) that each algebra $B$ without semisimple summands which is stably equivalent of Morita type to $A$ is Morita equivalent to a Brauer graph algebra $A_{\Gamma'}$. By Theorem \ref{cor-stm-der-BGA}, $A_{\Gamma'}$ is derived equivalent to $A$. According to Proposition \ref{prop-uniqueness-of-BG}, two Brauer graph algebras $A_{\Gamma_1}$, $A_{\Gamma_2}$ are isomorphic if and only if the associated Brauer graphs $\Gamma_1$, $\Gamma_2$ are isomorphic, except when $\Gamma_1$ is a loop with $1$ as the multiplicity of the unique vertex or an edge with $2$ as the multiplicity of both vertices. Therefore we have

\begin{Prop}\label{prop:classification-of-orbits}
Let $A=A_{\Gamma}$ be a Brauer graph algebra. If $\Gamma$ is neither a loop with $1$ as the multiplicity of the unique vertex nor an edge with $2$ as the multiplicity of both vertices, then the orbits of simple-images of Morita type over $A$ under $\mathrm{StPic}(A)$ are one-to-one correspond with the isomorphism classes of Brauer graphs $\Gamma'$ such that $A_{\Gamma'}$ and $A_{\Gamma}$ are derived equivalent.
\end{Prop}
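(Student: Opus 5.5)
Using \cite[Proposition 2.7]{CKL}, I will first identify the set of $\mathrm{StPic}(A)$-orbits of simple-images of Morita type over $A$ with the set $\mathcal{M}$ of Morita equivalence classes of algebras $B$ without semisimple summands that are stably equivalent of Morita type to $A$. This applies because $A=A_\Gamma$ is symmetric, hence self-injective. Since $\Gamma$ is connected and $A$ is not simple, $A$ has no semisimple summands. It then suffices to construct a bijection between $\mathcal{M}$ and the set $\mathcal{G}$ of isomorphism classes of Brauer graphs $\Gamma'$ with $A_{\Gamma'}$ derived equivalent to $A_\Gamma$.

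To define $\Psi:\mathcal{G}\to\mathcal{M}$, I will send $[\Gamma']$ to the Morita class of $A_{\Gamma'}$. This is well defined because isomorphic Brauer graphs give isomorphic algebras. The image lies in $\mathcal{M}$ because $A_{\Gamma'}$ is indecomposable and non-simple, hence has no semisimple summands, and because Theorem~\ref{cor-stm-der-BGA} turns the derived equivalence into a stable equivalence of Morita type. For surjectivity, take $B$ representing a class in $\mathcal{M}$. By \cite[Theorem 4.4]{CLLX} (or Theorem~\ref{thm:st.M-BGA}), $B$ is Morita equivalent to some Brauer graph algebra $A_{\Gamma'}$. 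Stable equivalence of Morita type is preserved under Morita equivalence, so $A_{\Gamma'}$ is stably equivalent of Morita type to $A$. Theorem~\ref{cor-stm-der-BGA} then shows that $A_{\Gamma'}$ is derived equivalent to $A$, so $[\Gamma']\in\mathcal{G}$ and $\Psi([\Gamma'])=[B]$.

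For injectivity, suppose $A_{\Gamma_1}$ and $A_{\Gamma_2}$ are Morita equivalent with $[\Gamma_1],[\Gamma_2]\in\mathcal{G}$. Brauer graph algebras are basic, since the projectives $P_j$ are indexed by the edges and are pairwise non-isomorphic. Hence Morita equivalence implies $A_{\Gamma_1}\cong A_{\Gamma_2}$. To conclude $\Gamma_1\cong\Gamma_2$ via Proposition~\ref{prop-uniqueness-of-BG}, I need $\Gamma_1$ to avoid the two exceptional graphs.

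This exclusion is the main obstacle. The hypothesis only excludes $\Gamma$, not the graphs $\Gamma_i$ in its derived class. I expect to handle it through the local case of Theorem~\ref{cor-stm-der-BGA}. Suppose $\Gamma_1$ were a multiplicity-one loop or an edge with both multiplicities $2$. Then $A_{\Gamma_1}$ is local and domestic. Derived equivalence preserves the number of simples, so $A$ is local, and it is stably equivalent to $A_{\Gamma_1}$. The argument of Case~1.1 then forces $\Gamma$ itself to be one of the two exceptional graphs, which contradicts the hypothesis. Hence neither $\Gamma_1$ nor $\Gamma_2$ is exceptional, Proposition~\ref{prop-uniqueness-of-BG} gives $\Gamma_1\cong\Gamma_2$, and $\Psi$ is a bijection.
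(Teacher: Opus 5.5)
Your proposal is correct and follows the same route as the paper. The paper also uses the bijection of \cite[Proposition 2.7]{CKL} between orbits and Morita classes, then \cite[Theorem 4.4]{CLLX} with Theorem~\ref{cor-stm-der-BGA} to reach Brauer graph algebras derived equivalent to $A$, and Proposition~\ref{prop-uniqueness-of-BG} for injectivity. Your explicit check, via the local domestic argument of Case~1.1, that no Brauer graph in the derived class of $\Gamma$ is one of the two exceptional graphs fills in a step the paper leaves implicit when it invokes Proposition~\ref{prop-uniqueness-of-BG}.
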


Let $A_{\Gamma_1},A_{\Gamma_2},\cdots,A_{\Gamma_r}$ be all the Brauer graph algebras which are derived equivalent to $A$, and let $F_i:A_{\Gamma_i}\text{-}\underline{\mathrm{mod}}\rightarrow A\text{-}\underline{\mathrm{mod}}$ be a stable equivalence which lifts to a derived equivalence for each $i=1,2,\cdots,r$. Then $\{F_1(\mathcal{S}_1),\cdots,F_r(\mathcal{S}_r)\}$ forms a complete set of representatives of simple-images of Morita type over $A$ which belongs to different $\mathrm{StPic}(A)$-orbits, where $\mathcal{S}_i$ is a complete set of representatives of non-isomorphic simple $A_{\Gamma_i}$-module. We will illustrate this observation by the following example.

\begin{Ex1}
Let $A=A_{\Gamma}$ be a Brauer graph algebra given by the following Brauer graph
$$\begin{tikzpicture}
\draw (0,0) circle (0.5);
\fill (0.5,0) circle (0.5ex);
\draw (0.5,0) -- (1.5,0);
\fill (1.5,0) circle (0.5ex);
\node at(1.85,0) {$2$};
\draw (1.7,-0.15) rectangle (2,0.15);
\node at(-0.7,0) {$1$};
\node at(1,0.2) {$2$};
\end{tikzpicture}.$$
Then $A$ is given by the quiver
$$\begin{tikzpicture}
\node at(0,0) {$1$};
\node at(2,0) {$2$};
\draw[->] (0.2,0.2) -- (1.8,0.2);
\draw[->] (1.8,-0.2) -- (0.2,-0.2);
\draw[->] (-0.2,0.1) arc (15:345:0.5);
\draw[->] (2.2,-0.1) arc (195:525:0.5);
\node at(-1.5,0) {$\alpha$};
\node at(3.5,0) {$\gamma$};
\node at(1,0.4) {$\beta$};
\node at(1,-0.4) {$\delta$};
\end{tikzpicture}$$
with relations $\alpha^2=\gamma\beta=\delta\gamma=\beta\delta=0$, $\delta\beta\alpha=\alpha\delta\beta$, $\gamma^2=\beta\alpha\delta$.  The indecomposable projective (left) $A$-modules have the following structures
\begin{center}
\tikzset{every picture/.style={line width=0.75pt}}
\begin{tikzpicture}
\node at(-1.7,1.5) {$P_1=$};
\node at(0,3) {$1$};
\node at(-1,2) {$1$};
\node at(1,2) {$2$};
\node at(-1,1) {$2$};
\node at(1,1) {$1$};
\node at(0,0) {$2$};
\draw    (-0.2,0.2) -- (-0.8,0.8) ;
\draw    (0.2,0.2) -- (0.8,0.8) ;
\draw    (-1,1.25) -- (-1,1.75) ;
\draw    (1,1.25) -- (1,1.75) ;
\draw    (-0.8,2.2) -- (-0.2,2.8) ;
\draw    (0.8,2.2) -- (0.2,2.8) ;
\node at(2.3,1.5) {$P_2=$};
\node at(4,0) {$2$};
\node at(3,1) {$1$};
\node at(3,2) {$1$};
\node at(4,3) {$2$};
\node at(5,1.5) {$2$};
\draw    (3.2,2.2) -- (3.8,2.8) ;
\draw    (3,1.25) -- (3,1.75) ;
\draw    (3.8,0.2) -- (3.2,0.8) ;
\draw    (4.13,2.8) -- (4.87,1.7) ;
\draw    (4.13,0.2) -- (4.87,1.3) ;
\end{tikzpicture}.
\end{center}

Denote by $T_1$ the complex
$$0\rightarrow P_2\oplus P_2\xrightarrow{(\beta,\beta\alpha)} P_1\rightarrow 0,$$
where $P_1$ is the degree $0$ term of this complex, and denote by $T_2=P_2[1]$. Then $T=T_1\oplus T_2$ is an Okuyama tilting complex and $B=\mathrm{End}_{K^b(A)}(T)^{op}$ is a Brauer graph algebra given by the following Brauer graph
$$\begin{tikzpicture}
\draw (2,0) circle (0.5);
\fill (0.5,0) circle (0.5ex);
\draw (0.5,0) -- (1.5,0);
\fill (1.5,0) circle (0.5ex);
\node at(1.85,0) {$2$};
\draw (1.7,-0.15) rectangle (2,0.15);
\node at(2.7,0) {$1$};
\node at(1,0.2) {$2$};
\end{tikzpicture}.$$
According to Theorem \ref{thm:derived-eqv-between-BGAs}, $A$ and $B$ are the only Brauer graph algebras which are derived equivalent to $A$. This also follows from the fact that $A$ is tilting-discrete; see~\cite{AAC}.

Denote by $F:B\text{-}\underline{\mathrm{mod}}\rightarrow A\text{-}\underline{\mathrm{mod}}$ the induced stable equivalence of the derived equivalence $D^b(B\text{-}\mathrm{mod})\rightarrow D^b(A\text{-}\mathrm{mod})$ which is induced by the tilting complex $T$. According to Okuyama's Lemma (see \cite[Section 5]{Dugas2015}), $F$ takes the set of simple $B$-modules to the set
$$\begin{tikzpicture}
\node at(-1,0) {$\mathcal{S}'=$};
\node[rotate = 270] at (-0.2,0) {$\underbrace{\hspace{1cm}}$};
\node at(0,0) {$1$};
\node at(0.2,-0.2) {,};
\node at(0.5,0.2) {$2$};
\node at(0.5,-0.2) {$2$};
\node[rotate = 90] at (0.8,0) {$\underbrace{\hspace{1cm}}$};
\node at(1.2,-0.4) {.};
\end{tikzpicture}$$
Denote by $\mathcal{S}$ the set of simple $A$-modules. Then $\{\mathcal{S},\mathcal{S}'\}$ forms a complete set of representatives for the orbits of the action of $\mathrm{StPic}(A)$ on the set of simple-images of Morita type over $A$. 
\end{Ex1}

\newpage
\appendix
\section{A new proof of the fact that Brauer graph algebras are closed under stable equivalence of Morita type}\label{sec-appendix}

In this appendix we will give another proof of the proposition that Brauer graph algebras are closed under stable equivalence of Morita type by using invariants. Note that there has been a proof in \cite{CLLX} but here we use a different method and the proof is more straightforward. Throughout this appendix we fix $k$ to be an algebraically closed field.

\begin{definition}\textnormal{(\cite[Definition 1]{AZ1})}
		Let $Q$ be a quiver and $I$ an admissible ideal of $kQ$. A self-injective algebra $A$ is said to be stably biserial if $A$ is isomorphic to $kQ/I$, where $Q$ and $I$ satisfy the following conditions:
		\begin{enumerate}
			\item for each vertex $i\in Q$, the number of outgoing and incoming arrows are less than or equal to $2$;
			
			\item for each arrow $\alpha\in Q$, there is at most one arrow $\beta\in Q$ such that $\alpha\beta\notin \alpha\rad(A)\beta+\soc(A)$;
			
			\item for each arrow $\alpha\in Q$, there is at most one arrow $\gamma\in Q$ such that $\gamma\alpha\notin \gamma\rad(A)\alpha+\soc(A)$.
		\end{enumerate}
	\end{definition}

An algebra $A$ is called {\it special biserial} if it is isomorphic to an algebra of the form $\qa$ where $kQ$ is a path algebra and $I$ is an admissible ideal such that the following properties hold:
	
	\begin{enumerate}
		\item At every vertex $i$ in $Q$, there are at most two arrows starting at $i$ and there are at most two arrows ending at $i$.
		
		\item For every arrow $\alpha$ in $Q$, there exists at most one arrow $\beta$ such that $\beta\alpha\notin I$ and there exists at most one arrow $\gamma$ such that $\alpha\gamma\notin I$.
	\end{enumerate}

Thus, according to the above definitions, self-injective special biserial algebras form a special subclass of stably biserial algebras. In fact, algebras that are stably equivalent to self-injective special biserial algebras have the following characterization.

\begin{theorem}\textnormal{(\cite[Theorem 1]{AZ1})}\label{thm:sta-to-BGA=StB}
		Let $A$ be an indecomposable self-injective special biserial $k$-algebra which is not isomorphic to the Nakayama algebra with $\rad^2(A)=0$. If $B$ is a basic algebra stably equivalent to $A$, then $B$ is stably biserial.
	\end{theorem}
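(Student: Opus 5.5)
The plan is to transport structural data from the stable Auslander--Reiten quiver of $A$ to that of $B$, and then read off the three defining conditions of a stably biserial algebra from the almost split sequences of $B$ that involve projective modules. Throughout I would use the Auslander--Reiten theory of stable equivalences: a stable equivalence between algebras without nodes induces an isomorphism of stable translation quivers $\prescript{}{s}{\Gamma}_A\cong\prescript{}{s}{\Gamma}_B$, which commutes with $\tau$ and preserves the number of non-projective indecomposable summands in the middle terms of almost split sequences.

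\textbf{Step 1 (reduction to self-injective $B$).} I would first show that $B$ is self-injective and has no nodes. The algebra $A$ is indecomposable self-injective and is not the radical square zero Nakayama algebra, so $A$ has no nodes; this exclusion is exactly what rules out the standard counterexamples. I would then invoke the results of Mart\'inez-Villa and Reiten that stable equivalence preserves self-injectivity in the absence of nodes. After this step $B$ may be written as $kQ/I$ with $I$ admissible, and I may assume $B$ is non-simple and indecomposable.

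\textbf{Step 2 (the biserial AR-invariant of $A$).} Since $A$ is a self-injective special biserial algebra, it is a string algebra modulo socle. By Butler--Ringel, every almost split sequence over $A$ has at most two indecomposable non-projective summands in its middle term. This property lives in $\prescript{}{s}{\Gamma}_A$ together with the data of which meshes were modified by projectives, so by Step 1 it transfers to $B$. For each indecomposable projective $P$ of $B$ with $P$ not simple, I would consider the almost split sequence
\[0\to \operatorname{rad}P\to P\oplus \operatorname{rad}P/\operatorname{soc}P\to P/\operatorname{soc}P\to 0.\]
From it I would deduce that $\operatorname{rad}P/\operatorname{soc}P$ has at most two indecomposable summands. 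Applying $\Omega$ and $\Omega^{-1}$, which commute with the equivalence, this should yield condition (1): at most two arrows start and at most two arrows end at each vertex.

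\textbf{Step 3 (conditions (2) and (3)), the main obstacle.} Counting summands does not by itself force each summand $U$ of $\operatorname{rad}P/\operatorname{soc}P$ to have a simple top and a simple socle in the sense needed. What is required is the statement that for an arrow $\alpha$ there is at most one $\beta$ with $\alpha\beta\notin\alpha\operatorname{rad}(B)\beta+\operatorname{soc}(B)$. My plan is to characterize this inside the stable category. For each arrow $\beta\colon i\to j$, I would consider the module $\beta B$ and its image modulo socle. For $A$, the modules $\Omega(S)$ and $\Omega^{-1}(S)$ for simple $S$ lie at the ends of at most two sectional paths in $\prescript{}{s}{\Gamma}_A$, and the maps between them are irreducible-path compositions that are nonzero stably.

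I would show that $\alpha\beta\notin\alpha\operatorname{rad}(B)\beta+\operatorname{soc}(B)$ holds precisely when the corresponding composition of irreducible maps between $\Omega^{-1}(S_j)$-type objects is stably nonzero and lies on a single sectional path. Biseriality of $A$ then bounds the number of such $\beta$ by one. The delicate point is that the simples of $B$ need not correspond to simples of $A$, so the argument must be formulated intrinsically in terms of $B$'s own AR-quiver. What I would try first is to express the relevant radical layers of $P$ using the modules $\operatorname{rad}P/\operatorname{soc}P$ and their irreducible maps to $P/\operatorname{soc}P$, and then apply the transferred "two-summand" property of Step 2 to the almost split sequences starting at each summand $U$. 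Condition (3) follows dually, using the duality $D$ together with $\tau=\Omega^2\nu$.
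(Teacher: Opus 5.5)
The paper does not prove this statement; it quotes it from \cite{AZ1}. Your argument therefore has to stand on its own, and it has a genuine gap already at condition~(1).

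The only information that survives the isomorphism ${}_s\Gamma_A\cong{}_s\Gamma_B$ is the shape of the stable AR-quiver: every vertex has at most two immediate successors and predecessors, counted with multiplicity. The positions of the meshes $\mathrm{rad}P\to\mathrm{rad}P/\mathrm{soc}P\to P/\mathrm{soc}P$ coming from projectives are not visible in the stable category, so they do not transfer. If they did, equality of the numbers of simple modules would be immediate, whereas this is a theorem in \cite{AZ1}.

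From the shape you correctly get that $\mathrm{rad}P/\mathrm{soc}P$ has at most two indecomposable summands. This does not bound the number of arrows starting at the vertex of $P$, which is $\dim\mathrm{top}(\mathrm{rad}P)$. For the symmetric local algebra $k[x,y,z]/(x^2,y^2,z^2)$ one checks directly that $\mathrm{rad}P/\mathrm{soc}P$ is indecomposable, yet the quiver has three loops. In characteristic $2$ this algebra is the group algebra of $(\mathbb{Z}/2)^3$, a wild block. By Erdmann's theorem (as I recall it) its stable AR-components all have tree class $A_\infty$, so it satisfies every shape constraint you transfer.

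Applying $\Omega^{\pm1}$ does not help, since these are autoequivalences of the stable category and see nothing about tops. Step~1 also still owes an argument that $B$ has no nodes. You need this both for the self-injectivity result and for comparing almost split sequences.

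The missing idea is control over where the simple $B$-modules go, and this is also why Step~3 is a plan rather than a proof. Arrows of $B$ are counted by $\mathrm{Ext}^1_B(S_i,S_j)\cong\underline{\mathrm{Hom}}_B(\Omega S_i,S_j)$, and conditions~(2) and~(3) concern products of such maps. To read them off from $A$ you must therefore locate the images of $S_1,\dots,S_n$ in $\underline{\mathrm{mod}}\,A$. These images in general consist neither of simple $A$-modules nor of their syzygies; in the example at the end of Section~\ref{sec:rmk}, one of them is a non-simple uniserial module.

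Your criterion for $\alpha\beta\notin\alpha\,\mathrm{rad}(B)\beta+\mathrm{soc}(B)$, via stably nonzero composites along a single sectional path between ``$\Omega^{-1}(S_j)$-type objects'', is neither stated precisely nor proved. It also presupposes exactly this localization. Likewise, the two-summand property of Step~2, applied to the almost split sequences starting at the summands $U$ of $\mathrm{rad}P/\mathrm{soc}P$, says nothing about the tops and socles of $U$, by the same example.

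So none of conditions~(1)--(3) is established. Any proof must use more of the stable equivalence than the combinatorics of the stable AR-quiver.
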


Note that every symmetric stably biserial algebra can be represented by a Brauer graph $(\Gamma, m)$ together with a special set $\mathcal{L}$ of loops in $Q_\Gamma$. This result was proved in \cite[Section 5]{AZ1}, and a clearer exposition can be found in \cite[Theorem 2.4]{AZ2}. For various notations about Brauer graph algebra used below, we refer readers to Subsection \ref{subsec:def-BGA}.

\begin{Thm}\label{thm:sym-StBA}
	Any indecomposable symmetric stably biserial algebra $A$ has a presentation $kQ/I$, where $Q=Q_\Gamma$ is a connected quiver associated with a Brauer graph $(\Gamma,m)$, and the ideal of relations $I$ is generated by
	\begin{enumerate}
		\item $\alpha\beta=0$ for all $\alpha,\beta\in Q_1$, $\beta\neq \sigma(\alpha)$ and $\alpha\notin\mathcal{L}$;
		\item $C_\alpha^{m(C_\alpha)}=C_\beta^{m(C_\beta)}$ for all $\alpha,\beta\in Q_1$ with $t(\alpha)=t(\beta)$;
		\item $\alpha_i^2=t_{\alpha_i} C_{\alpha_i}^{m(C_{\alpha_i})}$ for each $\alpha_i\in\mathcal{L}$;
		\item $C_\alpha^{m(C_\alpha)}\beta=0$ for all $\alpha,\beta\in Q_1$,
	\end{enumerate}
	where $\mathcal{L}=\{\alpha_1,\cdots,\alpha_n\}\subseteq Q_1$ such that each $\alpha_i$ is a loop with $\sigma(\alpha_i)\neq \alpha_i$, and $t_{\alpha_i}\in k^*$.

	Moreover, when $\mathrm{char}(k)\neq 2$, any symmetric stably biserial algebra is isomorphic to an algebra $\qa$ as above with $\mathcal{L}=\varnothing$, in which case it is a Brauer graph algebra.
\end{Thm}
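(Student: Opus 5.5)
The plan is to begin from the defining presentation $A\cong kQ/I$ of a stably biserial algebra and to reconstruct from it, in three stages, a Brauer graph $(\Gamma,m)$ with $Q=Q_\Gamma$, a set of loops $\mathcal{L}$, and the relations (1)--(4).

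\emph{Stage 1: the permutation $\sigma$ and the Brauer graph.} Conditions (2) and (3) in the definition of stably biserial say that for each arrow $\alpha$ there is at most one $\beta$ with $\alpha\beta\notin\alpha\rad(A)\beta+\soc(A)$, and dually. I would set $\sigma(\alpha)$ to be this $\beta$ when it exists. I then need to show that $\sigma$ is defined on every arrow and is a permutation of $Q_1$. For this I will use that $A$ is symmetric. The symmetrizing form identifies $e_iA$ with $D(Ae_i)$, so every nonzero path extends to a path whose image spans the socle of the appropriate projective. Since $A$ is indecomposable and not semisimple, every arrow therefore has a non-trivial continuation.
\begin{itemize}
\item The $\sigma$-orbits define closed paths $C_\alpha$.
\item Each vertex of $Q$ has in- and out-degree at most $2$, so it lies on at most two such orbits.
\item Define $\Gamma$ by taking vertices $=$ $\sigma$-orbits (plus truncated vertices for quiver vertices lying on only one orbit), edges $=$ $Q_0$, and cyclic orders given by $\sigma$. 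This yields a ribbon graph with $Q=Q_\Gamma$.
\item Define $m(C_\alpha)$ as the largest $m$ with $C_\alpha^m\neq0$ modulo lower terms. Equivalently, $C_\alpha^{m}$ reaches $\soc(e_{t(\alpha)}A)$. Connectedness of $\Gamma$ follows from the indecomposability of $A$.
\end{itemize}

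\emph{Stage 2: normalizing the relations.} I would perform a triangular change of arrows. Each arrow is modified by elements of $\rad^2$ lying in higher layers, arguing by downward induction on the radical layer. The aim is to turn each relation $\alpha\beta\in\alpha\rad(A)\beta+\soc(A)$ with $\beta\neq\sigma(\alpha)$ into $\alpha\beta=0$. The obstruction is a socle term, and it can only appear when $\alpha\beta$ is a path from $s(\beta)$ back to itself of socle degree. For non-loops, or loops with $\sigma(\alpha)=\alpha$, such a term is absorbed by replacing $\alpha$ with $\alpha-\lambda\,\sigma^{-1}$-type paths, giving relation (1). The only survivors are $\alpha^2=t_\alpha C_\alpha^{m(C_\alpha)}$ for loops $\alpha$ with $\sigma(\alpha)\neq\alpha$. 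Those with $t_\alpha\neq0$ form $\mathcal{L}$, giving relation (3). Next, the symmetrizing form shows that the two socle paths $C_\alpha^{m}$ and $C_\beta^{m}$ ending at a common vertex are proportional with nonzero scalar. Rescaling arrows along each $\sigma$-orbit then makes them equal, giving relation (2). Relation (4) holds because $C_\alpha^m$ lies in the socle. Finally, a dimension count, using the known basis of a Brauer graph algebra as subpaths of the $C_\alpha^{m}$, shows that these relations generate all of $I$.

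\emph{Stage 3: the case $\mathrm{char}(k)\neq2$.} For $\alpha\in\mathcal{L}$, write $C_\alpha^{m}=\alpha q$ with $q=\sigma(\alpha)\cdots\sigma^{-1}(\alpha)\,C_\alpha^{m-1}$. I would replace $\alpha$ by
\[
\alpha'=\alpha-\tfrac{t_\alpha}{2}\,q .
\]
Because $q\alpha=\alpha q$ is in the socle and $q^2=0$, we get $\alpha'^2=\alpha^2-t_\alpha C_\alpha^{m}=0$. Since $\alpha'\equiv\alpha$ modulo $\rad^2$, the other relations are unchanged up to socle terms, which vanish. The result is a Brauer graph algebra with $\mathcal{L}=\varnothing$.

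The main obstacle is Stage 2: the changes of arrows must be made compatibly, so that killing one socle term does not create another. I would handle this by inducting on path length and using that all corrections lie in $\rad^{2}$ and multiply into the socle or to zero. As a fallback, I would follow the case analysis of \cite[Section 5]{AZ1} and \cite[Theorem 2.4]{AZ2}.
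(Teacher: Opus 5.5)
The paper does not prove this statement. It quotes it from \cite[Section 5]{AZ1} and \cite[Theorem 2.4]{AZ2}, so your fallback is exactly the paper's route. As a self-contained argument, however, your sketch has a genuine gap. It sits where some $C_\alpha^{m(C_\alpha)}$ has length $2$, i.e.\ at vertices $v$ of $\Gamma$ with $m(v)\,\mathrm{val}(v)=2$.

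In Stage 1 you conclude that $\sigma$ is defined on every arrow because every arrow has a nonzero continuation $\alpha\beta$. That inference is invalid: if $\alpha\beta\in\soc(A)$, then trivially $\alpha\beta\in\alpha\rad(A)\beta+\soc(A)$, and conditions (2)--(3) say nothing about $\alpha$. This does occur:
\begin{itemize}
\item If $\Gamma$ is a path $u - v - w$ with $m(v)=1$, both arrows around $v$ compose into the socle.
\item At a leaf of multiplicity $2$, the loop satisfies $\alpha^2\in\soc(A)$.
\end{itemize}
On such arrows your $\sigma$ is undefined, and in general it is not even determined by $A$. For $k[X,Y]/(X^2,Y^2)$ all conditions are vacuous. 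By Example~\ref{ex-isomorphic-BGAs}, when $\mathrm{char}(k)\neq 2$ both $\sigma=$ swap (a loop of multiplicity $1$) and $\sigma=\mathrm{id}$ (an edge with multiplicities $2,2$) arise from suitable generators. So $\sigma$ must be \emph{chosen} on these arrows, and the wrong socle products then killed.

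That normalization is not of the triangular kind you describe in Stage 2. The path $p$ with $p\beta=s_i$ is now a single arrow parallel to $\alpha$, so you need an invertible linear change among parallel arrows or loops. This uses nondegeneracy of the symmetrizing form and, in the local two-loop case, the solution of a quadratic equation. Your claim that all corrections lie in $\rad^2$ fails precisely here, so these cases need a separate argument.

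The same degenerate case breaks Stage 3. If $\Gamma$ is a single loop of multiplicity $1$, then $\sigma$ swaps the two loops $\alpha,\beta$, and your $q$ is the arrow $\beta$ itself. Hence $\alpha'\not\equiv\alpha$ modulo $\rad^2$. If both loops lie in $\mathcal{L}$, then $q^2=\beta^2=t_\beta\beta\alpha\neq 0$, and you get $\alpha'^2=\tfrac14 t_\alpha^2t_\beta\,C_\alpha\neq0$. The fix is to take $\alpha'=\alpha+c\beta$ with $t_\beta c^2+2c+t_\alpha=0$, and then treat $\beta$. In every other case $q$ has length at least $2$ and $q^2=0$, so your computation is correct there.

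A smaller point in Stage 2 concerns relation (2). Rescaling arrows gives only one scalar per $\sigma$-orbit, because every rotation $C_\gamma$ of an orbit is multiplied by the same product. So proportionality of the two socle paths at each vertex of $Q$ is not enough: the ratios must be coherent around loops and cycles of $\Gamma$. This is where you need the trace property $\phi(ab)=\phi(ba)$ of a symmetrizing form $\phi$. It makes $\phi(C_\gamma^{m(C_\gamma)})$ constant on each orbit, and normalizing these values to $1$ yields relation (2).
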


Note that the fact that $Q_\Gamma$ contains a deformed loop implies that $\Gamma$ contains a loop corresponding to a face of perimeter $1$.

\begin{Rem1}\label{remark-center-of-SSBA}
If $B$ is a symmetric stably biserial algebra defined by the Brauer graph $(\Gamma, m)$ and a set $\mathcal{L}$ of deformed loops, then according to \cite[Proposition 4.1]{AZ2}, $Z(B)$ is also generated by $1$ together with the elements of type $(a)$, $(b)$, $(c)$ in Proposition \ref{prop:center-of-stBA} as a $k$-vector space. Moreover, if $A$ is the Brauer graph algebra defined by the Brauer graph $(\Gamma, m)$, then $Z(A)/\mathrm{soc}(Z(A))$ and $Z(B)/\mathrm{soc}(Z(B))$ (resp. $Z(A)/R(A)$ and $Z(B)/R(B)$) are isomorphic as $k$-algebras, where $R(A)$ and $R(B)$ denote the Reynolds ideals of $A$ and $B$ respectively (for the definition of Reynolds ideal, see Subsection \ref{subsec:stb-and-center}).
\end{Rem1}

\begin{Rem1}\label{remark-center-quotient}
Let $A$ (resp. $A'$) be a symmetric stably biserial algebra defined by the Brauer graph $(\Gamma, m)$ and a set $\mathcal{L}$ of deformed loops (resp. by the Brauer graph $(\Gamma',m')$ and a set $\mathcal{L}'$ of deformed loops). Denote by $r_i$ (resp. $r'_i$) the number of vertices in $\Gamma$ (resp. $\Gamma'$) with multiplicity $i$, and denote by $l$ (resp. $l'$) the number of faces of $\Gamma$ (resp. $\Gamma'$) of perimeter $1$. If $A$ and $A'$ are stably equivalent of Morita type and $k$ is of positive characteristic, then according to Proposition \ref{prop:ZZ}, Corollary \ref{cor-center-quotient} and Remark \ref{remark-center-of-SSBA}, we have $r_i=r'_i$ for $i>2$ and $r_2+l=r'_2+l'$.
\end{Rem1}

By comparing the tubes in the stable Auslander-Reiten quiver of a symmetric stably biserial algebra with the faces of its corresponding Brauer graph, Antipov and Zvonareva established in \cite[Section 4.2]{AZ2} the following result.

\begin{Prop}\label{prop:stbA-tube-and-face}
	Let $A$ be a representation-infinite symmetric stably biserial algebra with Brauer graph $\Gamma$. Then, for any face of $\Gamma$ of perimeter $p > 2$, the structure of the stable Auslander-Reiten quiver $\prescript{}{s}{\Gamma}_{A}$ of $A$ reflects this combinatorial data. If $p$ is odd, then $\prescript{}{s}{\Gamma}_{A}$ contains a tube of rank $p$ that is invariant under the syzygy functor $\Omega_A$. If $p$ is even, then $\prescript{}{s}{\Gamma}_{A}$ contains a pair of tubes, each of rank $p/2$, which are exchanged by $\Omega_A$. Moreover, each tube of rank $p\geq 2$ of $\prescript{}{s}{\Gamma}_{A}$ is corresponded to some face of $\Gamma$ under this correspondence.
\end{Prop}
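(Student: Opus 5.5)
The statement to prove is Proposition~\ref{prop:stbA-tube-and-face}: for a representation-infinite symmetric stably biserial algebra, faces of perimeter $p>2$ correspond to tubes of the stable Auslander--Reiten quiver as described. The plan is to reduce first to the case of a genuine Brauer graph algebra and then analyse the mouths of tubes with string combinatorics.

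\textbf{Reduction.} By Theorem~\ref{thm:sym-StBA}, $A\cong kQ_\Gamma/I$, where $I$ may contain deformed loop relations $\alpha_i^2=t_{\alpha_i}C_{\alpha_i}^{m}$. These relations only change products landing in $\soc(A)$. I therefore expect a bijection between non-projective indecomposable $A$-modules and those of the Brauer graph algebra $A_\Gamma$, compatible with $\Omega$ and $\tau=\Omega^2$ (the algebras are symmetric). I would argue this via the stable categories modulo socle: both $A$ and $A_\Gamma$ are socle deformations of the same algebra $A/\soc(A)\cong A_\Gamma/\soc(A_\Gamma)$. I would then invoke the standard fact that, for symmetric algebras, the stable AR-quiver away from projectives is governed by $A/\soc A$, or alternatively check directly on string modules. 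If this comparison is too delicate, I would instead run the string-combinatorics argument below directly on $A$, noting that the deformed loops only affect uniserial pieces of length at most two at the socle.

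\textbf{Tubes for $A_\Gamma$.} For Brauer graph algebras the modules at the mouths of exceptional tubes are the uniserial (hook) string modules $M_h$, one for each half-edge $h$: the string running along the arrows around $r(h)$ starting at $\bar h$. Using the special biserial syzygy description, I would compute
\[
\Omega(M_h)\cong M_{\iota\rho(h)}
\]
up to the choice of convention, so that $\Omega$ acts on these mouth modules by the face permutation $\iota\rho$. Hence $\tau=\Omega^2$ acts by $(\iota\rho)^2$.

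\textbf{Rank and parity.} A face $F=(h_1,\dots,h_p)$ is an orbit of length $p$ of $\iota\rho$. If $p$ is odd, $(\iota\rho)^2$ is still transitive on $F$, giving one $\tau$-orbit of length $p$, hence a tube of rank $p$ that is $\Omega$-stable. If $p$ is even, $F$ splits into two $(\iota\rho)^2$-orbits $\{h_1,h_3,\dots\}$ and $\{h_2,h_4,\dots\}$, each of length $p/2$, giving two tubes of rank $p/2$ interchanged by $\Omega$.

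Two further checks are needed. First, when $p>2$ the modules $M_h$ must be pairwise non-isomorphic and genuinely lie at mouths; this requires each $M_h$ to have exactly one indecomposable summand in the middle term of its AR-sequence, which I would verify from the canonical-embedding description of AR-sequences for string modules. Second, the representation-infinite hypothesis is needed to rule out the $\mathbb{Z}A_n$ components of Brauer tree algebras. For the converse, the Erdmann--Skowro\'nski classification of stable AR-components of special biserial self-injective algebras says that every tube of rank at least $2$ is a component containing string modules, so its mouth consists of uniserial hooks and therefore arises from a face.

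\textbf{Main obstacle.} I expect the main obstacle to be the syzygy computation $\Omega(M_h)\cong M_{\iota\rho(h)}$ in the presence of multiplicities and loops, since it must match the conventions for $\rho$ and $\iota$ exactly. Transferring this computation across the deformed loops is the other delicate point.
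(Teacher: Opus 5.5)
The paper gives no argument of its own here; it quotes \cite[Section 4.2]{AZ2}. For a genuine Brauer graph algebra your outline is the standard one and the parity argument is correct. With $N_x=P_{\bar x}/A\alpha_{\iota x}$ one has $\Omega N_x=A\alpha_{\iota x}\cong N_{\rho\iota x}$, and $\rho\iota=\iota(\iota\rho)\iota$ has the same orbit lengths as $\iota\rho$.

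\textbf{The reduction step fails.} It only matters in characteristic $2$, where $\mathcal L$ can be nonempty. Identifying non-projective indecomposables through $A/\soc(A)\cong A_\Gamma/\soc(A_\Gamma)$ does respect $\tau$: every AR-sequence not ending at some $P/\soc P$ lives in $A/\soc(A)$-mod, and $\tau(P/\soc P)=\mathrm{rad}\,P$ in both algebras. It does \emph{not} respect $\Omega$, because $\Omega_A(M)$ contains the socle of the projective cover of $M$ and so depends on the socle relations.

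Lemma~\ref{lem-not-st-eq} already gives a counterexample. In characteristic $2$, put $U=X+Y$ and $V=X$; then $k[X,Y]/(X^2-Y^2,XY)\cong k[U,V]/(U^2,V^2-UV)$. This is the multiplicity-one loop with one deformed loop. It has the same quotient $k[X,Y]/(X,Y)^2$ by the socle as the undeformed loop algebra $k[X,Y]/(X^2,Y^2)$. Yet $\Omega$ fixes every module at the mouth of a rank-one tube for the undeformed algebra, and moves all but one of them for the deformed one. Whether a face gives one $\Omega$-stable tube or two tubes interchanged by $\Omega$ is exactly the information $\tau$ does not carry. So the $A/\soc(A)$ route cannot prove the parity part of the statement.

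\textbf{What the argument needs instead.} The direct computation in $A$, which you list only as a fallback, has to carry the argument. One has $\Omega_A(N_x)=A\alpha_{\iota x}\cong P_{\overline{\rho\iota x}}/\{a\in P_{\overline{\rho\iota x}} : a\alpha_{\iota x}=0\}$. In $A$ the only products that become nonzero are $\alpha\alpha=t_\alpha C_\alpha^{m}$ with $\alpha\in\mathcal L$; anything passing through $C_\alpha^m$ times an arrow still vanishes. Hence this annihilator is still $A\alpha_{\iota\rho\iota x}$ unless $\alpha_{\iota x}\in\mathcal L$.

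Now $\alpha_y\in\mathcal L$ means $\rho y=\iota y$. For $y=\iota x$ this forces $\rho\iota x=x$, so $x$ lies on a face of perimeter $1$. Therefore $\Omega_A N_x\cong N_{\rho\iota x}$ on every face of perimeter $\ge 2$. This is why the statement survives for stably biserial algebras. Your heuristic that deformed loops only affect ``uniserial pieces of length at most two'' is not the right mechanism.

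\textbf{The converse is asserted, not proved.} Knowing that a tube of rank $\ge 2$ consists of string modules does not show that its mouth modules are the $N_x$. You need a Butler--Ringel analysis in the string algebra $A/\soc(A)$ showing that a string module whose AR-sequence has an indecomposable non-projective middle term is one of the $N_x$. Alternatively, give a precise reference for this identification of the mouths of exceptional tubes.
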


\begin{Cor}\label{cor-stbA-tube-and-face}
Let $A$ and $A'$ be representation-infinite symmetric stably biserial algebras with Brauer graph $\Gamma$, $\Gamma'$ respectively. If $A$ and $A'$ are stably equivalent, then for each positive integer $p>2$, $\Gamma$ and $\Gamma'$ have the same number of faces of perimeter $p$.
\end{Cor}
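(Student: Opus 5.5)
The plan is to deduce the corollary directly from Proposition \ref{prop:stbA-tube-and-face} by transporting tubes along the stable equivalence. Let $\phi: A\text{-}\underline{\mathrm{mod}}\to A'\text{-}\underline{\mathrm{mod}}$ be a stable equivalence. A stable equivalence between self-injective algebras (without nodes, which holds here since the algebras are symmetric) induces an isomorphism of stable Auslander--Reiten quivers $\prescript{}{s}{\Gamma}_{A}\cong\prescript{}{s}{\Gamma}_{A'}$. This is the standard Auslander--Reiten result: almost split sequences and irreducible maps between non-projective indecomposables are preserved, and $\tau\cong\Omega^2\nu$ is carried to $\tau'$. In particular $\phi$ sends tubes of rank $s$ bijectively to tubes of rank $s$.

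The delicate point is the role of $\Omega$. Proposition \ref{prop:stbA-tube-and-face} distinguishes odd and even perimeters according to whether a tube is $\Omega$-stable or occurs in an $\Omega$-exchanged pair. For an arbitrary stable equivalence we do not know a priori that $\phi$ commutes with $\Omega$. I would first try to avoid needing this by counting ranks only.

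For each rank $s\ge 2$, let $t_s$ denote the number of tubes of rank $s$ in $\prescript{}{s}{\Gamma}_{A}$. By the proposition, these tubes are exactly the ones coming from faces of perimeter $p>2$, with the following contributions:
\begin{itemize}
\item an odd face of perimeter $p$ contributes one tube of rank $p$;
\item an even face of perimeter $p$ contributes two tubes of rank $p/2$.
\end{itemize}
Faces of perimeter $\le 2$ give ranks $\le 2$ and require care. Writing $f_p$ for the number of faces of perimeter $p$, we get $t_s=[s \text{ odd}]\,f_s+2f_{2s}$ for $s\geq 3$. For $s=2$, the face of perimeter $4$ contributes but perimeter-$2$ faces may also interfere, so I would avoid $s=2$ and recover $f_4$ differently. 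The proposition guarantees that every tube of rank $\ge 2$ comes from some face; I would use it to ensure that rank-$2$ tubes come only from faces of perimeter $4$ or $2$.

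The counting then runs by downward induction on $p$, using that $t_s=t'_s$ for all $s$. For large $p$ everything vanishes. If $p>2$ is odd, then $t_p=f_p+2f_{2p}$, and $f_{2p}$ is known by induction, so $f_p=f'_p$. If $p>4$ is even, then $t_{p/2}$ has $p/2\ge3$, and it equals $[p/2\text{ odd}]f_{p/2}+2f_p$. The unknown $f_{p/2}$ is a smaller perimeter, so this equation does not isolate $f_p$. Downward induction on $p$ therefore fails for even $p$.

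I would resolve this using $\Omega$ after all. For a stable equivalence of Morita type, $\phi$ commutes with $\Omega$, since tensoring with a bimodule projective on both sides preserves syzygies up to projective summands. For general stable equivalences between representation-infinite self-injective algebras, I would invoke the known fact, due to Asashiba and Auslander--Reiten, that a stable equivalence commutes with $\Omega$ on all but finitely many $\tau$-orbits, or at least on non-periodic components. Tubes are $\tau$-periodic, so I expect this to be the main obstacle. The fallback is to argue through $\tau=\Omega^2$ (valid since the algebras are symmetric, so $\nu=1$) together with the fact that $\phi$ commutes with $\tau$. Then $\Omega$ restricted to a tube is a square root of $\tau$ in the appropriate sense. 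An $\Omega$-stable tube of rank $p$ (with $p$ odd) versus a pair of tubes of rank $p/2$ swapped by $\Omega$ can be told apart intrinsically, because $\Omega$ is determined on each tube up to this ambiguity by the requirement that $\Omega^2=\tau$.

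Granting that $\phi$ preserves both ranks and the $\Omega$-behaviour, the count is immediate:
\begin{itemize}
\item $f_p$ for odd $p>2$ equals the number of $\Omega$-stable tubes of rank $p$;
\item $f_p$ for even $p>2$ equals half the number of tubes of rank $p/2$ lying in $\Omega$-exchanged pairs.
\end{itemize}
Both quantities are preserved by $\phi$, so $f_p=f'_p$ for every $p>2$.
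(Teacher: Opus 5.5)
\textbf{Gap: the $\Omega$-compatibility of $\phi$ on tubes.} Your route is the paper's. A stable equivalence induces an isomorphism of stable Auslander--Reiten quivers, which is claimed to send $\Omega$-invariant tubes to $\Omega$-invariant tubes and $\Omega$-exchanged pairs to $\Omega$-exchanged pairs, preserving ranks; the face counts are then read off from Proposition~\ref{prop:stbA-tube-and-face}. The paper simply asserts this $\Omega$-compatibility, so you are right to single it out. For a stable equivalence of Morita type your justification is correct: tensoring with a bimodule projective on both sides is exact and preserves projectives, hence commutes with $\Omega$ in the stable category. That is the only setting in which the corollary is later used (proof of Theorem~\ref{thm:st.M-BGA}), and there your argument is complete.

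Your fallback for an arbitrary stable equivalence fails.
\begin{enumerate}
\item An automorphism of a stable tube $\mathbb{Z}A_\infty/\langle\tau^s\rangle$ as a translation quiver is a power of $\tau$. So a square root of $\tau$ mapping a rank-$s$ tube to itself exists exactly when $s$ is odd.
\item A pair of rank-$s$ tubes interchanged by $\Omega$ is compatible with $\Omega^2=\tau$ for every $s$.
\item Hence, for odd $s$, an $\Omega$-stable tube of rank $s$ (from a face of perimeter $s$) and one member of an exchanged pair of rank-$s$ tubes (from a face of perimeter $2s$) look identical in the translation quiver. For example, two faces of perimeter $3$ and one face of perimeter $6$ each contribute exactly two tubes of rank $3$.
\end{enumerate}
Your comparison of a rank-$p$ tube with a pair of rank-$p/2$ tubes is between different ranks, so it misses this ambiguity. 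For the same reason, your rank count only pins down $f_{2s}=t_s/2$ for even $s$; for odd $s$ it gives only $t_s=f_s+2f_{2s}$, so the claim that $f_{2p}$ is ``known by induction'' is false.

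Thus for a general stable equivalence you really need $\phi(\Omega M)\cong\Omega\phi(M)$ for at least one module $M$ in each tube, and no argument from $\tau$ alone can supply it. Your worry about periodicity, on the other hand, is misplaced. A result of the form ``$\phi$ commutes with $\Omega$ outside finitely many $\tau$-orbits'' would already suffice, for two reasons. Every tube, although $\tau$-periodic, contains infinitely many $\tau$-orbits (one for each quasi-length). And whether $\Omega$ fixes a tube or moves it is detected by any single module in it.
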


\begin{proof}
Since both $A$ and $A'$ are representation-infinite, $\rad^2(A)\neq 0$ and $\rad^2(A')\neq 0$. Then a stable equivalent from $A$ to $A'$ induces an isomorphism between the stable Auslander-Reiten quiver $\prescript{}{s}{\Gamma}_{A}$ of $A$ and the stable Auslander-Reiten quiver $\prescript{}{s}{\Gamma}_{A'}$ of $A'$. Moreover, a tube of $\prescript{}{s}{\Gamma}_{A}$ which is invariant under the syzygy functor $\Omega_A$ (resp. a pair of tubes of $\prescript{}{s}{\Gamma}_{A}$ which are exchanged by $\Omega_A$) corresponds to a tube of $\prescript{}{s}{\Gamma}_{A'}$ which is invariant under the syzygy functor $\Omega_{A'}$ (resp. a pair of tubes of $\prescript{}{s}{\Gamma}_{A'}$ which are exchanged by $\Omega_{A'}$), and the rank of the corresponding tubes are equal. By the correspondence between the faces of Brauer graph of perimeter $>2$ and the tubes in stable Auslander-Reiten quiver with rank $\geq 2$ in Proposition \ref{prop:stbA-tube-and-face}, $\Gamma$ and $\Gamma'$ have the same number of faces of perimeter $p$ for each positive integer $p>2$.
\end{proof}

The following lemma is standard; see, for example, \cite{TW, LZZ}. For the convenience of the reader, we include a proof here.

\begin{Lem}\label{lem-invariant-factors}
Suppose that $A$ and $B$ are two self-injective algebras whose stable categories are triangulated equivalent, and denote by $C(A)$ and $C(B)$ the Cartan matrices of $A$ and $B$ respectively. If $A$ and $B$ have the same number of isomorphism classes of simple modules, then as matrices over $\mathbb{Z}$, $C(A)$ and $C(B)$ have the same invariant factors. In particular, $C(A)$ and $C(B)$ have the same rank.
\end{Lem}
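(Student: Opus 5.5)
The plan is to identify the Grothendieck group of the stable category with the cokernel of the Cartan matrix, and then read off invariant factors from that abelian group. For a self-injective algebra $A$ with $n$ simple modules, the Grothendieck group $K_0(A\text{-}\underline{\mathrm{mod}})$ of the triangulated category $A\text{-}\underline{\mathrm{mod}}$ is isomorphic to $K_0(A\text{-}\mathrm{mod})/\langle [P]\mid P \text{ projective}\rangle$. The group $K_0(A\text{-}\mathrm{mod})$ is free on the classes of the simples $[S_1],\dots,[S_n]$. The class of $P_i$ is $\sum_j c_{ij}[S_j]$. So this quotient is $\mathbb{Z}^n/\mathbb{Z}^n C(A)$, the cokernel of $C(A)$ viewed as a map $\mathbb{Z}^n\to\mathbb{Z}^n$ on row vectors.

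The first step is to justify the isomorphism $K_0(A\text{-}\underline{\mathrm{mod}})\cong K_0(A\text{-}\mathrm{mod})/\langle[\text{proj}]\rangle$. A triangle in $A\text{-}\underline{\mathrm{mod}}$ is, up to isomorphism, induced by a short exact sequence $0\to X\to Y\to Z\to 0$, modulo projective summands. Hence the relations $[X]-[Y]+[Z]$ in the triangulated $K_0$ correspond to short exact sequences together with the relation that projectives vanish. More concretely, I would define a map $\mathbb{Z}^n/\mathrm{Im}\,C(A)\to K_0(A\text{-}\underline{\mathrm{mod}})$ by $[S_i]\mapsto[S_i]$. Its inverse is given by the dimension vector of the composition factors, which is well defined on stable isomorphism classes modulo projective classes. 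Triangles map to zero because every triangle is isomorphic to one coming from a short exact sequence, after adding projective summands. This is a standard argument (as in \cite{TW}), and I expect it to be the only step needing care. The main obstacle is checking that the dimension-vector map is additive on all distinguished triangles, not just the standard ones. This is handled by the fact that triangles in the stable category of a Frobenius category are exactly the images of conflations, up to isomorphism.

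The second step is to observe that a triangle equivalence $A\text{-}\underline{\mathrm{mod}}\simeq B\text{-}\underline{\mathrm{mod}}$ induces an isomorphism of the Grothendieck groups. Therefore $\mathrm{coker}\,C(A)\cong\mathrm{coker}\,C(B)$ as abelian groups.

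The last step uses the Smith normal form. Since both Cartan matrices are $n\times n$ with the same $n$, we can write $C(A)=U\,\mathrm{diag}(d_1,\dots,d_s,0,\dots,0)\,V$ with $U,V\in GL_n(\mathbb{Z})$. This gives $\mathrm{coker}\,C(A)\cong\mathbb{Z}^{n-s}\oplus\bigoplus_i\mathbb{Z}/d_i$. By the structure theorem for finitely generated abelian groups, the free rank $n-s$ and the nontrivial invariant factors are determined by the group. The trivial factors $d_i=1$ are then determined by counting, since the total size is $n$ in both cases. Hence the invariant factors of $C(A)$ and $C(B)$ coincide, and in particular so do their ranks $s$.
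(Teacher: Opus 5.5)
Your proposal is correct and follows the same route as the paper: identify $K_0(A\text{-}\underline{\mathrm{mod}})$ with $\mathrm{coker}\,C(A)$, transport it along the triangle equivalence, and use the Smith normal form together with the equal number $n$ of simples to recover the rank, the number of unit invariant factors, and the torsion factors. The only difference is that you sketch why the sequence $\mathbb{Z}^n\xrightarrow{C(A)}\mathbb{Z}^n\to K_0(A\text{-}\underline{\mathrm{mod}})\to 0$ is exact, using the fact that every triangle comes from a short exact sequence. The paper simply cites this as well known.
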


\begin{proof}
Denote by $\text{K}_{0}(A\text{-}\underline{\mathrm{mod}})$ and $\text{K}_{0}(B\text{-}\underline{\mathrm{mod}})$ the stable Grothendieck groups of $A$ and $B$ respectively. It is well-known that there exist exact sequences
$$\mathbb{Z}^{n}\xrightarrow{C(A)}\mathbb{Z}^{n}\rightarrow
\text{K}_{0}(A\text{-}\underline{\mathrm{mod}})\rightarrow 0$$
and
$$\mathbb{Z}^{n}\xrightarrow{C(B)}\mathbb{Z}^{n}\rightarrow
\text{K}_{0}(B\text{-}\underline{\mathrm{mod}})\rightarrow 0,$$
where $n$ is the number of isomorphism classes of simple $A$-modules. Denote by $d_1\mid d_2\mid\cdots\mid d_r$ (resp. $e_1\mid e_2\mid\cdots\mid e_l$) the invariant factors of $C(A)$ (resp. $C(B)$) as a matrix over $\mathbb{Z}$, and suppose that $d_1,d_2,\cdots,d_{r'}=1$ and $d_{r'+1}>1$ (resp. $e_1,e_2,\cdots,e_{l'}=1$ and $e_{l'+1}>1$). Then $$\text{K}_{0}(A\text{-}\underline{\mathrm{mod}})\cong\mathbb{Z}^{n-r}\oplus\mathbb{Z}/d_1\mathbb{Z}
\oplus\mathbb{Z}/d_2\mathbb{Z}\oplus\cdots\oplus\mathbb{Z}/d_r\mathbb{Z}$$
and
$$\text{K}_{0}(B\text{-}\underline{\mathrm{mod}})\cong\mathbb{Z}^{n-l}\oplus\mathbb{Z}/e_1\mathbb{Z}
\oplus\mathbb{Z}/e_2\mathbb{Z}\oplus\cdots\oplus\mathbb{Z}/e_l\mathbb{Z}.$$
The free part of $\text{K}_{0}(A\text{-}\underline{\mathrm{mod}})$ (resp. $\text{K}_{0}(B\text{-}\underline{\mathrm{mod}}))$ is $\mathbb{Z}^{n-r}$ (resp. $\mathbb{Z}^{n-l}$), and the torsion part of $\text{K}_{0}(A\text{-}\underline{\mathrm{mod}})$ (resp. $\text{K}_{0}(B\text{-}\underline{\mathrm{mod}}))$ is
$\mathbb{Z}/d_{r'+1}\mathbb{Z}
\oplus\mathbb{Z}/d_{r'+2}\mathbb{Z}\oplus\cdots\oplus\mathbb{Z}/d_r\mathbb{Z}$
(resp. $\mathbb{Z}/e_{l'+1}\mathbb{Z}
\oplus\mathbb{Z}/e_{l'+2}\mathbb{Z}\oplus\cdots\oplus\mathbb{Z}/e_l\mathbb{Z}$). Since the stable categories of $A$ and $B$ are triangulated equivalent, $\text{K}_{0}(A\text{-}\underline{\mathrm{mod}})$ and $\text{K}_{0}(B\text{-}\underline{\mathrm{mod}})$ are isomorphic. Then we have $n-r=n-l$, $r-r'=l-l'$, and $d_{r'+i}=e_{l'+i}$ for $i=1,2,\cdots,r-r'$. Therefore $C(A)$ and $C(B)$ have the same invariant factors, and in particular, the rank of $C(A)$ and $C(B)$ are equal.
\end{proof}

\begin{Def}\textnormal{(see \cite[Section 3]{AZ2})}\label{def-caterpillar}
A Brauer graph algebra $A=A_{\Gamma}$ is said to be a caterpillar if its defining Brauer graph $\Gamma$ (as a ribbon graph) is of the form
$$\begin{tikzpicture}[x=12pt,y=12pt,yscale=1,xscale=1]
\fill (0,0) circle (0.5ex);
\draw    (0,0) .. controls (-6,4) and (-4,5) .. (-1.5,4)
.. controls (1,3) and (2,1) .. (0,0);
\draw    (0,0) .. controls (-4,4) and (-2,5) .. (0.5,4)
.. controls (3,3) and (3,1) .. (0,0);
\draw    (0,0) .. controls (1,4) and (4,5) .. (5,4)
.. controls (6,3) and (8,0) .. (0,0);
\fill (-1,1.7) circle (0.1ex);
\fill (-0.5,1.8) circle (0.1ex);
\fill (0,1.7) circle (0.1ex);
\fill (2.3,1) circle (0.1ex);
\fill (2.4,0.7) circle (0.1ex);
\fill (2.4,0.4) circle (0.1ex);
\end{tikzpicture}$$
or of the form
$$\begin{tikzpicture}[x=15pt,y=15pt,yscale=1,xscale=1]
\fill (0,0) circle (0.5ex);
\fill (5,0) circle (0.5ex);
\node at(6,0) {.};
\fill (0,0.9) circle (0.1ex);
\fill (0.35,0.8) circle (0.1ex);
\fill (0.6,0.7) circle (0.1ex);
\fill (4.9,0.9) circle (0.1ex);
\fill (5.25,0.85) circle (0.1ex);
\fill (5.55,0.7) circle (0.1ex);
\draw    (0,0) .. controls (-4,3) and (1,3) .. (5,0) ;
\draw    (0,0) .. controls (-2,3) and (4,3) .. (5,0) ;
\draw    (0,0) .. controls (4,3) and (9,3) .. (5,0) ;
\end{tikzpicture}$$
\end{Def}

For an algebra $\Lambda$, denote by $\mathrm{Out}^{0}(\Lambda)$ the identity component of the group of outer automorphisms $\mathrm{Out}(\Lambda)=\mathrm{Aut}(\Lambda)/\mathrm{Inn}(\Lambda)$ of $\Lambda$. Here $\mathrm{Aut}(\Lambda)$ is the group of automorphisms of $\Lambda$ and $\mathrm{Inn}(\Lambda)=\{f_a\mid a$ is a unit in $\Lambda\}$ is the group of inner automorphisms of $\Lambda$, where $f_a(x)=axa^{-1}$. If $\Lambda$ and $\Lambda'$ are self-injective and are stably equivalent of Morita type, then it follows from \cite[Th\'eor\`eme 4.3]{Rou} that $\mathrm{Out}^{0}(\Lambda)$ and $\mathrm{Out}^{0}(\Lambda')$ are isomorphic as algebraic groups.

\begin{Thm}\textnormal{(\cite[Theorem 1.1]{AZ2})}\label{thm-maximal-tori}
Let $A$ be a non-local symmetric stably biserial algebra over $k$ ($\mathrm{char}(k)=2$) or a non-local Brauer graph algebra over $k$ ($\mathrm{char}(k)\neq 2$), which is not a caterpillar. Let $\Gamma$
be the Brauer graph of $A$, $V(\Gamma)$ and $E(\Gamma)$ be the number of vertices and edges of $\Gamma$ respectively, and $d$ be the number of deformed loops in $A$ ($d=0$ if $A$ is a Brauer graph algebra). Then the rank of the maximal torus in $\mathrm{Out}^{0}(A)$ is $E(\Gamma)-V(\Gamma)-d+2$.
\end{Thm}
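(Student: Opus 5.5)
The plan is to realise an explicit torus of ``diagonal'' automorphisms, compute its rank modulo inner automorphisms, and then show that it is a maximal torus of $\mathrm{Out}^0(A)$.

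\textbf{Step 1: the diagonal torus.} Use the presentation $A=kQ_\Gamma/I$ of Theorem~\ref{thm:sym-StBA}. Let $\mathcal{T}\subseteq\mathrm{Aut}(A)$ be the torus of automorphisms fixing every primitive idempotent $e_v$ and sending each arrow $\alpha$ to $\lambda_\alpha\alpha$ with $\lambda_\alpha\in k^*$. We determine when such a scaling preserves $I$.
\begin{itemize}
\item The monomial relations $\alpha\beta=0$ and $C_\alpha^{m}\beta=0$ impose nothing.
\item For a non-truncated vertex $x$ of $\Gamma$, put $\Lambda_x=\bigl(\prod_{\alpha\text{ around }x}\lambda_\alpha\bigr)^{m(x)}$. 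The relation $C_\alpha^{m}=C_\beta^{m}$ forces $\Lambda_x=\Lambda_y$ whenever $x,y$ share an edge. Since $\Gamma$ is connected, this gives $V'-1$ independent multiplicative conditions, where $V'$ is the number of non-truncated vertices.
\item Each deformed loop $\alpha_i\in\mathcal{L}$ adds one further condition $\lambda_{\alpha_i}^2=\Lambda$, where $\Lambda$ is the common value of the $\Lambda_x$. This gives $d$ conditions.
\end{itemize}
The number of arrows is $2E-(V-V')$, since each truncated vertex removes one half-edge's arrow. Hence $\dim\mathcal{T}=2E-(V-V')-(V'-1)-d=2E-V-d+1$. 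Here one checks that these characters are linearly independent over $\mathbb{Q}$, which is a rank computation on the incidence data. The truncated vertices cancel out of this count.

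\textbf{Step 2: the inner part.} The intersection $\mathcal{T}\cap\mathrm{Inn}(A)$ contains conjugation by the units $\sum_v\mu_ve_v$. Such a unit rescales $\lambda_\alpha$ by $\mu_{t(\alpha)}\mu_{s(\alpha)}^{-1}$, and because $Q_\Gamma$ is connected with $E$ vertices this is a torus of rank $E-1$. We must also show that no larger subtorus of $\mathcal{T}$ is inner. An inner automorphism fixing all $e_v$ comes from a unit $u=\sum\mu_ve_v+r$ with $r$ in the radical. Comparing the action on $\mathrm{rad}A/\mathrm{rad}^2A$ shows that the diagonal part of $u$ already induces the same scalars. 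This step is where the non-caterpillar hypothesis enters. For a caterpillar, the radical part $r$ can mix parallel paths, and the weight spaces of $\mathcal{T}$ on the arrows are too degenerate to separate them. Granting this, the image of $\mathcal{T}$ in $\mathrm{Out}^0(A)$ has rank $(2E-V-d+1)-(E-1)=E-V-d+2$.

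\textbf{Step 3: maximality (the main obstacle).} Let $S$ be any torus in $\mathrm{Aut}^0(A)$. Since $S$ is linearly reductive, after conjugating by an inner automorphism we may assume that $S$ fixes a complete set of primitive idempotents. Then $S$ acts semisimply on $e_w(\mathrm{rad}A/\mathrm{rad}^2A)e_v$, and we lift weight vectors to new arrow representatives. The key claim is that, outside the caterpillar case, this new choice of arrows still satisfies the relations of Theorem~\ref{thm:sym-StBA} with possibly altered constants $t_{\alpha_i}$. The reason is that biseriality pins down each arrow up to a scalar modulo $\mathrm{rad}^2$, and $S$-equivariance kills the correction terms. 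Consequently $S$ is conjugate into $\mathcal{T}$, so its rank is at most that of $\mathcal{T}$.

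I expect this normal-form argument, together with the injectivity claim in Step 2, to be the delicate part. The approach I would try first is a case analysis of the $k$-spaces $e_wAe_v$, determining when a weight decomposition can identify two arrow-paths. The multi-edge and loop configurations where this happens should be exactly the caterpillars, which is why they are excluded.
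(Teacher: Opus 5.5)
The paper gives no proof of this statement. It quotes \cite[Theorem 1.1]{AZ2} and uses it as a black box in Step 2.2 of the appendix, so your proposal has to stand on its own.

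Your Steps 1 and 2 are sound, and together they give the lower bound. The count $\dim\mathcal{T}^0=2E-V-d+1$ is correct. The characters are independent because $A$ is non-local, so every non-truncated vertex carries at least one arrow that is not a deformed loop, and the arrows around distinct vertices are disjoint. The intersection $\mathcal{T}\cap\mathrm{Inn}(A)$ is exactly the vertex torus of rank $E-1$. Hence the image of $\mathcal{T}^0$ in $\mathrm{Out}^0(A)$ has rank $E-V-d+2$.

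Your Step 2 argument does not use the non-caterpillar hypothesis at all. An element of $\mathcal{T}$ is determined by its scalars on $\mathrm{rad}A/\mathrm{rad}^2A$, and there $f_u$ and $f_{u_0}$ agree. This works for every Brauer graph, so the hypothesis must enter in the upper bound.

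The gap is Step 3, which is the real content of the theorem. It is asserted rather than proved, and both of your justifications fail outside caterpillars.

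\emph{Double arrows.} ``Biseriality pins down each arrow up to a scalar modulo $\mathrm{rad}^2$'' fails whenever $Q_\Gamma$ has a double arrow, and this happens for non-caterpillars. Take vertices $x,y$ joined by edges $e,f$, with $f$ following $e$ in the cyclic order at both $x$ and $y$, and give $x$ a third edge to a new vertex. Then there are two arrows $e\to f$, so $e_f(\mathrm{rad}A/\mathrm{rad}^2A)e_e$ is $2$-dimensional. A torus acting on it by a single character selects no preferred basis; to separate the two arrows you must use which arrows they compose with nontrivially.

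\emph{Correction terms.} ``$S$-equivariance kills the correction terms'' fails because arrows can share weights with longer paths. Take the Brauer graph with edges $e=\{x,z\}$ and $f=\{z,w\}$, with $m(x)=m(z)=2$ and $m(w)=1$. The arrows are the loop $\alpha$ at $e$ around $x$, and $\beta\colon e\to f$, $\gamma\colon f\to e$ around $z$. The relation $\alpha^2=(\gamma\beta)^2$ forces $\lambda_\alpha=\lambda_\beta\lambda_\gamma$ on $\mathcal{T}^0$. So $\alpha$ and $\gamma\beta$ have the same weight, and for any subtorus $S\subseteq\mathcal{T}^0$ the element $\alpha+c\gamma\beta$ is a legitimate weight lift of $\bar\alpha$. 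Yet it violates $\alpha'\gamma=0$, since $\gamma\beta\gamma\neq0$.

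So the normal form is not automatic. You must prove that \emph{some} choice of weight lifts satisfies the standard relations. Equivalently, by conjugacy of maximal tori, you must prove that $\mathcal{T}^0$ is a maximal torus of $\mathrm{Aut}^0(A)$. Both examples are non-caterpillars, so your guess that weight coincidences occur exactly for caterpillars is also wrong.

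A workable route is to take a torus $S\supseteq\mathcal{T}^0$. It preserves the $\mathcal{T}^0$-weight spaces, and every element of $S$ must respect the zero relations $\alpha\beta=0$ for $\beta\neq\sigma(\alpha)$. Use these relations to kill the off-diagonal terms; in the second example, $\phi(\alpha)\phi(\gamma)=0$ forces the coefficient of $\gamma\beta$ to vanish. Carrying this out for all configurations is the missing case analysis.
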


\begin{Thm}\textnormal{(\cite[Theorem 4.4]{CLLX})}\label{thm:st.M-BGA}
 Let $A$ be a Brauer graph algebra. Then for any basic algebra $B$ without semisimple summands, if $B$ and $A$ are stably equivalent of Morita type, then $B$ is also a Brauer graph algebra.
\end{Thm}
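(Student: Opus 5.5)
We may assume $B$ is basic, without semisimple summands, and stably equivalent of Morita type to $A=A_\Gamma$. The plan has two parts. First reduce to $B$ being an indecomposable symmetric stably biserial algebra, described by Theorem~\ref{thm:sym-StBA}. Then show that the set $\mathcal{L}$ of deformed loops is empty, using the invariants collected in this appendix.

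\emph{Reduction.} By \cite[Proposition 2.1 and Corollary 2.3]{L}, $B$ is indecomposable and self-injective, since $A$ is. Stable equivalences of Morita type preserve symmetry \cite[Proposition 5.5.4]{Z}, so $B$ is symmetric. The two algebras have the same number of non-projective simples; as neither has projective simples (no semisimple summands), they have the same number of simples, i.e. $|E(\Gamma)|=|E(\Gamma')|$. If $A$ is the Nakayama algebra with $\rad^2(A)=0$, then $A\cong k[x]/(x^2)$, since a symmetric algebra of Loewy length $2$ has a single simple. In that case $B$ is local, and its stable category has a single indecomposable object fixed by $\Omega$; a direct check should give $B\cong k[x]/(x^2)$. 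Otherwise Theorem~\ref{thm:sta-to-BGA=StB} says $B$ is stably biserial. Then Theorem~\ref{thm:sym-StBA} presents $B$ by a Brauer graph $(\Gamma',m')$ together with a set $\mathcal{L}$ of deformed loops. If $\mathrm{char}(k)\neq 2$ we may take $\mathcal{L}=\varnothing$, and $B$ is a Brauer graph algebra, so we are done.

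\emph{Characteristic $2$.} It remains to show that $d=|\mathcal{L}|$ can be taken to be $0$. We collect the following invariants.
\begin{enumerate}
\item Deformation does not change the dimension vectors of the projectives, so $C(B)=C(A_{\Gamma'})$. Lemma~\ref{lem-invariant-factors} and Lemma~\ref{lem-Cartan-matrix} then give $\operatorname{rank}_{\mathbb{Q}}D=\operatorname{rank}_{\mathbb{Q}}D_{\Gamma'}$. This equals $|V|$ or $|V|-1$ according to whether the graph is non-bipartite or bipartite, and the invariant factors should distinguish these two cases (a factor $2$ appears in the non-bipartite case).
\item $\dim Z(A)=\dim Z(B)$ by \cite[Theorem 2]{AZ1} and \cite[Corollary 1.2]{LZZ}. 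By Remark~\ref{remark-center-of-SSBA} this reads
$$\textstyle\sum m_i-|V|+l=\sum m'_i-|V'|+l'.$$
\item Remark~\ref{remark-center-quotient} gives $r_i=r'_i$ for $i>2$ and $r_2+l=r'_2+l'$.
\item In the non-local, non-caterpillar case, Theorem~\ref{thm-maximal-tori} together with \cite{Rou} gives
$$|E|-|V|+2=|E'|-|V'|-d+2,\qquad\text{hence}\qquad |V'|=|V|-d.$$
\item Corollary~\ref{cor-stbA-tube-and-face} matches the faces of perimeter $>2$ in the representation-infinite case.
\end{enumerate}

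From (1), $|V'|\ge |V|-1$, so by (4) $d\le 1$. Suppose $d=1$. Then $\Gamma'$ has one vertex fewer, and the bipartiteness type must change to keep the Cartan rank. Feeding this into the face count via Euler's formula, together with (2), (3) and (5), I expect to reach a contradiction. This parity and Euler-characteristic bookkeeping is the main obstacle.

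If it does not close, I would fall back on the argument of Lemma~\ref{lem-not-st-eq}. A deformed loop $\alpha^2=t_\alpha C_\alpha^{m}$ changes how $\Omega$ acts on the modules at the mouths of the rank-one tubes attached to the corresponding perimeter-$1$ face (from $[\lambda:\mu]\mapsto[\mu:\lambda]$ to the identity, or conversely). This action is preserved by any stable equivalence commuting with $\Omega$.

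The remaining exceptional cases are handled separately: local algebras, caterpillars, and representation-finite algebras (Brauer trees, which are covered by \cite[Theorem 4.1]{CKL}). In these cases $\Gamma'$ has at most two vertices, and one can compare $\dim Z$, the Cartan matrix, and the domestic/non-domestic type of the stable AR-quiver as in Case~1.1 of Theorem~\ref{cor-stm-der-BGA}. This should rule out deformed loops, or else show that they yield an algebra isomorphic to a Brauer graph algebra.
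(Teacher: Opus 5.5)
Your reduction (to an indecomposable symmetric stably biserial $B$ given by $(\Gamma',m')$ and a set $\mathcal{L}$ of deformed loops) matches the paper, and so do your first characteristic-$2$ deductions: $d\le 1$, and if $d=1$ then $\Gamma$ is bipartite and $\Gamma'$ is not. One small correction: equality of the numbers of simple modules is the Auslander--Reiten conjecture, not a general fact; here it comes from \cite[Theorem 2]{AZ1}.

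The real problem is that you stop exactly where the work begins, and the invariants (1)--(5) provably cannot finish it. Take the following pair.
\begin{itemize}
\item $\Gamma$: two vertices of multiplicity $2$ joined by $n\ge 3$ edges embedded in the plane. It has $n$ faces of perimeter $2$ and is not a caterpillar.
\item $\Gamma'$: one vertex of multiplicity $1$ carrying $n$ nested loops. It has two faces of perimeter $1$ and $n-1$ of perimeter $2$. Deform one of the two perimeter-one loops.
\end{itemize}
For this pair, all your invariants agree:
\begin{itemize}
\item $C(A)=C(B)$ is the $n\times n$ matrix with all entries $4$, so all invariant factors agree.
\item $\dim Z(A)=\dim Z(B)=n+3$.
\item $Z(A)/R(A)\cong Z(B)/R(B)\cong k[X,Y]/(X,Y)^2$.
\item Both maximal tori have rank $n$.
\item Neither graph has a face of perimeter $>2$, and both have genus $0$.
\end{itemize}
So no parity or Euler-characteristic bookkeeping can yield a contradiction. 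The $n=2$ caterpillar reduces under your invariants to the same configuration with $n=2$.

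The local case has the same problem. A loop of multiplicity $m\ge 2$ and its deformation $\alpha^2=t(\beta\alpha)^m$ with $t\neq 0$ have the same Cartan matrix $(4m)$ and isomorphic centers, and both are non-domestic. So $\dim Z$, the Cartan matrix and domestic type do not separate them.

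Two ideas are missing.
\begin{enumerate}
\item Use the full invariant factors, not just the rank. The gcd of the $N\times N$ minors of the two Cartan matrices (from \cite{A09}), combined with $r_2=r_2'+l_1'$ and the parity relation $l_1+2l_2=l_1'+2l_2'$ (which forces $l_1'\ge 2$), pins $\Gamma,\Gamma'$ down to exactly the configuration above.
\item When the Cartan matrix vanishes over $k$, the projective center (the Higman ideal, of dimension $\operatorname{rank}_k C$) is zero. Proposition~\ref{prop:P} then upgrades $\dim Z(A)=\dim Z(B)$ to $Z(A)\cong Z(B)$. But $Z(A)/\mathrm{soc}(Z(A))\cong k[X,Y]/(X^2,XY,Y^2)$, whereas $Z(B)/\mathrm{soc}(Z(B))\cong k$. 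The same argument closes the $n=2$ caterpillar.
\end{enumerate}
In the local case even the centers coincide. The paper instead compares $\dim_k\mathrm{HH}^1$, which is $m+7$ for $A$ and $m+4$, $m+5$ or $m+6$ for $B$, and is invariant under stable equivalence of Morita type.

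Your fallback via the $\Omega$-action on rank-one tubes is not yet an argument. You would need an intrinsic way to single out the tubes ``attached'' to a perimeter-one face, and to count $\Omega$-fixed versus $\Omega$-exchanged homogeneous tubes on both sides. Nothing in the proposal supplies either.
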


\begin{proof} 
By \cite[Corollary 2.4 and Proposition 2.1]{L} and Theorem \ref{thm:sta-to-BGA=StB}, $B$ is indecomposable and symmetric stably biserial. If $\mathrm{char}(k)\neq 2$, then according to Theorem \ref{thm:sym-StBA}, $B$ is a Brauer graph algebra. Therefore without loss of generality, we may assume that the field characteristic is $2$. Assume that the Brauer graph associated with $A$ is $(\Gamma, m)$, and let $B$ be a symmetric stably biserial algebra defined by the Brauer graph $(\Gamma', m')$ and a set $\mathcal{L}$ of deformed loops. According to \cite[Theorem 2]{AZ1}, $A$ and $B$ have the same number of isomorphism classes of simple modules. It follows that the number of edges of $\Gamma$ and $\Gamma'$ are equal.

\medskip
{\it Step 1: Suppose that $A$ is local.}

We assume that $B$ is not a Brauer graph algebra. Then $\Gamma'$ is a loop. Suppose that the multiplicity of the unique vertex in $\Gamma'$ is $m'$. Then the Cartan matrix $C(B)$ of $B$ is
\[\begin{pmatrix}
4m'
\end{pmatrix}, \]
which is a zero matrix over $k$. By Lemma \ref{lem-invariant-factors}, the Cartan matrix $C(A)$ of $A$ is also
\[\begin{pmatrix}
4m'
\end{pmatrix}, \]
which is a zero matrix over $k$. Since $A$ and $B$ are symmetric, the dimension of the projective center $Z^{\mathrm{pr}}(A)$ (resp. $Z^{\mathrm{pr}}(B)$) of $A$ (resp. $B$) is equal to the rank of $C(A)$ (resp. $C(B)$) as a matrix over $k$. Then the projective centers $Z^{\mathrm{pr}}(A)$, $Z^{\mathrm{pr}}(B)$ are equal to zero, and by Proposition \ref{prop:P} the centers of $A$ and $B$ are isomorphic as $k$-algebras.

\medskip
{\it Step 1.1: Suppose that $m'=1$.}

Since $m'=1$, $B$ is commutative and $Z(B)=B$. If $\Gamma$ is a loop and the multiplicity of the unique vertex in $\Gamma$ is $m\geq 2$, then a calculation shows that
$$Z(A)\cong k[X_1,X_2,X_3,X_4]/(X_{1}^{m},X_{2}^{2},X_{3}^{2},X_{4}^{2},X_i X_j (i\neq j)).$$
Since $Z(A)$ is not symmetric and $Z(B)$ is symmetric, they are not isomorphic, a contradiction. Therefore either $\Gamma$ is a loop such that the multiplicity of the unique vertex in $\Gamma$ is $1$, or $\Gamma$ is an ordinary edge. In both cases $A$ is commutative, and therefore $A\cong Z(A)\cong Z(B)\cong B$. Then $B$ is a Brauer graph algebra, which contradicts our assumption.

\medskip
{\it Step 1.2: Suppose that $m'>1$.}

A calculation shows that
$$Z(B)\cong k[X_1,X_2,X_3,X_4]/(X_{1}^{m'},X_{2}^{2},X_{3}^{2},X_{4}^{2},X_i X_j (i\neq j)),$$
which is not symmetric. If $\Gamma$ is a loop such that the multiplicity of the unique vertex in $\Gamma$ is $1$, or if $\Gamma$ is an ordinary edge, then $A$ is commutative and $Z(A)$ is symmetric, which is not isomorphic to $Z(B)$. Then $\Gamma$ is a loop and the multiplicity of the unique vertex in $\Gamma$ is $m\geq 2$. Since
$$Z(A)\cong k[X_1,X_2,X_3,X_4]/(X_{1}^{m},X_{2}^{2},X_{3}^{2},X_{4}^{2},X_i X_j (i\neq j))$$
is isomorphic to $Z(B)$, $m=m'$. We have that
$$A\cong k\langle\alpha,\beta\rangle/\langle(\beta\alpha)^m-(\alpha\beta)^m,\alpha^2,\beta^2\rangle$$
and
$$B\cong k\langle\alpha,\beta\rangle/\langle(\beta\alpha)^m-(\alpha\beta)^m,\alpha^2-t(\beta\alpha)^m,\beta^2-t'(\alpha\beta)^m,
\alpha(\beta\alpha)^m,\beta(\alpha\beta)^m\rangle$$
with $t,t'\in k$. Since $B$ is not a Brauer graph algebra, we may assume that $t\neq 0$. A calculation shows that $\mathrm{dim}_{k}\mathrm{HH}^{1}(A)=m+7$ and
\begin{equation*}
\mathrm{dim}_{k}\mathrm{HH}^{1}(B)=\begin{cases}
m+4, \text{ if } t'\neq 0 \text{ and } m \text{ is odd}; \\
m+5, \text{ if } t'\neq 0 \text{ and } m \text{ is even, or if } t'=0 \text{ and } m \text{ is odd}; \\
m+6, \text{ if } t'=0 \text{ and } m \text{ is even}.
\end{cases}
\end{equation*}
For the method of computation, we refer the reader to~\cite{RSS,LX}. Thus $\mathrm{HH}^1(A)$ and $\mathrm{HH}^1(B)$ have different dimensions. Since positive-degree Hochschild cohomology is invariant under stable equivalences of Morita type between finite-dimensional self-injective algebras~\cite{P2001,LXi05}, the algebras $A$ and $B$ cannot be stably equivalent of Morita type, a contradiction.

\medskip
{\it Step 2: Suppose that $A$ is not local.}

\medskip
{\it Step 2.1: Suppose that $A$ is a caterpillar.}

Suppose that $\Gamma$ has $n(\geq 2)$ edges. If $\Gamma$ has exactly one vertex, then $\Gamma$ contains two faces, each has perimeter $n$ if $n$ is odd, and contains one face of perimeter $2n$ if $n$ is even. Then each face of $\Gamma$ has perimeter $>2$. By Corollary \ref{cor-stbA-tube-and-face}, $\Gamma$ and $\Gamma'$ have the same multi-set of perimeters of faces of perimeter larger than $2$. Moreover, since $\Gamma$ and $\Gamma'$ have the same number of edges, the sum of perimeters of all faces of $\Gamma$ and $\Gamma'$ are equal. Therefore $\Gamma'$ has no faces of perimeter $\leq 2$. Then $\mathcal{L}=\varnothing$ and $B$ is a Brauer graph algebra.

If $\Gamma$ has exactly two vertex, then $\Gamma$ contains two faces, each has perimeter $n$ if $n$ is even, and contains one face of perimeter $2n$ if $n$ is odd. When $n\neq 2$, each face of $\Gamma$ has perimeter $>2$, and similarly we have that $B$ is a Brauer graph algebra. When $n=2$, $\Gamma$ has two faces, each has perimeter $2$. If $B$ is not a Brauer graph algebra, then $\Gamma'$ is of the form
$$\begin{tikzpicture}
\draw (0,0) circle (0.5);
\fill (0.5,0) circle (0.5ex);
\draw (0.5,0) -- (1.5,0);
\fill (1.5,0) circle (0.5ex);
\end{tikzpicture}$$
or of the form
$$\begin{tikzpicture}
\draw (0,0) circle (0.5);
\draw (1,0) circle (0.5);
\fill (0.5,0) circle (0.5ex);
\end{tikzpicture}.$$
If $\Gamma'$ belongs to the first case, then it has a face of perimeter $3$, which contradicts the fact that $\Gamma$ and $\Gamma'$ have the same multi-set of perimeters of faces of perimeter larger than $2$. If $\Gamma'$ belongs to the second case, denote by $q_1$, $q_2$ the multiplicity of the two vertices of $\Gamma$ respectively and denote by $p$ the multiplicity of the unique vertex of $\Gamma'$. Then the Cartan matrix $C(A)$ of $A$ is \[ C(A)= \begin{pmatrix}
q_1+q_2 & q_1+q_2  \\
q_1+q_2 & q_1+q_2
\end{pmatrix} \]
and the Cartan matrix $C(B)$ of $B$ is \[ C(B)= \begin{pmatrix}
4p & 4p \\
4p & 4p
\end{pmatrix}. \]
By Lemma \ref{lem-invariant-factors}, we have $4p=q_1+q_2$. Since char$(k)=2$, both $C(A)$ and $C(B)$ are zero matrices over $k$, and the projective centers $Z^{\mathrm{pr}}(A)$, $Z^{\mathrm{pr}}(B)$ are equal to zero. By Proposition \ref{prop:P}, the centers of $A$ and $B$ are isomorphic as $k$-algebras. By Proposition \ref{remark-center-of-stBA} and Remark \ref{remark-center-of-SSBA},
$$Z(A)/(\mathrm{soc}(Z(A)))\cong k[X,Y]/(X^{q_1},Y^{q_2},XY)$$
and
\begin{equation*}
Z(B)/(\mathrm{soc}(Z(B)))\cong
\begin{cases}
k[X]/(X^{p-1}), \text{ if } p\geq 2; \\
k, \text{ if } p=1.
\end{cases}
\end{equation*}
Since $\mathrm{dim}_{k}(Z(A)/(\mathrm{soc}(Z(A))))=q_1+q_2-1=4p-1$ and
\begin{equation*}
\mathrm{dim}_{k}(Z(B)/(\mathrm{soc}(Z(B))))=
\begin{cases}
p-1, \text{ if } p\geq 2; \\
1, \text{ if } p=1,
\end{cases}
\end{equation*}
$Z(A)/(\mathrm{soc}(Z(A)))$ and $Z(B)/(\mathrm{soc}(Z(B)))$ have different dimensions, which are not isomorphic, a contradiction.

\medskip
{\it Step 2.2: Suppose that $A$ is not a caterpillar.}

Denote by $E(\Gamma)$ (resp. $E(\Gamma')$) the number of edges in $\Gamma$ (resp. $\Gamma'$), $V(\Gamma)$ (resp. $V(\Gamma')$) the number of vertices in $\Gamma$ (resp. $\Gamma'$), and denote by $d$ the cardinality of $\mathcal{L}$. Suppose that $B$ is not a Brauer graph algebra. Then $d>0$ and $\Gamma'$ is not bipartite. According to Theorem \ref{thm-maximal-tori}, the rank of the maximal torus of $\mathrm{Out}^{0}(A)$ (resp. $\mathrm{Out}^{0}(B)$) is $E(\Gamma)-V(\Gamma)+2$ (resp. $E(\Gamma')-V(\Gamma')-d+2$). Since $A$ and $B$ are stably equivalent of Morita type, $\mathrm{Out}^{0}(A)$ and $\mathrm{Out}^{0}(B)$ are isomorphic as algebraic groups, and therefore the ranks of the maximal tori of $\mathrm{Out}^{0}(A)$ and $\mathrm{Out}^{0}(B)$ are equal. Since $E(\Gamma)=E(\Gamma')$, we have that $V(\Gamma)=V(\Gamma')+d$. By \cite[Proposition 3.1]{A}, the rank of the Cartan matrix $C(B)$ of $B$ is $V(\Gamma')$, and the rank of the Cartan matrix $C(A)$ of $A$ is equal to $V(\Gamma)$ (if $\Gamma$ is not bipartite) or $V(\Gamma)-1$ (if $\Gamma$ is bipartite). Since $A$ and $B$ have the same number of isomorphism classes of simple modules, by Lemma \ref{lem-invariant-factors}, $C(A)$ and $C(B)$ have the same rank. Then $V(\Gamma')+d=V(\Gamma)\leq \mathrm{rank}(C(A))+1=\mathrm{rank}(C(B))+1=V(\Gamma')+1$. Therefore $d=1$ and $V(\Gamma)=\mathrm{rank}(C(A))+1$, which implies that $\Gamma$ is bipartite. In particular, $\Gamma$ has no faces of perimeter $1$.
   
Denote by $l_i$ (resp. $l'_i$) the number of faces of $\Gamma$ (resp. $\Gamma'$) of perimeter $i$, $r_i$ (resp. $r'_i$) the number of vertices of $\Gamma$ (resp. $\Gamma'$) of multiplicity $i$. By Remark \ref{remark-center-quotient}, we have $r_i=r'_i$ for $i>2$ and $r_2=r'_2+l'_1$. According to Corollary \ref{cor-stbA-tube-and-face}, $\Gamma$ and $\Gamma'$ have the same multi-set of perimeters of faces of perimeter larger than $2$. Moreover, since the sum of perimeters of all faces of $\Gamma$ and $\Gamma'$ are equal, we imply that $l_1+2l_2=l'_1+2l'_2$. Since $l_1=0$ and $l'_1\neq 0$, we have $l'_1\geq 2$.

Let $N=V(\Gamma')$, $m_1,m_2,\cdots,m_{N+1}$ (resp. $m'_1,m'_2,\cdots,m'_N$) be the multiplicities of all vertices of $\Gamma$ (resp. $\Gamma'$). According to \cite[Corollary 4.5]{A09}, the greatest common divisor of all $N\times N$ minors of $C(A)$ is $m_1 m_2 \cdots m_{N+1}(\frac{1}{m_1}+\frac{1}{m_2}+\cdots\frac{1}{m_{N+1}})$. Moreover, according to \cite[Corollaries 2.6 and 5.7]{A09}, the greatest common divisor of all $N\times N$ minors of $C(B)$ is $4m'_1 m'_2 \cdots m'_N$. Since
$$m_1 m_2 \cdots m_{N+1}=\prod_{i\geq 1}i^{r_i}=2^{l'_1}\prod_{i\geq 1}i^{r'_i}=2^{l'_1}m'_1 m'_2 \cdots m'_N,$$
and since $C(A)$ and $C(B)$ have the same determinantal divisors, we have
$$2^{l'_1}(\frac{1}{m_1}+\frac{1}{m_2}+\cdots\frac{1}{m_{N+1}})=4.$$
Since the number of vertices in $\Gamma$ of multiplicity $2$ is $r_2=r'_2+l'_1\geq 2$, $$\frac{1}{m_1}+\frac{1}{m_2}+\cdots\frac{1}{m_{N+1}}=\sum_{i\geq 1}\frac{r_i}{i}\geq \frac{r_2}{2}\geq 1.$$ Then $2^{l'_1}\leq 4$ and $l'_1\leq 2$. Note that we also have $l'_1\geq 2$, which implies that $l'_1=2$ and $\frac{1}{m_1}+\frac{1}{m_2}+\cdots\frac{1}{m_{N+1}}=1$. Since $$\frac{1}{m_1}+\frac{1}{m_2}+\cdots\frac{1}{m_{N+1}}=\sum_{i\geq 1}\frac{r_i}{i}\geq \frac{r_2}{2}=\frac{r'_2+l'_1}{2}=\frac{r'_2+2}{2},$$
we have $r'_2=0$, $r_2=r'_2+l'_1=2$, and $r_i=0$ for $i\neq 2$. So $V(\Gamma)=2$ and $V(\Gamma')=V(\Gamma)-1=1$. Since $r'_2=0$ and $r'_i=r_i=0$ for $i>2$, $\Gamma'$ has no vertices of multiplicity $\geq 2$, and the unique vertex of $\Gamma'$ has multiplicity $1$.

Suppose that $E(\Gamma)=E(\Gamma')=n$. Then $\Gamma'$ is a Brauer graph with one vertex and $n$ loops, where the multiplicity of the unique vertex is $1$. Since $\Gamma$ is bipartite, it contains no loops. Then $\Gamma$ is a Brauer graph with two vertices $v_1$, $v_2$, each has multiplicity $2$, together with $n$ edges connecting $v_1$ and $v_2$. A calculation shows that both $C(A)$ and $C(B)$ are $n\times n$ matrices with all entries equal to $4$. Then both $C(A)$ and $C(B)$ are zero matrices over $k$, and the projective centers $Z^{\mathrm{pr}}(A)$, $Z^{\mathrm{pr}}(B)$ are equal to zero. By Proposition \ref{prop:P} the centers of $A$ and $B$ are isomorphic as $k$-algebras. By Remark \ref{remark-center-of-stBA} and Remark \ref{remark-center-of-SSBA}, $Z(A)/(\mathrm{soc}(Z(A)))\cong k[X,Y]/(X^{2},Y^{2},XY)$ and $Z(B)/(\mathrm{soc}(Z(B)))\cong k$, which are not isomorphic, a contradiction.
\end{proof}

\end{document}